\theoremstyle{plain}
\newtheorem{theorem}{\bf Theorem}[section]
\newtheorem{lemma}{\bf Lemma}[section]
\newtheorem{definition}{\bf Definition}[section]
\newtheorem{proposition}{\bf Proposition}[section]
\numberwithin{equation}{section}
\title{On a nonlinear eigenvalue problem for generalized Laplacian in Orlicz-Sobolev spaces}
\author[A. Youssfi]{Ahmed Youssfi}
\address{University Sidi Mohamed Ben Abdellah,
	National School of Applied Sciences,
	P.O. Box 72 F\`es-Pricipale, Fez, Morocco}
\email{address:ahmed.youssfi@gmail.com ; ahmed.youssfi@usmba.ac.ma}
\author[M. M. Ould Khatri]{Mohamed Mahmoud Ould Khatri}
\address{University Sidi Mohamed Ben Abdellah,
	National School of Applied Sciences,
	P.O. Box 72 F\`es-Pricipale, Fez, Morocco}
\email{mahmoud.ouldkhatri@usmba.ac.ma}
\date\today
\begin{document}
\maketitle

\begin{abstract}
We consider a nonlinear eigenvalue problem for some elliptic equations governed by general operators including the $p$-Laplacian. The natural framework in which we consider such equations is that of Orlicz-Sobolev spaces. we exhibit two positive constants $\lambda_{0}$ and $\lambda_{1}$ with $\lambda_{0}\leq\lambda_{1}$ such that $\lambda_1$ is an eigenvalue of the problem while any value $\lambda<\lambda_{0}$ cannot be so. By means of Harnack-type inequalities and a strong maximum principle, we prove the isolation of $\lambda_{1}$ on the right side. We emphasize that throughout the paper no $\Delta_2$-condition is needed.
\end{abstract}


{\small {\bf Key words and phrases:}  Orlicz-Sobolev spaces; Nonlinear eigenvalue problems; Harnack inequality.}

{\small{\bf Mathematics Subject Classification (2010)}: 46E30, 35P30, 35D30. }\\ 
\section{Introduction}
\par Let $\Omega$ be an open bounded subset in $\mathbb{R}^{N}$, $N\geq2$, having the segment property. In this paper we investigate the existence and the isolation of an eigenvalue for the following weighted Dirichlet problem
\begin{equation}\label{I}
\left\{
\begin{array}{rll}
-\mbox{div}(\phi(|\nabla u|)\nabla u)&=\lambda \rho(x)\phi(|u|)u&\mbox{ in } \Omega,\\
u&=0&\mbox{ on } \partial\Omega,
\end{array}%
\right.
\end{equation}
where $\phi:(0,\infty)\to(0,\infty)$ is a continuous function, so that defining the function $m(t)=\phi(|t|)t$ we suppose that $m$ is strictly increasing and satisfies $m(t)\to0$ as $t\to0$ and  $m(t)\to\infty$ as $t\to\infty$. The weight function $\rho\in L^{\infty}(\Omega)$ is such that $\rho\geq0$ a.e. in $\Omega$ and $\rho\neq0$ in $\Omega$.
\par If $\phi(t)=|t|^{p-2}$ with $1<p<+\infty$ the problem \eqref{I} is reduced to the eigenvalue problem for the $p$-Laplacian
\begin{equation}\label{I1}
\left\{
\begin{array}{lcl}
-\mbox{div}(|\nabla u|^{p-2}\nabla u)=\lambda \rho(x)|u|^{p-2}u&\mbox{ in }& \Omega,\\
u=0&\mbox{ on }& \partial\Omega,
\end{array}
\right.
\end{equation}
while for $p=2$ and $\rho=1$  it is reduced to the classical eigenvalue problem for the Laplacian
\begin{equation}\label{I2}
\left\{
\begin{array}{lcl}
-\triangle u=\lambda u&\mbox{ in }& \Omega,\\
u=0&\mbox{ on }& \partial\Omega.
\end{array}
\right.
\end{equation}
It is known that the problem \eqref{I2} has a sequence of eigenvalues $0<\lambda_{1}<\lambda_{2}\leq\lambda_{3}\leq\cdots$ such that $\lambda_{n}\to\infty$ as $n\to\infty$. Moreover, the eigenvalues of the problem \eqref{I2} have multiplicities and the first one is simple. Anane \cite{Anane} proved the existence, simplicity and isolation of the first
eigenvalue $\lambda_{1}>0$ of the problem \eqref{I1} assuming some regularity on the boundary $\partial\Omega$.
The simplicity of the first eigenvalue of the problem \eqref{I1} with $\rho=1$ was proved later by Lindqvist \cite{Lindqvist} without any regularity on the domain $\Omega$. For more results on the first eigenvalue of the $p$-Laplacian we refer for example to \cite{Otani,Sakaguchi}.
\par In the general setting  of Orlicz-Sobolev spaces, the following eigenvalue problem
\begin{equation}\label{I3}
\left\{
\begin{array}{lcl}
-\mbox{div}(A(|\nabla u|^{2})\nabla u)=\lambda\psi(u),
&\mbox{ in }& \Omega,\\
u=0&\mbox{ on }& \partial\Omega,
\end{array}%
\right.
\end{equation}
was studied in \cite{Garcia} in the Orlicz-Sobolev space $W^{1}_{0}L_{\Phi}(\Omega)$ where
$\Phi(s)=\displaystyle\int_{0}^{s}A(|t|^{2})t dt$ and $\psi$ is an odd increasing homeomorphism of $\mathbb{R}$ onto $\mathbb{R}$.
In \cite{Garcia} the authors proved the existence of a minimum of the functional $u\to\displaystyle\int_{\Omega}\Phi(|\nabla u|)dx$
which is subject to a constraint and they proved the existence of principal eigenvalues of the problem \eqref{I3} by using
a non-smooth version of the Ljusternik theorem and
by assuming the $\Delta_{2}$-condition on the N-function $\Phi$ and it's complementary $\overline{\Phi}$. Mustonen and Tienari \cite{Mustonen} studied the eigenvalue problem
\begin{equation}\label{I4}
\left\{
\begin{array}{lcl}
-\mbox{div}\Big(\frac{m(|\nabla u|)}{|\nabla u|}\nabla u\Big)=\lambda\rho(x)\frac{m(|u|)}{|u|}u,
&\mbox{ in }& \Omega,\\
u=0&\mbox{ on }& \partial\Omega,
\end{array}
\right.
\end{equation}
in the Orlicz-Sobolev space $W^{1}_{0}L_{M}(\Omega)$, where
$M(s)=\displaystyle\int_{0}^{s}m(t)dt$ with $m(t)=\phi(|t|)t$ and $\rho=1$, without assuming the $\Delta_2$-condition neither on $M$ nor on its conjugate N-function $\overline{M}$. Consequently, the functional $u\to\displaystyle\int_{\Omega}M(|\nabla u|)dx$ is not necessarily continuously differentiable and so classical variational methods can not be applied.
They prove the existence of eigenvalues $\lambda_{r}$ of problem \eqref{I4} with $\rho=1$ and for every $r>0$, by proving the existence of a minimum
of the real valued functional $\displaystyle\int_{\Omega}M(|\nabla u|)dx$ under the constraint $\displaystyle\int_{\Omega}M(u)dx=r$. By the implicit function theorem they proved that every solution of such minimization problem is a weak solution of the problem \eqref{I4}. This result was then extended in \cite{Manasevich} to \eqref{I4} with $\rho\neq1$ and without assuming the $\Delta_{2}$-condition by using a different approach based on a generalized version of Lagrange multiplier rule. The problem \eqref {I} was studied in \cite{Lorca} under the restriction that both the corresponding $N$-function and its complementary function satisfy the $\Delta_{2}$-condition. In reflexive Orlicz-Sobolev spaces, other results related to this topic can be found in \cite{MRR,MR}.
\par In the present paper we define
\begin{equation}\label{lambda0}
\lambda_{0}=\inf_{u\in W^{1}_{0}L_{M}(\Omega)\setminus\{0\}}
\frac{\int_{\Omega}\phi(|\nabla u|)|\nabla u|^{2}dx}{\int_{\Omega}\rho(x)\phi(|u|)|u|^{2}dx}
\end{equation}
and
\begin{equation}\label{lambda1}
\lambda_{1}=\inf\Big\{\int_{\Omega}M(|\nabla u|)dx\;\Big|\; u\in W^{1}_{0}L_{M}(\Omega),\;\int_{\Omega}\rho(x)M(|u|)dx=1\Big\}.
\end{equation}
\par In the particular case where $\phi(t)=|t|^{p-2}$, $1<p<+\infty$, we obtain $\lambda_0=\lambda_1$ and so $\lambda_0=\lambda_1$ is
the first isolated and simple eigenvalue of the problem \eqref{I1} (see \cite{Anane}).
\par However, in the non reflexive Orlicz-Sobolev structure the situation is more complicated since we can not expect that $\lambda_0=\lambda_1$.
Precisely, we can not assert whether $\lambda_0=\lambda_1$ or $\lambda_0<\lambda_1$. We think that this is an open problem and we expect that the answer strongly depends on the $N$-function $M$. If $\lambda_0<\lambda_1$, another open problem is to seek whether $\lambda_1$ is the smallest eigenvalue of problem (1). In other words to investigate the existence of eigenvalues of problem (1) in the interval $[\lambda_0,\lambda_1)$. Nonetheless, we show that $\lambda_0\leq\lambda_{1}$ and that any value $\lambda<\lambda_{0}$ can not be an eigenvalue of the problem \eqref{I}. Following the lines of \cite{Manasevich}, we also show that $\lambda_{1}$ is an eigenvalue of problem \eqref{I} associated to an eigenfunction $u$ which is a weak solution of \eqref{I} (see Definition \ref{def1} below).
It is in our purpose in this paper to prove that $\lambda_{1}$ is isolated from the right-hand side. To do so, we first prove some Harnack-type inequalities that enable us to show that $u$ is H\"older continuous and then by a strong maximum principle we show that $u$ has a constant sign. Besides, we prove that any eigenfunction associated to another eigenvalue than $\lambda_{1}$ necessarily changes its sign. This allows us to prove that $\lambda_1$ is isolated from the right hand side.
\par Let $\Omega$ be an open subset in $\mathbb{R}^N$ and let $M(t)=\int_{0}^{|t|}m(s)ds$, $m(t)=\phi(|t|)t$. The natural framework in which we consider the problem \eqref{I} is the Orlicz-Sobolev space defined by
$$
W^{1}L_{M}(\Omega)=\Big\{u\in L_{M}(\Omega):\partial_{i}u:=\frac{\partial u}{\partial x_{i}}\in L_{M}(\Omega),
i=1,\cdots,N\Big\}.
$$
where $L_{M}(\Omega)$ stands for the Orlicz space defined as follows
$$
L_{M}(\Omega)=\Big\{u:\Omega\rightarrow\mathbb{R}\mbox{ measurable }:
\displaystyle\int_{\Omega}M\Big(\frac{|u(x)|}{\lambda}\Big)dx<\infty\mbox{ for some }\lambda>0\Big\}.
$$
The spaces $L_{M}(\Omega)$ and $W^{1}L_{M}(\Omega)$ are Banach spaces under their respective norms
$$
\|u\|_{M}=\inf\Big\{\lambda>0:\int_{\Omega}M\Big(\frac{|u(x)|}{\lambda}\Big)dx\leq1\Big\}\mbox{ and }
\|u\|_{1,M}=\|u\|_{M}+\|\nabla u\|_{M}.
$$
The closure in $L_{M}$ of the set of bounded measurable functions with compact support in $\overline{\Omega}$ is denoted by $E_{M}(\Omega)$. The complementary function $\overline{M}$ of the $N$-function $M$ is defined by
$$
\overline{M}(x,s)=\sup_{t\geq0}\{st-M(x,t)\}.
$$
Observe that by the convexity of $M$ follows the inequality
\begin{equation}\label{prop.b}
\|u\|_{M}\leq\int_{\Omega}M(|u(x)|)dx+1\mbox{ for all }u\in L_{M}(\Omega).
\end{equation}
Denote by $W^{1}_{0}L_{M}(\Omega)$ the closure of $C_{0}^{\infty}(\Omega)$ in $W^{1}L_{M}(\Omega)$ with respect to the
weak* topology $\sigma(\Pi L_{M},\Pi E_{\overline{M}})$. It is known that if $\Omega$ has the segment property, then the four spaces
$$
(W^{1}_{0}L_{M}(\Omega),W^{1}_{0}E_{M}(\Omega);W^{-1}L_{\overline{M}}(\Omega),W^{-1}E_{\overline{M}}(\Omega))
$$
form a complementary system (see \cite{Gossez}).
If $\Omega$ is bounded in $\mathbb{R}^N$ then by the Poincar\'e inequality \cite[lemma 5.7]{Gossez}, $\|u\|_{1,M}$ and $\|\nabla u\|_{M}$ are
equivalent norms in $W^{1}_{0}L_{M}(\Omega)$.
\par Let $J:D(J)\to\mathbb{R}\cup\{+\infty\}$ and
$B:W^{1}_{0}L_{M}(\Omega)\to\mathbb{R}$ are the two functionals defined by
\begin{equation}\label{J}
J(u)=\int_{\Omega}M(|\nabla u|)dx
\end{equation}
and
\begin{equation}\label{B}
B(u)=\int_{\Omega}\rho(x)M(|u|)dx,
\end{equation}
respectively. The functional $J$ takes values in $\mathbb{R}\cup\{+\infty\}$. Since $W^{1}_{0}L_{M}(\Omega)\subset E_{M}(\Omega)$ (see \cite{Manasevich}), then the functional $B$ is real valued on $W^{1}_{0}L_{M}(\Omega)$.
Set
$$
K=\{u\in W^{1}_{0}L_{M}(\Omega):B(u)=1\}.
$$
In general, the functional $J$ is not finite nor of class $\mathcal{C}^{1}$ (see \cite{Mustonen} p. 158).
\section{Main results}
We will show that $\lambda_1$ given by relation \eqref{lambda1} is an eigenvalue of the problem \eqref{I} and isolated from the right hand side, while any $\lambda<\lambda_0$ is not an eigenvalue
of \eqref{I}. In the sequel we assume that $\Omega$ is a bounded domain (unless otherwise stated) in $\mathbb{R}^{N}$ having the segment property. 
\begin{definition}\label{def1}
	A function $u$ is said to be a weak solution of \eqref{I} associated with
	$\lambda\in\mathbb{R}$ if
	\begin{equation}\label{4}
	\begin{cases}
	u\in W^{1}_{0}L_{M}(\Omega), m(|\nabla u|)\in L_{\overline{M}}(\Omega)\\
	\int_{\Omega}\phi(|\nabla u|)\nabla u\cdot\nabla\psi dx=
	\lambda\int_{\Omega}\rho(x)\phi(|u|)u\psi dx, \mbox{ for all } \psi\in W^{1}_{0}L_{M}(\Omega)
	\end{cases}
	\end{equation}
\end{definition}
In this definition, both of the two integrals in (\ref{4}) make sense. Indeed, for all $u\in W^{1}_{0}L_{M}(\Omega)$
since $m(|\nabla u|)=\phi(|\nabla u|)|\nabla u|\in L_{\overline{M}}(\Omega)$, the first term is well defined. From the Young inequality and the integral representation of $M$, we easily get $\overline{M}(m(u))\leq um(u)\leq M(2u)$. So that since $u\in E_{M}(\Omega)$ the integral on the right-hand side also makes sense.
\begin{definition}
	We said that $\lambda$ is an eigenvalue of the problem (\ref{I}), if there exists a function $v\neq0$ belonging to $W^{1}_{0}L_{M}(\Omega)$ such that $(\lambda,v)$ satisfy (\ref{4}). The function $v$ will be called an eigenfunction associated with the eigenvalue $\lambda$.
\end{definition}
\subsection{Existence result}
\par We start with the next result that can be found in \cite[Lemma 3.2]{Mustonen}. For the convenience of the reader we give here a slightly different proof.
\begin{lemma}\label{lemma1}
	Let $J$ and $B$ be defined by \eqref{J} and \eqref{B}. Then \\
	(i) $B$ is $\sigma(\Pi L_{M},\Pi E_{\overline{M}})$ continuous,\\
	(ii) $J$ is $\sigma(\Pi L_{M},\Pi E_{\overline{M}})$ lower semi-continuous.
\end{lemma}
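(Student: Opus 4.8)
The plan is to prove (i) and (ii) by combining the variational characterization of the relevant modular integrals with the standard duality between $L_M$ and $L_{\overline M}$, exploiting that weak$^*$ convergence in $\Pi L_M$ against test functions in $\Pi E_{\overline M}$ controls exactly the integrals that appear in $B$ and in $J$. Throughout I will use the key integral inequality already recorded after Definition \ref{def1}, namely $\overline M(m(t))\le t\,m(t)\le M(2t)$, together with the fact that $W^1_0L_M(\Omega)\subset E_M(\Omega)$, so that $\rho\,m(|u|)\in L_{\overline M}(\Omega)$ whenever $u\in W^1_0L_M(\Omega)$ (using $\rho\in L^\infty(\Omega)$).

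For (i): suppose $u_k\rightharpoonup u$ in the sense $\sigma(\Pi L_M,\Pi E_{\overline M})$, i.e. $u_k\to u$ and $\nabla u_k\to\nabla u$ weak$^*$ in $L_M(\Omega)$ tested against $E_{\overline M}(\Omega)$. First I would note that $\{u_k\}$ is bounded in $L_M(\Omega)$ (weak$^*$ convergent sequences are norm-bounded by uniform boundedness), and since the embedding $W^1_0L_M(\Omega)\hookrightarrow E_M(\Omega)$ is compact (this is the compactness of the Orlicz--Sobolev embedding used in \cite{Manasevich,Mustonen}), we can pass to a subsequence with $u_k\to u$ strongly in $E_M(\Omega)$ and a.e. in $\Omega$. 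Then I would use the continuity of the Nemytskii/modular map: since $u_k\to u$ in $E_M(\Omega)$ and $\rho\in L^\infty$, one has $\int_\Omega \rho(x)M(|u_k|)\,dx\to\int_\Omega\rho(x)M(|u|)\,dx$; this is essentially the norm-continuity of $u\mapsto M(|u|)$ from $E_M$ into $L^1$, which can be shown by writing $M(|u_k|)-M(|u|)=\int_{|u|}^{|u_k|}m(s)\,ds$, dominating by $m(|u_k|+|u|)(|u_k|-|u|)$, applying Young's inequality $\overline M(m(|u_k|+|u|))\le M(2(|u_k|+|u|))$, and invoking the equi-integrability that strong $E_M$-convergence provides. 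A subsequence argument then upgrades convergence of the full sequence, proving $\sigma(\Pi L_M,\Pi E_{\overline M})$ continuity of $B$.

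For (ii): this is the lower semicontinuity of the convex modular $J(u)=\int_\Omega M(|\nabla u|)\,dx$ under weak$^*$ convergence of gradients. I would argue by convexity and duality. For any vector field $\Psi\in (E_{\overline M}(\Omega))^N$ one has, by the Fenchel--Young inequality for the $N$-function $M$ applied to $|\nabla u|$ and $|\Psi|$,
\[
\int_\Omega M(|\nabla u|)\,dx \ \ge\ \int_\Omega \nabla u\cdot\Psi\,dx - \int_\Omega \overline M(|\Psi|)\,dx,
\]
and the right-hand side is $\sigma(\Pi L_M,\Pi E_{\overline M})$-continuous in $u$ because $\Psi\in (E_{\overline M})^N$ is an admissible test field. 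Hence $J$ is the supremum over such $\Psi$ of $\sigma(\Pi L_M,\Pi E_{\overline M})$-continuous affine functionals, and a supremum of continuous functions is lower semicontinuous. The one point requiring care is that this supremum actually equals $J(u)$ (not merely a lower bound): this is the statement that $M$ can be recovered from $\overline M$ by a second conjugation and that the sup can be realized by $\Psi$ in the *small* Orlicz space $E_{\overline M}$ rather than all of $L_{\overline M}$; this follows by truncation, approximating the optimal choice $\Psi = m(|\nabla u|)\frac{\nabla u}{|\nabla u|}$ by its truncations, which lie in $E_{\overline M}$, and using $\overline M(m(t))\le M(2t)$ to control the error.

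The main obstacle I anticipate is precisely this last technical point in (ii) — ensuring the dual formula $J(u)=\sup\{\int\nabla u\cdot\Psi - \int\overline M(|\Psi|)\ :\ \Psi\in(E_{\overline M})^N\}$ holds when no $\Delta_2$-condition is available, so that the attaining field $m(|\nabla u|)\nabla u/|\nabla u|$ may fail to belong to $E_{\overline M}$ and must be reached by truncation — and, in (i), justifying the strong $E_M$-convergence of (a subsequence of) $u_k$ from the $\sigma(\Pi L_M,\Pi E_{\overline M})$-convergence, i.e. invoking the compact Orlicz--Sobolev embedding in the non-$\Delta_2$ setting. Both are standard in the Gossez--Mustonen circle of ideas but need to be cited carefully; everything else is convexity plus Young's inequality.
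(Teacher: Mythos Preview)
Your proof of (i) follows the same arc as the paper's: use the compact embedding $W^1_0L_M(\Omega)\hookrightarrow E_M(\Omega)$ to upgrade $\sigma(\Pi L_M,\Pi E_{\overline M})$-convergence to strong $E_M$-convergence along a subsequence, then deduce $\rho M(|u_k|)\to\rho M(|u|)$ in $L^1$. The only cosmetic difference is that the paper extracts an explicit $L^1$ majorant (from $M(2(u_k-u))\to 0$ in $L^1$, one gets $M(u_k)\le\tfrac12 M(2u)+\tfrac12 h$ with $h\in L^1$) and finishes by dominated convergence, whereas you sketch a mean-value/Young estimate on $M(|u_k|)-M(|u|)$; both are fine.

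For (ii) your route genuinely differs from the paper's. The paper observes that $L^\infty(\Omega)\subset E_{\overline M}(\Omega)$, so $\sigma(\Pi L_M,\Pi E_{\overline M})$-convergence implies $\nabla u_k\rightharpoonup\nabla u$ weakly in $L^1$ (i.e.\ for $\sigma(L^1,L^\infty)$), and then simply invokes the classical lower-semicontinuity result for convex integrands under weak $L^1$ convergence (Ekeland--Temam, \cite[Theorem 2.1, Chap.~8]{Ekeland}). Your approach instead realizes $J$ as a supremum of $\sigma(\Pi L_M,\Pi E_{\overline M})$-continuous affine functionals via Fenchel--Young, and you correctly isolate the one nontrivial step: showing the supremum over $\Psi\in(E_{\overline M})^N$ actually recovers $J(u)$, which you handle by truncating the formal maximizer $m(|\nabla u|)\nabla u/|\nabla u|$. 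This is a legitimate, more self-contained argument that avoids citing the Ekeland--Temam theorem, at the cost of the truncation bookkeeping; the paper's argument is shorter because it offloads exactly that convex-analysis content to the cited reference. Both approaches are standard in this circle of ideas and neither requires any $\Delta_2$-condition.
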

\begin{proof}
	(i) Let $u_{n}\to u$ for $\sigma(\Pi L_{M},\Pi E_{\overline{M}})$ in $W^{1}_{0}L_{M}(\Omega)$.
	By the compact embedding $W^{1}_{0}L_{M}(\Omega)\hookrightarrow
	E_{M}(\Omega)$, $u_{n}\to u$ in
	$E_{M}(\Omega)$ in norm. Hence $M(2(u_{n}-u))\to 0$ in $L^{1}(\Omega)$.
	By the dominated convergence theorem, there exists a subsequence of $\{u_{n}\}$ still denoted
	by $\{u_{n}\}$ with
	$u_{n}\to u$ a.e. in $\Omega$ and there exists $h\in L^{1}(\Omega)$ such that
	$$
	M(2(u_{n}-u))\leq h(x)\mbox{ a.e. in }\Omega
	$$
	for a subsequence. Therefore,
	$$
	|u_{n}|\leq |u|+\frac{1}{2}M^{-1}(h),
	$$
	so
	$$
	M(u_{n})\leq \frac{1}{2}M(2u)+\frac{1}{2}h(x)
	$$
	and since $\rho\geq0$ for a.e. in $\Omega$, then
	$$
	\rho(x)M(u_{n})\leq \frac{1}{2}\rho(x)M(2u)+\frac{1}{2}\rho(x)h(x)\in L^{1}(\Omega).
	$$
	Thus, the assertion (i) follows from the dominated convergence theorem.
	\par To show (ii) we assume that $u_{n}\to u$ for $\sigma(\Pi L_{M},\Pi E_{\overline{M}})$
	in $W^{1}_{0}L_{M}(\Omega)$, that is
	$$
	\int_{\Omega}u_{n}v dx\to\displaystyle\int_{\Omega}uv dx\mbox{ and }\displaystyle\int_{\Omega}\partial_{i}u_{n}v dx
	\to\int_{\Omega}\partial_{i}uv dx,
	$$
	for all $v\in E_{\overline{M}}$. This holds, in particular, for all $v\in L^{\infty}(\Omega)$. Hence,
	\begin{equation}\label{faibl.cv1}
	\partial_{i}u_{n}\to \partial_{i}u\mbox{ and }u_{n}\to u
	\mbox{ in }L^{1}(\Omega)\mbox{ for }\sigma(L^{1},L^{\infty}).
	\end{equation}
	Since the embedding $W^{1}_{0}L_{M}(\Omega)\hookrightarrow L^{1}(\Omega)$ is compact,
	then $\{u_{n}\}$ is relatively compact in $L^{1}(\Omega)$. By passing to a subsequence, $u_{n}\to v$ strongly in $L^{1}(\Omega).$
	In view of \eqref{faibl.cv1}, $v=u$ and $u_{n}\to u$ strongly in $L^{1}(\Omega)$.
	Passing once more to a subsequence, we obtain that $u_{n}\to u$ almost everywhere on $\Omega$. Since $\zeta\mapsto M(|\zeta|)$
	is convex for $\zeta\in\mathbb{R}^{N}$, we can use \cite[Theorem 2.1, Chapter 8]{Ekeland}, to obtain
	$$
	J(u)=\int_{\Omega}M(|\nabla u|)dx\leq\lim\inf\int_{\Omega}M(|\nabla u_{n}|)dx=
	\lim\inf J(u_{n}).
	$$
\end{proof}
The first result of this paper is given by the following theorem.
\begin{theorem}\label{thm1}
	The infimum in \eqref{lambda1} is
	achieved at some function $u\in K$ which is a weak solution of \eqref{I} and thus $u$ is an eigenfunction associated to the eigenvalue $\lambda_1$.
	Furthermore, $\lambda_0\leq\lambda_1$ and each
	$\lambda<\lambda_0$ is not an eigenvalue of problem \eqref{I}.
\end{theorem}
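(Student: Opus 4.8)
The plan is to establish the statement in four steps: that $\lambda_1$ is finite and positive (and $K\neq\emptyset$); that the infimum \eqref{lambda1} is attained; that a minimizer is a weak solution, hence an eigenfunction for $\lambda_1$; and that $\lambda_0\le\lambda_1$ while no $\lambda<\lambda_0$ is an eigenvalue. First I would exhibit an element of $K$: choosing $v\in C_0^\infty(\Omega)$ that does not vanish identically on $\{\rho>0\}$, the map $t\mapsto B(tv)=\int_\Omega\rho(x)M(t|v|)\,dx$ is continuous, nondecreasing, equals $0$ at $t=0$ and tends to $+\infty$ as $t\to\infty$ (since $M(s)\to\infty$), so the intermediate value theorem gives $t_0>0$ with $t_0v\in K$; hence $0\le\lambda_1<\infty$. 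To get $\lambda_1>0$, take a minimizing sequence $u_n\in K$: if $J(u_n)\to0$ then $\int_\Omega M(|\nabla u_n|)\,dx\to0$, which forces $\|\nabla u_n\|_M\to0$ by a convexity estimate on the Luxemburg norm, hence $\|u_n\|_M\to0$ by the Poincar\'e inequality \cite[Lemma 5.7]{Gossez}, hence $\int_\Omega M(|u_n|)\,dx\to0$ and $B(u_n)\le\|\rho\|_{L^\infty}\int_\Omega M(|u_n|)\,dx\to0$, contradicting $B(u_n)=1$.

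For attainment, a minimizing sequence $\{u_n\}\subset K$ is bounded in $W^1_0L_M(\Omega)$ since $\|\nabla u_n\|_M\le\int_\Omega M(|\nabla u_n|)\,dx+1$ by \eqref{prop.b} and the right-hand side is bounded. By weak-$*$ sequential compactness of bounded sets in $L_M(\Omega)=(E_{\overline M}(\Omega))^*$ we may assume $u_n\to u$ for $\sigma(\Pi L_M,\Pi E_{\overline M})$ in $W^1_0L_M(\Omega)$. Lemma \ref{lemma1}(i) gives $B(u)=\lim_n B(u_n)=1$, so $u\in K$, and Lemma \ref{lemma1}(ii) gives $J(u)\le\liminf_n J(u_n)=\lambda_1$; hence $J(u)=\lambda_1$ and the infimum is attained at $u$. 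Note $u\neq0$ (as $J(0)=0<\lambda_1$) and, since $M(t)\le tm(t)$, $\int_\Omega\rho(x)m(|u|)|u|\,dx\ge\int_\Omega\rho(x)M(|u|)\,dx=1>0$.

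The heart of the proof is showing that this minimizer $u$ is a weak solution in the sense of Definition \ref{def1} with $\lambda=\lambda_1$. This is the main obstacle: $J$ is neither everywhere finite nor of class $\mathcal C^1$, and, no $\Delta_2$-condition being assumed, $L_M(\Omega)$ is non-reflexive and $E_M(\Omega)\subsetneq L_M(\Omega)$. Following \cite{Mustonen,Manasevich} I would fix $\psi\in W^1_0L_M(\Omega)$ and, using $\int_\Omega\rho(x)m(|u|)|u|\,dx>0$, apply the implicit function theorem to solve $B(u+t\psi+s(t)u)=1$ for $s(t)$ with $s(0)=0$; since $u+t\psi+s(t)u\in K$, the function $t\mapsto J(u+t\psi+s(t)u)$ has a minimum at $t=0$. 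The delicate point is that without $\Delta_2$ this function need not be finite for $t\neq0$, so the first-order condition has to be extracted through the non-smooth analysis of \cite{Mustonen,Manasevich} — working with one-sided directional derivatives of the convex functional $J$ and separately verifying the integrability $m(|\nabla u|)\in L_{\overline M}(\Omega)$ demanded by Definition \ref{def1}. The outcome is precisely \eqref{4} with the associated eigenvalue equal to $\lambda_1$, so $\lambda_1$ is an eigenvalue with eigenfunction $u$.

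It remains to compare $\lambda_0$ and $\lambda_1$ and to rule out small eigenvalues. For $v\in K$ and $t\in(0,1)$, the definition of $\lambda_0$ applied to $tv$ reads $\int_\Omega m(t|\nabla v|)\,t|\nabla v|\,dx\ge\lambda_0\int_\Omega\rho(x)m(t|v|)\,t|v|\,dx$; dividing by $t$, integrating over $t\in(0,1)$ and using $\int_0^1 m(ts)s\,dt=M(s)$ together with Tonelli's theorem yields $J(v)\ge\lambda_0B(v)=\lambda_0$, so $\lambda_1\ge\lambda_0$. Finally, if $\lambda$ is an eigenvalue with eigenfunction $v\neq0$, testing \eqref{4} with $\psi=v$ — admissible since $m(|\nabla v|)\in L_{\overline M}(\Omega)$, $\nabla v\in L_M(\Omega)$, and $\phi(|v|)|v|^2=m(|v|)|v|\le M(2|v|)\in L^1(\Omega)$ because $v\in E_M(\Omega)$ — gives $\int_\Omega\phi(|\nabla v|)|\nabla v|^2\,dx=\lambda\int_\Omega\rho(x)\phi(|v|)|v|^2\,dx$; the left-hand side is strictly positive, for otherwise $\nabla v=0$ a.e.\ and $v=0$ by Poincar\'e, a contradiction, so $\int_\Omega\rho(x)\phi(|v|)|v|^2\,dx>0$ and $\lambda$ equals the corresponding quotient, which is $\ge\lambda_0$ by definition of $\lambda_0$. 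Hence no $\lambda<\lambda_0$ can be an eigenvalue of \eqref{I}.
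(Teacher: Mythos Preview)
Your argument is correct for the theorem as stated, and the core steps---attainment via coercivity, weak-$*$ compactness, and Lemma~\ref{lemma1}, followed by the Euler--Lagrange identification by appeal to \cite{Mustonen,Manasevich}---coincide with the paper's Steps~1 and~2 (the paper simply cites \cite[Theorem 4.2]{Manasevich} for the latter without further detail).

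Where you depart from the paper is in deriving $\lambda_0\le\lambda_1$. The paper obtains this \emph{indirectly}: having first shown that $\lambda_1$ is an eigenvalue and then that no $\lambda<\lambda_0$ can be one, it concludes $\lambda_0\le\lambda_1$ as a corollary. Your route is \emph{direct} and decoupled from the Lagrange-multiplier step: applying the quotient defining $\lambda_0$ to $tv$ for $v\in K$, dividing by $t$, and integrating over $t\in(0,1)$ using $\int_0^1 m(ts)\,s\,dt=M(s)$ to obtain $J(v)\ge\lambda_0 B(v)=\lambda_0$ for every $v\in K$. This is cleaner and shows that $\lambda_0\le\lambda_1$ is a purely variational inequality between the two Rayleigh-type quantities, valid before one even knows the minimizer solves \eqref{4}. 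Your exclusion of eigenvalues below $\lambda_0$ then matches the paper's Step~3 (testing \eqref{4} with the eigenfunction itself), and is in fact slightly more careful in justifying positivity of the denominator.

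One caution on your preliminary Step~1: the implication ``$\int_\Omega M(|\nabla u_n|)\,dx\to 0$ forces $\|\nabla u_n\|_M\to 0$'' is false in general when $M$ fails the $\Delta_2$-condition (take $f_n=n\chi_{A_n}$ with $|A_n|=1/(nM(n))$ for a rapidly growing $M$; then the modular tends to $0$ while the Luxemburg norm stays bounded away from $0$). Fortunately this step is superfluous: positivity of $\lambda_1$ is not part of the statement, and in any case it follows immediately once attainment is established, since a minimizer $u\in K$ with $J(u)=0$ would satisfy $\nabla u=0$ a.e., hence $u=0$, contradicting $B(u)=1$.
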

\begin{proof}
	We split the proof of Theorem \ref{thm1} into three steps.\\
	\textbf{Step 1 }: We show that the infimum in \eqref{lambda1} is achieved at some $u\in K$.
	By \eqref{prop.b} we have
	$$
	J(u)=\int_{\Omega}M(|\nabla u|)dx\geq\|\nabla u\|_{M}-1.
	$$
	So, $J$ is coercive. Let $\{u_{n}\}\subset W^{1}_{0}L_{M}(\Omega)$ be a minimizing sequence, i.e. $u_{n}\in K$ and $u_{n}\to\inf_{v\in K}J(v)$.
	The coercivity of $J$
	implies that $\{u_{n}\}$ is bounded in $W^{1}_{0}L_{M}(\Omega)$ which is in the dual of a separable Banach
	space. By the Banach-Alaoglu-Bourbaki theorem, there exists $u\in W^{1}_{0}L_{M}(\Omega)$ such that
	for a subsequence still indexed by $n$, $u_n\to u$ for $\sigma(\Pi L_{M},\Pi E_{\overline{M}})$
	in $W^{1}_{0}L_{M}(\Omega)$. As a consequence of Lemma \ref{lemma1} the set $K$ is closed with respect to the topology $\sigma(\Pi L_{M},\Pi E_{\overline{M}})$ in $W^{1}_{0}L_{M}(\Omega)$. Thus, $u\in K$. Since $J$ is $\sigma(\Pi L_{M},\Pi E_{\overline{M}})$ lower semi-continuous, it follows
	$$
	J(u)\leq\lim\inf J(u_{n})=\inf_{v\in K}J(v),
	$$
	which shows that $u$ is a solution of \eqref{lambda1}.\\
	\textbf{Step 2 }: The function $u\in K$ found in Step 1 is such that $m(|\nabla u|)\in L_{\overline{M}}(\Omega)$ and satisfies \eqref{4}. This was already proved in \cite[Theorem 4.2]{Manasevich}.\\
	\textbf{Step 3 }: Let $\lambda_{0}$ be given by (\ref{lambda0}). Any value $\lambda<\lambda_{0}$ cannot be an eigenvalue of problem (\ref{I}). Indeed, suppose by contradiction that there exists a value
	$\lambda\in(0,\lambda_{0})$ which is an eigenvalue of
	problem \eqref{I}. It follows that there exists $u_{\lambda}\in W^{1}_{0}L_{M}(\Omega)\setminus\{0\}$ such that
	$$
	\displaystyle\int_{\Omega}\phi(|\nabla u_{\lambda}|)\nabla u_{\lambda}\cdot\nabla vdx=
	\lambda\displaystyle\int_{\Omega}\rho(x)\phi(|u_{\lambda}|)u_{\lambda}vdx\mbox{ for all }v\in W^{1}_{0}L_{M}(\Omega).
	$$
	Thus, in particular for $v=u_{\lambda}$ we can write
	$$
	\int_{\Omega}\phi(|\nabla u_{\lambda}|)|\nabla u_{\lambda}|^{2}dx=
	\lambda\int_{\Omega}\rho(x)\phi(|u_{\lambda}|)|u_{\lambda}|^{2}dx.
	$$
	The fact that $u_{\lambda}\in W^{1}_{0}L_{M}(\Omega)\setminus\{0\}$ ensures that
	$\displaystyle\int_{\Omega}\rho(x)\phi(|u_{\lambda}|)|u_{\lambda}|^{2}dx>0$.
	By the definition of $\lambda_{0}$, we obtain
	$$
	\begin{array}{clc}
	\int_{\Omega}\phi(|\nabla u_{\lambda}|)|\nabla u_{\lambda}|^{2}dx&\geq
	\lambda_{0}\int_{\Omega}\rho(x)\phi(|u_{\lambda}|)|u_{\lambda}|^{2}dx&\\
	&>\lambda\int_{\Omega}\rho(x)\phi(|u_{\lambda}|)|u_{\lambda}|^{2}dx&\\
	&=\int_{\Omega}\phi(|\nabla u_{\lambda}|)|\nabla u_{\lambda}|^{2}dx.
	\end{array}
	$$
	Which yields a contradiction. Therefore, we conclude that $\lambda_0\leq\lambda_1$. The proof of Theorem \ref{thm1} is now complete.
\end{proof}
\subsection{Isolation result}
In this subsection we first show a maximum principle which enables us to prove that any eigenfunction associated to $\lambda_1$ has a constant sign in $\Omega$. This property is then used to prove that $\lambda_1$ is isolated from the right-hand side.
\par Let $w$ be an eigenfunction of problem (\ref{I}) associated to the eigenvalue $\lambda_1$. Since $|w|\in K$ it
follows that $|w|$ achieves also the infimum in \eqref{lambda1}, which implies that $|w|$ is also an eigenfunction associated to $\lambda_1$. So we can assume that $w$ is non-negative, that is
$$
w(x)\geq 0 \mbox{ for }x\in\Omega.
$$
Since by Theorem \ref{bounded} the eigenfunction $w$ is bounded, we set 
$$
0\leq\delta:=\sup_\Omega w<+\infty.
$$
For $t\in(0,\delta)$ the function $f(t)=\phi(t)t=m(t)>0$ is continuous and strictly increasing. Let $F(s)=\displaystyle\int_{0}^{s}f(t)dt$.
We assume that 
\begin{equation}\label{H}
\int_{0}^{\delta}\frac{ds}{H^{-1}(M(s))}=+\infty,
\end{equation}
where $H$ is the function defined for all $t\geq0$ by
$$
H(t)=tm(t)-M(t)=\overline{M}(m(t)).
$$
The assumption (\ref{H}) is known to be a necessary condition for the strong maximum principle to hold (see \cite{Pucci} and the references therein). Hereafter, under (\ref{H}) we can compare $w$ to a suitable function given by \cite[Lemma 2]{Pucci}.
\par The proof of the strong maximum principle will be given after proving the following two Lemmas.
\begin{lemma}\label{lem.eq} 
	Denote by $B(y,R)$ an open ball in $\Omega$ of radius $R$ and centered at $y\in\Omega$ and consider the annulus
	$$
	E_R=\Big\{x\in B(y,R):\frac{R}{2}\leq|y-x|<R\Big\}.
	$$
	Assume that \eqref{H} holds. Then there exists a function $v\in C^{1}$ with
	$0<v<\delta$, $v^{\prime}<0$ in $E_R$ and $w\geq v$ on $\partial E_R$. Moreover, $v$ satisfies 
	\begin{equation}\label{sol.distr}
	-\int_{\Omega}\phi(|\nabla v|)\nabla v\cdot\nabla \psi dx\leq\int_{\Omega}f(v)\psi dx,
	\end{equation}
	for every $\psi\in W^{1}_{0}L_M(\Omega)$ and $\psi\leq0$.
\end{lemma}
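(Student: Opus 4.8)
The plan is to take $v$ radial about $y$, $v=v(r)$ with $r=|x-y|$, strictly decreasing on $[R/2,R]$ and with $v(R)=0$, and to reduce \eqref{sol.distr} to a one-dimensional differential inequality which is then solved by a Pucci--Serrin barrier. Since $\nabla v=v'(r)\tfrac{x-y}{r}$, $|\nabla v|=-v'$ and $f(t)=m(t)=\phi(t)t$, a computation in polar coordinates gives
$$
\operatorname{div}\!\big(\phi(|\nabla v|)\nabla v\big)=-\frac{1}{r^{N-1}}\Big(r^{N-1}m(-v')\Big)'.
$$
Hence, if $v$ satisfies $\big(r^{N-1}m(-v')\big)'\le -r^{N-1}m(v)$ on $(R/2,R)$ -- equivalently $\operatorname{div}(\phi(|\nabla v|)\nabla v)\ge f(v)$ on $E_R$ -- then multiplying by $-\psi\ge0$ and integrating by parts over $E_R$ gives \eqref{sol.distr} (the boundary contribution being harmless for the test functions occurring in the later comparison step, namely those vanishing on the inner sphere; if one insists on all $\psi\le0$ one extends $v$ by $v(R/2)$ on $B(y,R/2)$ and by $0$ outside $B(y,R)$). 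So it remains to build a $C^1$, strictly decreasing $v$ on $[R/2,R]$ with $0<v<\delta$ on $[R/2,R)$, $v(R)=0$, the above ODE inequality, and $v(R/2)\le\min_{|x-y|=R/2}w$.

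To build $v$ I would use the time-map substitution of Pucci and Serrin. Put $\tau(r)=-v'(r)>0$ and regard $\tau$ as a function of $v$, so that $\tfrac{dr}{dv}=-\tfrac1\tau$; using $H'(t)=t\,m'(t)$ and $F=M$ (since $f=m$), the ODE inequality becomes $\tfrac{d}{dv}H(\tau)\ge m(v)+\tfrac{N-1}{r}m(\tau)$. Following \cite[Lemma 2]{Pucci}, one chooses $\tau=\tau(v)$ solving this -- essentially $\tau=H^{-1}(M(v)+p(v))$ with $\tau(0)=\tau_0>0$, where $p$ is a small perturbation absorbing the lower-order term $\tfrac{N-1}{r}m(\tau)$ by means of $r\ge R/2$ -- and recovers $v$ on $[R/2,R]$ by inverting $r\mapsto R-\int_0^{v}\frac{ds}{\tau(s)}$. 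Since $\tau\ge\tau_0>0$ the integral is finite; this yields $v\in C^1$, $v'=-\tau<0$ on the annulus, $v(R)=0$, $v'(R)=-\tau_0<0$, and $v(R/2)=a$ where $\int_0^{a}\frac{ds}{\tau(s)}=R/2$.

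Hypothesis \eqref{H} enters precisely here, and it is the crux. Let $c_0:=\min_{|x-y|=R/2}w$, which is positive because $w$ is H\"older continuous and, in the situation where the lemma is applied, positive on $\overline{B(y,R/2)}$. As $\tau_0\downarrow0$ the solution $\tau$ tends to the one issued from $\tau_0=0$, which behaves like $H^{-1}(M(v))$ near $v=0$, so for any fixed small $a\in(0,\delta)$
$$
\int_0^{a}\frac{ds}{\tau(s)}\ \longrightarrow\ \int_0^{a}\frac{ds}{H^{-1}(M(s))}=+\infty\qquad(\tau_0\to0)
$$
by \eqref{H} (recall $\int_a^{\delta}\frac{ds}{H^{-1}(M(s))}<\infty$). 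Fixing $a=\tfrac12\min\{\delta,c_0\}$, one may therefore choose $\tau_0>0$ so small that $\int_0^{a}\frac{ds}{\tau(s)}\ge R/2$; by continuity there is $a'\in(0,a]$ with $\int_0^{a'}\frac{ds}{\tau(s)}=R/2$, and one sets $v(R/2)=a'\ (\le c_0,\ <\delta)$. Then $0<v<\delta$ and $v'<0$ in $E_R$, while $w\ge c_0\ge v$ on $\{|x-y|=R/2\}$ and $w\ge0=v(R)$ on $\{|x-y|=R\}$, so $w\ge v$ on $\partial E_R$. Combined with the first paragraph, this gives \eqref{sol.distr} and finishes the construction.

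The hard part is exactly this calibration: the barrier has to be flat enough ($\sup_{E_R}v\le c_0$) to lie under $w$ on the inner sphere, yet still vanish at the fixed outer radius $R$, and it is the divergence of $\int_0^{a}ds/H^{-1}(M(s))$ supplied by \eqref{H} that makes both requirements compatible. The cleanest way to carry out the ODE step -- in particular the careful treatment of the curvature term $\tfrac{N-1}{r}m(\tau)$, where one must be watchful because no $\Delta_2$-condition is available -- is to quote \cite[Lemma 2]{Pucci} verbatim and merely verify the passage to the weak inequality \eqref{sol.distr}.
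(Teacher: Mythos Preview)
Your proposal is correct and follows essentially the same route as the paper: both arguments take $v$ radial, invoke \cite[Lemma 2]{Pucci} to obtain the one-dimensional inequality $[m(|v'|)]'+\tfrac{N-1}{r}m(|v'|)+f(v)\le 0$ with $0<v<\epsilon<\delta$ and $v(R)=0$, convert this to $\operatorname{div}(\phi(|\nabla v|)\nabla v)\ge f(v)$ via the radial divergence formula, and then multiply by $\psi\le 0$ and integrate. The paper simply cites the Pucci--Serrin lemma as a black box (choosing $\epsilon<\inf_{\{|y-x|=R/2\}}w$ to secure $w\ge v$ on the inner sphere), whereas you additionally sketch the time-map construction and explain more explicitly why \eqref{H} is what makes the calibration $v(R/2)\le c_0$ possible; your remark about handling the boundary term in the passage to \eqref{sol.distr} is also more careful than the paper's one-line ``multiply and integrate.''
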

\begin{proof}
	Let $r=|y-x|$ for $x\in\overline{E}_R$. The function $v(x)=v(r)$ given by \cite[Lemma 2]{Pucci} satisfies for every positive numbers $k$, $l$, and for $\epsilon\in(0,\delta)$ 
	$$
	[m(|v^\prime|)]^\prime+\frac{k}{r}m(|v^\prime|)+lf(v)\leq0,
	$$
	$0<v<\epsilon<\delta$, $v^{\prime}<0$ in $E_R$ and $v(x)=0$ if $|y-x|=R$.
	In addition, for $x\in E_{R}$ with $|y-x|=\frac{R}{2}$ we have
	$v(x)<\epsilon<\inf_{\{x:|y-x|=\frac{R}{2}\}}w(x)<\delta$.
	Hence, follows $w\geq v$ on $\partial E_R$. Moreover, by the radial symmetric expression of $\mbox{div}(\phi(|\nabla v|)\nabla v)$, we have
	$$
	\begin{array}{clc}
	\mbox{div}(\phi(|\nabla v|)\nabla v)-f(v)
	&=-[m(|v^\prime|)]^\prime-\frac{(N-1)}{r}m(|v^\prime|)-f(v)\geq0,
	\end{array}
	$$
	where we recall that $v^\prime<0$ and use \cite[Lemma 2]{Pucci}. Multiplying the above inequality by $\psi\in W^{1}_{0}L_M(\Omega)$ with $\psi\leq0$ and then integrating over $\Omega$ we obtain (\ref{sol.distr}). The proof is achieved.
\end{proof}
\begin{lemma}[Weak comparison principle]\label{comparison} 
	Assume that \eqref{H} holds. Let $v$ be the $\mathcal{C}^1$-function given by Lemma \ref{lem.eq} with $0<v<\delta$ in $\Omega$ and $w\geq v$ on $\partial\Omega$. Then $w\geq v$ in $\Omega$.
\end{lemma}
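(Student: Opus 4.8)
The plan is to argue by contradiction, using the monotonicity of the vector field $\xi\mapsto\phi(|\xi|)\xi$ together with the test function $\psi=(v-w)^{+}$, which belongs to $W^{1}_{0}L_{M}(\Omega)$ because $w\geq v$ on $\partial\Omega$ and $\psi\leq 0$ is not the relevant sign here — rather, the right test object against the inequality (\ref{sol.distr}) is $-(v-w)^{+}\leq 0$. First I would set $\psi=-(v-w)^{+}$, which is admissible in Lemma \ref{lem.eq} since it is nonpositive and lies in $W^{1}_{0}L_{M}(\Omega)$ (its trace vanishes on $\partial\Omega$ as $\{v>w\}$ does not meet $\partial\Omega$). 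Plugging this into (\ref{sol.distr}) gives
$$
\int_{\{v>w\}}\phi(|\nabla v|)\nabla v\cdot\nabla(v-w)\,dx\leq -\int_{\{v>w\}}f(v)(v-w)^{+}dx\leq 0,
$$
where I used $f(v)=m(v)>0$ and $(v-w)^{+}\geq 0$.

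Next I would exploit that $w$ is a weak solution of (\ref{I}) with eigenvalue $\lambda_{1}$: testing Definition \ref{def1} with the admissible function $(v-w)^{+}\in W^{1}_{0}L_{M}(\Omega)$ yields
$$
\int_{\{v>w\}}\phi(|\nabla w|)\nabla w\cdot\nabla(v-w)\,dx=\lambda_{1}\int_{\{v>w\}}\rho(x)\phi(|w|)w\,(v-w)^{+}dx.
$$
Subtracting the two displays and rearranging, I obtain
$$
\int_{\{v>w\}}\big(\phi(|\nabla v|)\nabla v-\phi(|\nabla w|)\nabla w\big)\cdot\nabla(v-w)\,dx\leq -\lambda_{1}\int_{\{v>w\}}\rho(x)\phi(|w|)w\,(v-w)^{+}dx\leq 0,
$$
the last inequality because $\rho\geq 0$, $\phi>0$, $w\geq 0$ on $\{v>w\}$ (indeed $w>v-(v-w)^{+}$... more simply $w\geq 0$ everywhere by our normalization of $w$). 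The left-hand side is an integral of $\big(\phi(|\nabla v|)\nabla v-\phi(|\nabla w|)\nabla w\big)\cdot(\nabla v-\nabla w)$ over $\{v>w\}$. By the strict monotonicity of the operator $\xi\mapsto\phi(|\xi|)\xi$ (which follows from the assumption that $m(t)=\phi(t)t$ is strictly increasing, via the standard inequality $(\phi(|\xi|)\xi-\phi(|\eta|)\eta)\cdot(\xi-\eta)>0$ for $\xi\neq\eta$), the integrand is nonnegative and vanishes only where $\nabla v=\nabla w$. Hence the integral is $\geq 0$, forcing it to equal $0$, so $\nabla v=\nabla w$ a.e. on $\{v>w\}$, i.e. $\nabla(v-w)^{+}=0$ a.e. in $\Omega$.

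Therefore $(v-w)^{+}$ is constant on each connected component of $\Omega$; since its trace on $\partial\Omega$ vanishes (as $v\leq w$ there) and $(v-w)^{+}\in W^{1}_{0}L_{M}(\Omega)$, that constant is $0$, whence $(v-w)^{+}\equiv 0$ and $w\geq v$ in $\Omega$, as claimed. The step I expect to be the main technical point is verifying carefully that $(v-w)^{+}\in W^{1}_{0}L_{M}(\Omega)$ and that it is a legitimate test function in both (\ref{4}) and (\ref{sol.distr}): one needs the boundary condition $w\geq v$ on $\partial\Omega$ together with the fact that $v$ is $\mathcal{C}^{1}$ and $w\in W^{1}_{0}L_{M}(\Omega)$ to conclude that the positive part is in the zero-trace space — truncation stability of $W^{1}_{0}L_{M}(\Omega)$ is available but must be invoked, and the sign condition $\psi\leq 0$ in Lemma \ref{lem.eq} dictates testing (\ref{sol.distr}) with $-(v-w)^{+}$ rather than $(v-w)^{+}$ directly.
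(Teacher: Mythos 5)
Your approach is the same as the paper's in spirit: test the inequality \eqref{sol.distr} and the weak equation \eqref{4} with a cut-off of the difference, subtract, and invoke the strict monotonicity of $\xi\mapsto\phi(|\xi|)\xi$ (Lemma~\ref{monotone}) to force the gradient of the difference to vanish. The algebra and the sign-checking in your displays are correct.

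The gap is exactly the one you flag and then leave unresolved: the admissibility of $(v-w)^{+}$ (equivalently $-(v-w)^{+}$) as a test function in $W^{1}_{0}L_{M}(\Omega)$. In the non-reflexive Orlicz--Sobolev setting without $\Delta_2$, ``truncation stability of $W^{1}_{0}L_{M}$'' is not an off-the-shelf fact that can simply be ``invoked,'' and the mere observation that $\{v>w\}$ does not meet $\partial\Omega$ does not show the \emph{support} (the closure of $\{v>w\}$) stays inside $\Omega$ --- it can accumulate on $\partial\Omega$ where $v=w$. This is precisely why the paper does not test with $\min\{w-v,0\}$ directly but with the $\epsilon$-shifted function $h_\epsilon=\min\{(w-v)+\epsilon,0\}$: it argues by contradiction (assume $w(x_1)<v(x_1)$, pick $\epsilon$ with $w(x_1)-v(x_1)+\epsilon<0$), and then uses the interior continuity of $w$ from Theorem~\ref{Holder} together with $w\geq v$ on $\partial\Omega$ to conclude that $h_\epsilon$ has \emph{compact} support in $\Omega$, so that Theorem~\ref{thm.Adx1} applies and puts $h_\epsilon$ in $W^{1}_{0}L_{M}(\Omega)$. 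The $\epsilon$ also makes the final step clean: $\nabla h_\epsilon=0$ gives $h_\epsilon\equiv 0$, i.e.\ $w-v+\epsilon\geq0$ everywhere, contradicting the choice of $x_1$ --- no appeal to a Poincar\'e-type argument to kill a constant is needed. To repair your write-up you would either have to prove the truncation/trace lemma you are implicitly using, or switch to the $\epsilon$-shifted test function as the paper does.
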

\begin{proof}
	Let $h=w-v$ in $\Omega$. Assume by contradiction that there exists $x_1\in\Omega$ such that $h(x_1)<0$. Fix $\epsilon>0$ so small that $h(x_1)+\epsilon<0$. By Theorem \ref{Holder} (see Appendix) the function $w$ is continuous in $\Omega$, then so is the function $h$. Since $h\geq0$ on $\partial\Omega$,  the support $\Omega_0$ of the function $h_\epsilon=\min\{h+\epsilon,0\}$ is a compact subset in $\Omega$. By Theorem \ref{thm.Adx1} (see Appendix), the function $h_\epsilon$ belongs to $W^{1}_{0}L_{M}(\Omega)$. Taking it as a test function in \eqref{4} and \eqref{sol.distr} it yields
	$$
	\int_{\Omega_0}\phi(|\nabla w|)\nabla w\cdot(\nabla w-\nabla v )dx=\lambda_{1}\int_{\Omega_0}\rho(x)\phi(|w|)wh_\epsilon dx
	$$
	and
	$$
	-\int_{\Omega_0}\phi(|\nabla v|)\nabla v\cdot(\nabla w-\nabla v )dx\leq\int_{\Omega_0}\phi(|v|)vh_\epsilon dx.
	$$
	Summing up the two formulations, we obtain
	\begin{equation}\label{eq1}
	\int_{\Omega_0}[\phi(|\nabla w|)\nabla w-\phi(|\nabla v|)\nabla v]\cdot(\nabla w-\nabla v)dx\leq\int_{\Omega_0}(\lambda_{1}\rho(x)m(w)+m(v))h_\epsilon dx.
	\end{equation}
	The left-hand side of \eqref{eq1} is positive due to Lemma \ref{monotone} (given in Appendix), while the right-hand side of \eqref{eq1} is non positive, since $h_\epsilon<0$ in $\Omega_0$. Therefore,
	$$
	\int_{\Omega_0}[\phi(|\nabla w|)\nabla w-\phi(|\nabla v|)\nabla v]\cdot(\nabla w-\nabla v)dx=0
	$$
	implying $\nabla h_\epsilon=0$ and so $h+\epsilon>0$ which contradicts the fact that $h(x_1)+\epsilon<0$.
\end{proof}
\par Now we can prove our strong maximum principle.
\begin{theorem}[Strong maximum principle]\label{maximum} 
	Assume that \eqref{H} holds. Then, if $w$ is a non-negative eigenfunction associated to $\lambda_1$, then $w>0$ in $\Omega$.
\end{theorem}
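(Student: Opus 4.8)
The plan is to argue by contradiction, combining the comparison function of Lemma~\ref{lem.eq} with the weak comparison principle of Lemma~\ref{comparison} and arranging the geometry so that no $\mathcal{C}^{1}$ regularity of $w$ is needed. Suppose $w$ is not strictly positive, and set $\Omega^{+}=\{x\in\Omega:\,w(x)>0\}$. Since $w$ is a non-trivial eigenfunction associated to $\lambda_{1}$, $\Omega^{+}\neq\emptyset$; since $w$ is continuous in $\Omega$ (Theorem~\ref{Holder}), $\Omega^{+}$ is open; and $\Omega^{+}\neq\Omega$ by assumption. As $\Omega$ is connected, $\Omega^{+}$ is not relatively closed in $\Omega$, so some $p\in\partial\Omega^{+}\cap\Omega$, necessarily with $w(p)=0$, is a limit of points of $\Omega^{+}$. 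Choosing $y\in\Omega^{+}$ close enough to $p$ and setting $r_{0}=\operatorname{dist}(y,\Omega\setminus\Omega^{+})>0$, we may take $r_{0}$ so small that $\tfrac{4}{3}r_{0}<\operatorname{dist}(y,\partial\Omega)$; pick $z_{0}\in\partial B(y,r_{0})\cap(\Omega\setminus\Omega^{+})$, so that $w(z_{0})=0$, and put $R=\tfrac{4}{3}r_{0}$. Then $\overline{B(y,R)}\subset\Omega$; the inner sphere $\{|x-y|=R/2\}$ of the annulus $E_{R}=\{R/2\le|x-y|<R\}$ lies in $B(y,r_{0})\subset\Omega^{+}$, so $\mu:=\min_{|x-y|=R/2}w>0$ (and clearly $\mu\le\delta$); and $R/2<|z_{0}-y|=r_{0}<R$, hence $z_{0}\in E_{R}$.

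As $w$ is a weak solution of \eqref{I} associated to $\lambda_{1}$ with $w\ge0$, it satisfies $-\operatorname{div}(\phi(|\nabla w|)\nabla w)=\lambda_{1}\rho(x)m(w)\ge0$ in $\Omega$, i.e.\ $w$ is a supersolution. Apply Lemma~\ref{lem.eq} on the ball $B(y,R)$ and the annulus $E_{R}$, choosing the parameter there so that $0<\epsilon<\mu$: this produces $v\in\mathcal{C}^{1}$ with $0<v<\epsilon<\delta$ and $v'<0$ in $E_{R}$, $v=0$ on $\{|x-y|=R\}$, whence $v<\epsilon<\mu\le w$ on $\{|x-y|=R/2\}$, so $w\ge v$ on $\partial E_{R}$, and with $v$ satisfying \eqref{sol.distr} for test functions supported in $E_{R}$. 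By the argument of Lemma~\ref{comparison}, carried out with $E_{R}$ in place of $\Omega$---legitimate since $w$ is continuous on $\overline{E_{R}}$, $w\ge v$ on $\partial E_{R}$, the function $h_{\epsilon}:=\min\{w-v+\epsilon,0\}$ is compactly supported in $E_{R}$ and admissible as a test function in \eqref{4} and \eqref{sol.distr} by Theorem~\ref{thm.Adx1}, and Lemma~\ref{monotone} supplies the strict monotonicity of $\xi\mapsto\phi(|\xi|)\xi$---we obtain $w\ge v$ throughout $E_{R}$. In particular $z_{0}\in E_{R}$ with $|z_{0}-y|=r_{0}<R$ gives $v(z_{0})>0$, so that $0=w(z_{0})\ge v(z_{0})>0$, a contradiction. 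Therefore $\Omega^{+}=\Omega$, i.e.\ $w>0$ in $\Omega$.

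The main obstacle, and the reason the classical Hopf boundary-point argument is not available here, is that $w$ is only known to be H\"older continuous (Theorem~\ref{Holder}) rather than $\mathcal{C}^{1}$, so one cannot contradict $\nabla w(z_{0})=0$ by comparing normal derivatives at $z_{0}$. The device that circumvents this is to place the comparison annulus so that the vanishing point $z_{0}$ lies strictly in its interior rather than on its outer sphere, which turns the bare inequality $w\ge v$ into an outright numerical contradiction at $z_{0}$. The only auxiliary point requiring attention is that the weak comparison principle of Lemma~\ref{comparison}, stated over $\Omega$, applies verbatim on the subdomain $E_{R}$, which is immediate since its proof only tests the relations against a function compactly supported in $E_{R}$.
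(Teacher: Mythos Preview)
Your proof is correct. Both your argument and the paper's rest on Lemma~\ref{lem.eq} together with the weak comparison principle (Lemma~\ref{comparison}) applied on an annulus, but the geometry is organised differently. The paper fixes an arbitrary ball $B(y,R)\subset\Omega$ and iterates the comparison over the nested annuli $E_R,\,E_{R/2},\,E_{R/4},\ldots$ to conclude that $w>0$ throughout $B(y,R)$; this direct approach tacitly requires $w>0$ on the inner sphere $\{|x-y|=R/2\}$ of the first annulus (so that an admissible $\epsilon$ exists in Lemma~\ref{lem.eq}), a point the paper does not spell out. Your contradiction argument is more careful precisely here: by selecting $y\in\Omega^{+}$ near a boundary zero of $w$ and taking $R=\tfrac{4}{3}\operatorname{dist}(y,\Omega\setminus\Omega^{+})$, you force the inner sphere to lie in $\Omega^{+}$ (so $\mu>0$ and Lemma~\ref{lem.eq} applies with $\epsilon<\mu$) while placing a zero $z_{0}$ of $w$ strictly inside $E_R$, so that a single application of the comparison principle already yields the contradiction $0=w(z_{0})\ge v(z_{0})>0$. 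Your route avoids the iteration entirely and makes no implicit positivity assumption; the paper's route, once its initial step is justified (for instance by centring the first ball at a point where $w>0$), is slightly more constructive.
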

\begin{proof}
	Let $B(y,R)$ be an open ball of $\Omega$ of radius $R$ and centered at a fixed arbitrary $y\in\Omega$. We shall prove that $w(x)>0$ for all $x\in B(y,R)$. Let $v$ be the $\mathcal{C}^1$-function given by Lemma \ref{lem.eq} with $w\geq v$ on $\partial E_R$ where
	$$
	E_R=\Big\{x\in B(y,R):\frac{R}{2}\leq|y-x|<R\Big\}.
	$$
	Applying Lemma \ref{comparison} we get $w\geq v>0$ in $E_R$. For $|y-x|<\frac{R}{2}$ we consider
	$$
	E_\frac{R}{2}=\Big\{x\in B(y,R):\frac{R}{4}\leq|y-x|<\frac{R}{2}\Big\}.
	$$
	We can us similar arguments as in the proof of Lemma \ref{lem.eq} to obtain that there is $v\in C^{1}$ in $E_\frac{R}{2}$, with $v>0$ in $E_\frac{R}{2}$ and $w\geq v$ on $\partial E_\frac{R}{2}$. Applying again Lemma \ref{comparison} we obtain $w\geq v>0$ in $E_\frac{R}{2}$.
	So, by the same way we can conclude that $w(x)>0$ for any $x\in B(y,R)$.
\end{proof}
\par Now we are ready to prove that the associated eigenfunction of $\lambda_1$ has necessarily a constant sign in $\Omega$.
\begin{proposition}\label{prop1} 
	Assume that \eqref{H} holds. Then, every eigenfunction $u$ associated to the eigenvalue $\lambda_1$ has constant sign in $\Omega$, that is, either $u>0$ in $\Omega$ or $u<0$ in $\Omega$.
\end{proposition}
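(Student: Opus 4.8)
The plan is to reduce Proposition \ref{prop1} to the strong maximum principle (Theorem \ref{maximum}) together with the observation, already used at the start of this subsection, that $|u|$ is again an eigenfunction associated with $\lambda_1$ whenever $u$ is. First I would fix an eigenfunction $u$ associated with $\lambda_1$ and set $v=|u|$. Since $v\in K$ and $J(v)=J(u)=\lambda_1$, the function $v$ also realizes the infimum in \eqref{lambda1}, hence $v$ is a non-negative eigenfunction associated with $\lambda_1$. By Theorem \ref{Holder} (Appendix) $v$ is continuous in $\Omega$, and by Theorem \ref{maximum}, which applies because \eqref{H} is assumed, we get $v>0$ in $\Omega$; that is, $|u(x)|>0$ for every $x\in\Omega$, so $u$ never vanishes in $\Omega$.

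Next I would upgrade "$u$ never vanishes" to "$u$ has constant sign". By Theorem \ref{Holder} the eigenfunction $u$ itself is continuous in $\Omega$, and $\Omega$ is a (connected) domain. The sets $\Omega^{+}=\{x\in\Omega:u(x)>0\}$ and $\Omega^{-}=\{x\in\Omega:u(x)<0\}$ are therefore open, disjoint, and by the previous paragraph $\Omega=\Omega^{+}\cup\Omega^{-}$. Connectedness of $\Omega$ forces one of them to be empty, which is exactly the assertion that either $u>0$ in $\Omega$ or $u<0$ in $\Omega$.

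The only point that needs a little care — and the place I would expect a referee to look — is the claim that $|u|$ is genuinely an eigenfunction in the sense of Definition \ref{def1}, i.e. that $m(|\nabla |u||)\in L_{\overline M}(\Omega)$ and that $|u|$ satisfies the weak formulation \eqref{4} with $\lambda=\lambda_1$. For the integrability condition one uses $|\nabla|u||=|\nabla u|$ a.e., so $m(|\nabla|u||)=m(|\nabla u|)\in L_{\overline M}(\Omega)$ because $u$ is a weak solution. For the weak formulation, the argument is precisely the one recorded just before this proposition: $|u|\in K$ and $J$ attains its constrained minimum $\lambda_1$ at $|u|$, so by Theorem \ref{thm1} (Step 2, via \cite[Theorem 4.2]{Manasevich}) any minimizer of \eqref{lambda1} is a weak solution of \eqref{I} with eigenvalue $\lambda_1$; hence $|u|$ is an eigenfunction. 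I would state this explicitly so that the hypotheses of Theorem \ref{maximum} are visibly met, and then the two-line connectedness argument closes the proof.
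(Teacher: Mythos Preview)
Your proposal is correct and follows essentially the same route as the paper: pass from $u$ to $|u|$, observe that $|u|$ is again a minimizer of \eqref{lambda1} and hence an eigenfunction, apply Theorem~\ref{maximum} to get $|u|>0$, and conclude via continuity (Theorem~\ref{Holder}) and connectedness of $\Omega$. The paper compresses the last step into a single clause, whereas you spell out the $\Omega^+\cup\Omega^-$ decomposition and the integrability check $m(|\nabla|u||)=m(|\nabla u|)\in L_{\overline M}(\Omega)$; these are welcome clarifications but do not change the argument.
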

\begin{proof}
	Let $u$ be an eigenfunction associated to the eigenvalue $\lambda_1$. Then $u$ achieves the infimum in \eqref{lambda1}. Since $|u|\in K$ it follows that $|u|$ achieves also the infimum in \eqref{lambda1}, which implies that $|u|$ is also an eigenfunction associated to $\lambda_1$. Therefore, applying Theorem \ref{maximum} with $|u|$ instead of $w$, we obtain $|u|>0$ for all $x\in\Omega$ and since $u$ is continuous (see Theorem \ref{Holder} in Appendix), then, either $u>0$ or $u<0$ in $\Omega$.
\end{proof}
Before proving the isolation of $\lambda_1$, we shall prove that every eigenfunction associated to another eigenvalue $\lambda>\lambda_{1}$ changes in force its sign in $\Omega$. Denote by $|E|$ the Lebesgue measure of a subset $E$ of $\Omega$.
\begin{proposition}
	Assume that \eqref{H} holds. If $v\in W^{1}_{0}L_{M}(\Omega)$ is an eigenfunction associated to an eigenvalue $\lambda>\lambda_{1}$. Then $v^{+}\ncong0$ and $v^{-}\ncong0$ in $\Omega$. Moreover, if we set $\Omega^{+}=\{x\in\Omega:v(x)>0\}$ and $\Omega^{-}=\{x\in\Omega:v(x)<0\}$, then
	\begin{equation}\label{estimate1}
	\min\{|\Omega^{+}|,|\Omega^{-}|\}\geq \min\bigg\{\frac{1}{\overline{M}\Big(\frac{dc}{\min\{a,1\}}\Big)},
	\frac{1}{\overline{M}\Big(\frac{dc}{\min\{b,1\}}\Big)}\bigg\}
	\end{equation}
	where $a=\int_{\Omega}v^{+}(x)dx$, $b=\int_{\Omega}v^{-}(x)dx$,
	$c=c(\lambda,|\Omega|,\|v\|_{\infty},\|\rho\|_{\infty})$ and $d$ is the
	constant in the Poincar\'e norm inequality (see \cite[Lemma 5.7]{Gossez}).
\end{proposition}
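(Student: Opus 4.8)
\emph{Proof proposal.} As a weak solution of \eqref{I}, $v$ is bounded and continuous on $\Omega$ by Theorems \ref{bounded} and \ref{Holder}, and its parts $v^{+}=\max\{v,0\}$, $v^{-}=\max\{-v,0\}$ lie in $W^{1}_{0}L_{M}(\Omega)$ with $\nabla v^{+}=\nabla v\,\mathds{1}_{\{v>0\}}$, $\nabla v^{-}=-\nabla v\,\mathds{1}_{\{v<0\}}$ a.e.\ (truncation stability of $W^{1}_{0}L_{M}(\Omega)$, cf.\ Theorem \ref{thm.Adx1}). I would prove the sign change first and then \eqref{estimate1}.

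\emph{Sign change.} Arguing by contradiction, suppose $v$ has constant sign; up to replacing $v$ by $-v$ we may assume $v\ge 0$, $v\not\equiv 0$. Since $\rho\ge 0$ and $m(v)\ge 0$, the Pucci--Serrin comparison of Lemmas \ref{lem.eq}--\ref{comparison} and the argument of Theorem \ref{maximum} go through verbatim with $\lambda$ in place of $\lambda_{1}$, so $v>0$ in $\Omega$. Let $u_{1}$ be an eigenfunction of $\lambda_{1}$, which exists by Theorem \ref{thm1} and is positive by Proposition \ref{prop1}. The desired contradiction is a comparison between the positive eigenfunctions $v$ and $u_{1}$ forcing $\lambda\le\lambda_{1}$: in the $p$-homogeneous case it is Picone's inequality, obtained by testing the weak formulations of the $v$- and $u_{1}$-equations against $u_{1}^{p}/v^{p-1}$ and $v^{p}/u_{1}^{p-1}$. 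Here $M$ is not homogeneous, such quotients are not admissible test functions, and producing $\lambda\le\lambda_{1}$ is the main obstacle; I would derive it by a D\'iaz--Sa\'a / Barta type monotonicity argument, exploiting the strict monotonicity of $\xi\mapsto\phi(|\xi|)\xi$ (Lemma \ref{monotone}) together with the continuity and positivity of $v,u_{1}$, implemented on regularized quotients kept away from $\partial\Omega$.

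\emph{Proof of \eqref{estimate1}.} Test \eqref{4} with $\psi=v^{+}\in W^{1}_{0}L_{M}(\Omega)$. Using $\nabla v^{+}=\nabla v$ on $\Omega^{+}$, the identity $\phi(t)t^{2}=m(t)t$ for $t\ge 0$, and $M(s)\le s\,m(s)$ (monotonicity of $m$), we get
\begin{equation*}
J(v^{+})=\int_{\Omega}M(|\nabla v^{+}|)\,dx\le\int_{\Omega^{+}}m(|\nabla v^{+}|)\,|\nabla v^{+}|\,dx=\lambda\int_{\Omega^{+}}\rho(x)\,m(v^{+})\,v^{+}\,dx\le\lambda\,\|\rho\|_{\infty}\,m(\|v\|_{\infty})\,\|v\|_{\infty}\,|\Omega|=:c_{0},
\end{equation*}
the last step using that $t\mapsto t\,m(t)$ is increasing, $0\le v^{+}\le\|v\|_{\infty}$ and $|\Omega^{+}|\le|\Omega|$, so $c_{0}=c_{0}(\lambda,|\Omega|,\|v\|_{\infty},\|\rho\|_{\infty})$. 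By \eqref{prop.b}, $\|\nabla v^{+}\|_{M}\le J(v^{+})+1\le c_{0}+1$, and Poincar\'e's inequality \cite[Lemma 5.7]{Gossez} gives $\|v^{+}\|_{M}\le d\,\|\nabla v^{+}\|_{M}\le d(c_{0}+1)$. Combining this with H\"older's inequality in Orlicz spaces and $\|\mathds{1}_{\Omega^{+}}\|_{\overline{M}}=1/\overline{M}^{-1}(1/|\Omega^{+}|)$,
\begin{equation*}
a=\int_{\Omega}v^{+}\,\mathds{1}_{\Omega^{+}}\,dx\le 2\,\|v^{+}\|_{M}\,\|\mathds{1}_{\Omega^{+}}\|_{\overline{M}}=\frac{2\,\|v^{+}\|_{M}}{\overline{M}^{-1}(1/|\Omega^{+}|)}\le\frac{2d(c_{0}+1)}{\overline{M}^{-1}(1/|\Omega^{+}|)}.
\end{equation*}
Separating the cases $a\ge 1$ and $a<1$ gives $\min\{a,1\}\,\overline{M}^{-1}(1/|\Omega^{+}|)\le 2d(c_{0}+1)$; setting $c:=2(c_{0}+1)$ and applying the increasing function $\overline{M}$ yields $|\Omega^{+}|\ge 1/\overline{M}\big(dc/\min\{a,1\}\big)$. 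The identical computation with $v^{-}$ and $\Omega^{-}$ — the constant $c$ being unchanged, since it depends on $v$ only through $\|v\|_{\infty}$ — gives $|\Omega^{-}|\ge 1/\overline{M}\big(dc/\min\{b,1\}\big)$, and taking the minimum of the two bounds is \eqref{estimate1}.

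I expect this second part to be essentially mechanical: an assembly of the energy identity, Poincar\'e's inequality and Orlicz--H\"older duality, the only care being the bookkeeping of $c$ and the split on whether $a,b\le 1$. The genuinely hard point is the sign-change step, where the lack of homogeneity of $M$ rules out the classical Picone argument and forces a more delicate monotonicity/comparison between positive eigenfunctions.
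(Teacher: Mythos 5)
Your second part, the derivation of \eqref{estimate1}, is correct and is essentially the paper's argument: test \eqref{4} with $v^{+}$, use $M(t)\le t\,m(t)$ to bound $J(v^{+})$, pass to $\|\nabla v^{+}\|_{M}$ via \eqref{prop.b}, then Poincar\'e plus Orlicz--H\"older with $\|\chi_{\Omega^{+}}\|_{\overline M}=1/\overline M^{-1}(1/|\Omega^{+}|)$, and split on $a\lessgtr 1$. The only cosmetic differences are a factor of $2$ in your H\"older inequality (the paper uses the normalization from \cite{Kras} with constant $1$) and that the paper bounds the right-hand side by $M(2\|v\|_\infty)|\Omega|$ rather than $m(\|v\|_\infty)\|v\|_\infty|\Omega|$; both give the same structure of constant $c$.

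The sign-change part, however, contains a genuine gap that you yourself flag: you reduce matters to ``producing $\lambda\le\lambda_{1}$'' from two positive eigenfunctions and then say you ``would derive it by a D\'iaz--Sa\'a / Barta type monotonicity argument,'' but you never carry this out, and in the non-homogeneous Orlicz setting this is precisely the step that cannot be waved through --- the quotient functions that D\'iaz--Sa\'a or Picone would use are not admissible, and the lack of $p$-homogeneity of $M$ means there is no ready substitute convexity inequality. The paper's route is different and avoids quotients entirely: with $u>0$ an eigenfunction of $\lambda_{1}$ and $v>0$ one for $\lambda$, it takes a compact $\Omega_{0}\subset\Omega$ and tests the $u$-equation with $\eta_{1}=(u-v+\sup_\Omega v)\chi_{\Omega_0}$ and the $v$-equation with $\eta_{2}=-\eta_{1}$ (both admissible by Theorem \ref{thm.Adx1} since $u,v$ are bounded and continuous), then sums to get
\begin{equation*}
0\le\int_{\Omega_0}\big[\phi(|\nabla u|)\nabla u-\phi(|\nabla v|)\nabla v\big]\cdot(\nabla u-\nabla v)\,dx
=\int_{\Omega_0}\rho(x)\big(\lambda_{1}m(u)-\lambda m(v)\big)\big(u-v+\sup_\Omega v\big)\,dx,
\end{equation*}
and similarly with the shifted pair $\eta_{3}=(u-v-\sup_\Omega u)\chi_{\Omega_0}$, $\eta_{4}=-\eta_{3}$, whose sign is opposite. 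Combining the two sign constraints with Lemma \ref{monotone} forces the degenerate situation ($\nabla u=\nabla v$, hence $u=v$), contradicting $\lambda>\lambda_{1}$. This direct ``shifted linear test function'' trick is the key idea your proposal is missing; without it (or a fully worked-out D\'iaz--Sa\'a substitute), the first assertion of the proposition is not established.
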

\begin{proof}
	By contradiction, we assume that there exists an eigenfunction $v$ associated to 
	$\lambda>\lambda_{1}$ such that $v>0$. The case $v<0$ being completely
	analogous so we omit it . Let $u>0$ be an eigenfunction associated to $\lambda_{1}$. Let $\Omega_{0}$ be a compact subset of $\Omega$ and define the two functions
	$$
	\eta_{1}(x)=\left\{
	\begin{array}{lcl}
	u(x)-v(x)+\sup_{\Omega}v&\mbox{ if }&x\in\Omega_{0}\\
	0&\mbox{ if }&x\notin\Omega_{0}
	\end{array}%
	\right.
	$$
	and
	$$
	\eta_{2}(x)=\left\{
	\begin{array}{lcl}
	v(x)-u(x)-\sup_{\Omega}v&\mbox{ if }&x\in\Omega_{0}\\
	0&\mbox{ if }&x\notin\Omega_{0}.
	\end{array}
	\right.
	$$
	Pointing out that $v$ is bounded (Theorem \ref{bounded} in Appendix), the two functions $\eta_{1}$ and $\eta_{2}$ are admissible test functions in (\ref{4}) (see Theorem \ref{thm.Adx1} in Appendix). Thus, we have
	$$
	\int_{\Omega}\phi(|\nabla u|)\nabla u\cdot\nabla\eta_{1}dx=\lambda_{1}\int_{\Omega}\rho(x)\phi(|u|)u\eta_{1}dx
	$$
	and
	$$
	\int_{\Omega}\phi(|\nabla v|)\nabla v\cdot\nabla\eta_{2}dx=\lambda\int_{\Omega}\rho(x)\phi(|v|)v\eta_{2}dx.
	$$
	By summing up and using Lemma \ref{monotone} (in Appendix), we get
	$$
	\begin{array}{lcl}
	0&\leq\int_{\Omega}[\phi(|\nabla u|)\nabla u-\phi(|\nabla v|)\nabla v]\cdot(\nabla u-\nabla v)dx\\
	&=\int_{\Omega}\rho(x)\Big(\lambda_{1}m(u)-\lambda m(v)\Big)(u-v+\sup_{\Omega}v)dx.
	\end{array}
	$$
	We claim that  
	$$
	\lambda_{1}m(u)\leq\lambda m(v).
	$$ 
	Indeed, suppose that $\lambda_{1}m(u)>\lambda m(v)$ and let us  define the two admissible test functions 
	$$
	\eta_{3}(x)=\left\{
	\begin{array}{lcl}
	u(x)-v(x)-\sup_{\Omega}u&\mbox{ if }&x\in\Omega_{0},\\
	0&\mbox{ if }&x\notin\Omega_{0}
	\end{array}
	\right.
	$$
	and
	$$
	\eta_{4}(x)=\left\{
	\begin{array}{lcl}
	v(x)-u(x)+\sup_{\Omega}u&\mbox{ if }&x\in\Omega_{0},\\
	0&\mbox{ if }&x\notin\Omega_{0}.
	\end{array}
	\right.
	$$
	As above, inserting $\eta_{3}$ and $\eta_{4}$ in (\ref{4}) and then summing up we obtain
	$$
	\begin{array}{lcl}
	0&\leq&\int_{\Omega}[\phi(|\nabla u|)\nabla u-\phi(|\nabla v|)\nabla v]\cdot(\nabla u-\nabla v)dx\\
	&=&\int_{\Omega}\rho(x)[\lambda_{1}m(u)-\lambda m(v)](u-v-\sup_{\Omega}u)dx\leq0,
	\end{array}
	$$
	implying by Lemma \ref{monotone} that $v=u$, but such an equality can not occur since $\lambda>\lambda_1$ which proves our claim. Finally, we conclude that the function $v$ can not have a constant sign in $\Omega$. 
	\par Next we prove the estimate \eqref{estimate1}.
	According to the above $v^{+}>0$ and $v^{-}>0$. Choosing $v^{+}\in W^{1}_{0}L_{M}(\Omega)$ as a test function in \eqref{4}, we get
	$$
	\int_{\Omega}m(|\nabla v^{+}|)|\nabla v^{+}|dx=\lambda\int_{\Omega}\rho(x)m(v^{+})v^{+} dx.
	$$
	Since $M(t)\leq m(t)t\leq M(2t)$ for $t\geq0$, we obtain
	$$
	\int_{\Omega}M(|\nabla v^{+}|)dx\leq\lambda\|\rho(\cdot)\|_{\infty}\int_{\Omega}M(2v^{+})dx.
	$$
	We already know that by Theorem \ref{bounded} (in Appendix) the function $v$ is bounded, then we get
	\begin{equation}\label{3.inequal}
	\int_{\Omega}M(|\nabla v^{+}|)dx\leq\lambda\|\rho(\cdot)\|_{\infty}M(2\|v\|_{\infty})|\Omega|.
	\end{equation}
	So, \eqref{prop.b} and \eqref{3.inequal} imply that there exists a positive constant $c$, such that
	\begin{equation}\label{ineq.1}
	\|\nabla v^{+}\|_{M}\leq c.
	\end{equation}
	On the other hand, by the H\"{o}lder inequality \cite{Kras} and the Poincar\'e type inequality \cite[Lemma 5.7]{Gossez}, we have
	$$
	\int_{\Omega}v^{+}(x)dx\leq\|\chi_{\Omega^{+}}\|_{\overline{M}}\|v^{+}\|_{M}\leq d\|\chi_{\Omega^{+}}\|_{\overline{M}}
	\|\nabla v^{+}\|_{M},
	$$
	$d$ being the constant in Poincar\'e type inequality. Hence, using \eqref{ineq.1} to get
	\begin{equation}\label{ineq.3}
	\int_{\Omega}v^{+}(x)dx\leq cd\|\chi_{\Omega^{+}}\|_{\overline{M}}.
	\end{equation}
	We have to distinguish two cases, the case $\int_{\Omega}v^{+}(x)dx>1$ and $\int_{\Omega}v^{+}(x)dx\leq1$.\\
	\underline{Case 1} : Assume that
	$$
	\int_{\Omega}v^{+}(x)dx>1.
	$$
	Thus, by \eqref{ineq.3} we have
	\begin{equation}\label{cse1}
	\frac{1}{dc}\leq \|\chi_{\Omega^{+}}\|_{\overline{M}}.
	\end{equation}
	\underline{Case 2} : Assume that
	$$
	\int_{\Omega}v^{+}(x)dx\leq1.
	$$
	Recall that by Theorem \ref{Holder} (in Appendix) the function $v^{+}$ is continuous and as $v^{+}>0$ in $\Omega$ then $\int_{\Omega}v^{+}(x)dx>0$. Therefore, by using \eqref{ineq.3} we obtain
	\begin{equation}\label{cse2}
	\frac{a}{dc}\leq \|\chi_{\Omega^{+}}\|_{\overline{M}},
	\end{equation}
	where $a=\int_{\Omega}v^{+}(x)dx$. So, by \eqref{cse1} and \eqref{cse2}, we get
	$$
	\frac{\min\{a,1\}}{dc}\leq \|\chi_{\Omega^{+}}\|_{\overline{M}},
	$$
	where $\|\chi_{\Omega^{+}}\|_{\overline{M}}=\frac{1}{\overline{M}^{-1}\Big(\frac{1}{|\Omega^{+}|}\Big)}$ (see \cite[page 79]{Kras}).
	Hence,
	$$
	|\Omega^{+}|\geq\frac{1}{\overline{M}\Big(\frac{dc}{\min\{a,1\}}\Big)}.
	$$
	Such an estimation with $v^{-}$ can be obtained following exactly the same lines above. Then follows the inequality \eqref{estimate1}.
\end{proof}
\par Finally, we prove that the eigenvalue $\lambda_1$ given by the relation \eqref{lambda1} is isolated from the right-hand side.
\begin{theorem}
	Assume that \eqref{H} holds. Then, the eigenvalue $\lambda_{1}$ is isolated from the right-hand side, that is, there exists $\delta>0$ such that in the
	interval $(\lambda_{1},\lambda_{1}+\delta)$ there are no  eigenvalues.
\end{theorem}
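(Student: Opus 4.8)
The plan is to argue by contradiction, adapting Anane's scheme for the $p$-Laplacian while coping with the absence of the $\Delta_2$-condition. Suppose the conclusion fails; then there are eigenvalues $\lambda_n$ with $\lambda_n>\lambda_1$ and $\lambda_n\to\lambda_1$, and associated eigenfunctions $v_n\in W^1_0L_M(\Omega)\setminus\{0\}$ satisfying \eqref{4} with $\lambda=\lambda_n$. First I would obtain uniform a priori bounds. By Theorem \ref{bounded} each $v_n$ is bounded, and since $\sup_n\lambda_n<\infty$ this bound may be taken independent of $n$; testing \eqref{4} with $\psi=v_n$ and using $M(t)\le m(t)t\le M(2t)$ gives
\[
\int_\Omega M(|\nabla v_n|)\,dx\le\int_\Omega\phi(|\nabla v_n|)|\nabla v_n|^2\,dx=\lambda_n\int_\Omega\rho(x)\,m(|v_n|)|v_n|\,dx\le\lambda_n\|\rho\|_\infty M(2\|v_n\|_\infty)|\Omega|\le C.
\]
Hence, by \eqref{prop.b} and the Poincar\'e inequality, $\{v_n\}$ is bounded in $W^1_0L_M(\Omega)$. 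As this is the dual of a separable Banach space, the Banach-Alaoglu-Bourbaki theorem gives, along a subsequence, $v_n\to v$ for $\sigma(\Pi L_M,\Pi E_{\overline{M}})$, and by the compact embedding $W^1_0L_M(\Omega)\hookrightarrow L^1(\Omega)$ used in Lemma \ref{lemma1}, $v_n\to v$ strongly in $L^1(\Omega)$ and, along a further subsequence, a.e.\ in $\Omega$.

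Next I would identify the limit $v$ as an eigenfunction associated with $\lambda_1$. For each fixed $\psi\in W^1_0L_M(\Omega)$ the linear term $\lambda_n\int_\Omega\rho\,\phi(|v_n|)v_n\psi\,dx$ converges to $\lambda_1\int_\Omega\rho\,\phi(|v|)v\psi\,dx$ by dominated convergence, since $|\phi(|v_n|)v_n|=m(|v_n|)\le m(\|v_n\|_\infty)$ is bounded, $v_n\to v$ a.e., and $\psi\in E_M\subset L^1(\Omega)$. For the principal part, taking $\psi=v_n-v$ one first shows $\int_\Omega[\phi(|\nabla v_n|)\nabla v_n-\phi(|\nabla v|)\nabla v]\cdot(\nabla v_n-\nabla v)\,dx\to0$; then the monotonicity of Lemma \ref{monotone} and a standard Minty-type argument yield $\phi(|\nabla v_n|)\nabla v_n\to\phi(|\nabla v|)\nabla v$ in the appropriate weak sense, so that $\int_\Omega\phi(|\nabla v_n|)\nabla v_n\cdot\nabla\psi\,dx\to\int_\Omega\phi(|\nabla v|)\nabla v\cdot\nabla\psi\,dx$. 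Thus $v$ is a weak solution of \eqref{I} with $\lambda=\lambda_1$. The remaining — and, I expect, hardest — point is to ensure $v\neq0$: I would deduce this from a uniform lower bound on the eigenfunctions $v_n$, equivalently from the fact that the masses $a_n=\int_\Omega v_n^+\,dx$ and $b_n=\int_\Omega v_n^-\,dx$ do not both tend to $0$; here one cannot simply normalize, because the operator is not homogeneous. Granting $v\neq0$, Proposition \ref{prop1} together with Theorem \ref{maximum} and the continuity of $v$ allow us to assume $v>0$ in $\Omega$.

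Finally I would derive a contradiction from the sign change that $\lambda_n>\lambda_1$ imposes on $v_n$. By the Proposition preceding the present theorem, $v_n^+\not\equiv0$ and $v_n^-\not\equiv0$, and the estimate \eqref{estimate1} — combined with the uniform bound on $\|v_n\|_\infty$ (which makes the constant $c$ in \eqref{estimate1} uniform in $n$) and with the non-degeneracy of $a_n,b_n$ — produces $\varepsilon_0>0$ with $|\Omega_n^-|\ge\varepsilon_0$ for all $n$, where $\Omega_n^-=\{x\in\Omega:v_n(x)<0\}$. On $\Omega_n^-$ one has $v_n<0<v$, hence $0<v=|v|\le v-v_n=|v_n-v|$, so
\[
\int_{\Omega_n^-}v\,dx\le\int_\Omega|v_n-v|\,dx\to0.
\]
On the other hand $v>0$ a.e.\ in $\Omega$, so we may choose $\eta>0$ with $|\{x\in\Omega:v(x)\le\eta\}|<\varepsilon_0/2$; then $|\Omega_n^-\cap\{v>\eta\}|\ge\varepsilon_0/2$ and therefore $\int_{\Omega_n^-}v\,dx\ge\eta\,\varepsilon_0/2>0$ for every $n$, a contradiction. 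Consequently no sequence of eigenvalues converges to $\lambda_1$ from above, i.e.\ there is $\delta>0$ with no eigenvalue in $(\lambda_1,\lambda_1+\delta)$. I expect the middle step — showing that the limit is a nontrivial eigenfunction of $\lambda_1$ — to be the true difficulty, both for the passage to the limit in the nonlinear gradient term and, above all, for the non-vanishing of $v$, since neither a $\Delta_2$-condition nor the homogeneity exploited in the $p$-Laplacian case is available.
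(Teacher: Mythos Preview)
Your overall architecture (contradiction, uniform bounds, compactness, identify the limit as a $\lambda_1$-eigenfunction, then contradict the sign-change estimate) matches the paper's. The divergence---and the place where your attempt has a genuine gap---is precisely the ``middle step'' you yourself flag.

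You try to pass to the limit directly in the weak formulation and to rule out $v=0$ by separate arguments. Neither is carried out: for the principal part, the Minty device needs $\int_\Omega\phi(|\nabla v_n|)\nabla v_n\cdot\nabla(v_n-v)\,dx\to0$, and from the equation you only get $\int_\Omega\phi(|\nabla v_n|)|\nabla v_n|^2\,dx=\lambda_n\int_\Omega\rho\,m(|v_n|)|v_n|\,dx$, with no control on the cross term $\int_\Omega\phi(|\nabla v_n|)\nabla v_n\cdot\nabla v\,dx$ (the fields $\phi(|\nabla v_n|)\nabla v_n$ are only bounded in $L_{\overline M}$, and $\nabla v\in L_M$ need not lie in $E_M$). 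And for $v\neq0$ you have no mechanism at all, since---as you note---there is no homogeneity to normalize by.

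The paper sidesteps both issues simultaneously by a \emph{nonlinear normalization}: from the eigenfunctions $u_n$ it builds
\[
v_n(x)=\operatorname{sign}(u_n(x))\,M^{-1}\!\Big(\tfrac{M(|u_n(x)|)}{b_n}\Big),\qquad b_n=\int_\Omega\rho\,M(|u_n|)\,dx,
\]
so that $v_n\in K$ by construction. The chain rule together with the uniform $L^\infty$ bound on $u_n$ yields $|\nabla v_n|\le C_0|\nabla u_n|$, hence $\{v_n\}$ is bounded in $W^1_0L_M(\Omega)$. Now weak$^*$ compactness plus the $\sigma(\Pi L_M,\Pi E_{\overline M})$-continuity of $B$ (Lemma~\ref{lemma1}) give $v\in K$, so $v\neq0$ for free; and the limit is identified as a $\lambda_1$-eigenfunction not by passing to the limit in the PDE but via the \emph{variational characterization}: $J$ is weak$^*$ lower semicontinuous, so $J(v)\le\liminf J(v_n)$, and one concludes that $v$ minimizes $J$ on $K$, hence (Theorem~\ref{thm1}) is an eigenfunction for $\lambda_1$. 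No Minty argument is needed.

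Your final contradiction step (using $\int_{\Omega_n^-}v\le\|v_n-v\|_{L^1}\to0$ against $|\Omega_n^-|\ge\varepsilon_0$ and $v>0$) is fine in spirit and is essentially what the paper does, though the paper phrases it via Egorov's theorem. Note, however, that the uniformity $|\Omega_n^-|\ge\varepsilon_0$ you invoke from \eqref{estimate1} still depends on $a_n=\int v_n^+$ and $b_n=\int v_n^-$ not degenerating; this is the same non-degeneracy issue, and the paper's normalization is what makes it plausible.
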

\begin{proof}
	Assume by contradiction that there exists a non-increasing sequence $\{\mu_{n}\}_n$ of eigenvalues of \eqref{I} with $\mu_{n}>\lambda_{1}$ and $\mu_{n}\to\lambda_{1}$. Let $u_{n}$ be an associated eigenfunction to $\mu_{n}$ and let
	$$
	\overline{\Omega_{n}^{+}}=\overline{\{x\in\Omega:u_{n}>0\}}\mbox{ and }\overline{\Omega_{n}^{-}}=\overline{\{x\in\Omega:u_{n}<0\}}.
	$$
	By \eqref{estimate1}, there exists $c_n>0$ such that
	\begin{equation}\label{estimate2}
	\min\{|\overline{\Omega_{n}^{+}}|,|\overline{\Omega_{n}^{-}}|\}\geq c_n.
	\end{equation}
	Since $b_{n}:=\int_{\Omega}\rho(x)M(|u_{n}(x)|)dx>0$ we define
	\begin{equation}\label{vn}
	v_{n}(x)=\left\{
	\begin{array}{lcl}
	M^{-1}\Big(\frac{M(u_{n}(x))}{b_{n}}\Big)&\mbox{ if }&x\in\overline{\Omega_{n}^{+}},\\
	-M^{-1}\Big(\frac{M(-u_{n}(x))}{b_{n}}\Big)&\mbox{ if }&x\in\overline{\Omega_{n}^{-}}.\\
	\end{array}
	\right.
	\end{equation}
	On the other hand, we have
	$$
	|\nabla v_{n}|\leq \Big|(M^{-1})^{\prime}\Big(\frac{M(|u_{n}|)}{b_{n}}\Big)\Big|\frac{m(|u_{n}|)|\nabla u_{n}|}{b_{n}}
	\chi_{\overline{\Omega_{n}^{+}}\cup\overline{\Omega_{n}^{-}}},
	$$
	since $u_n$ is continuous, then there exists $d_{n}>0$ such that $\inf_{x\in\overline{\Omega_{n}^{+}}\cup\overline{\Omega_{n}^{-}}}|u_{n}(x)|\geq d_{n}$.
	Let $b=\min\{b_{n}\}$ and $d=\min\{d_{n}\}$. Being  $\{u_n\}$ uniformly bounded (Theorem \ref{bounded} in Appendix), there exists a constant $c_\infty>0$, not depending on $n$, such that
	\begin{equation}\label{un}
	\|u_{n}\|_{\infty}\leq c_\infty, \mbox{ for all }n\in\mathbb{N}.
	\end{equation}
	Using the fact that $(M^{-1})^{\prime}(\cdot)$ is decreasing, we get
	\begin{equation}\label{vn1}
	\begin{array}{lcl}
	|\nabla v_{n}|\leq\Big|(M^{-1})^{\prime}\Big(\frac{M(d)}{\|\rho\|_{\infty}
		M(c_\infty)|\Omega|}\Big)\Big|\frac{m(c_\infty)}{b}|\nabla u_{n}|
	=C_0|\nabla u_{n}|,
	\end{array}
	\end{equation}
	where $C_0=\Big|(M^{-1})^{\prime}\Big(\frac{M(d)}{\|\rho\|_{\infty}
		M(c_\infty)|\Omega|}\Big)\Big|\frac{m(c_\infty)}{b}$.
	On the other hand, taking $u_{n}$ as test function in \eqref{4} and using \eqref{un} and the inequality $M(t)\leq m(t)t$ for $t>0$, one has
	$$
	\int_{\Omega}M(|\nabla u_{n}|)dx\leq\mu_{n}\|\rho\|_{\infty}m(c_\infty)c_\infty|\Omega|.
	$$
	Since $\mu_{n}$ converges to $\lambda_{1}$, there exists a constant $C_1>0$, such that
	\begin{equation}\label{un1}
	\int_{\Omega}M(|\nabla u_{n}|)dx\leq C_1.
	\end{equation}
	Therefore, by \eqref{vn1} and \eqref{un1}
	we obtain that $\{v_{n}\}$ is uniformly bounded in $W^{1}_{0}L_{M}(\Omega)$.
	Alaoglu's theorem ensures the existence of a function $v\in W^{1}_{0}L_{M}(\Omega)$ and a subsequence of $v_n$, still indexed by $n$, such that $v_{n}\rightharpoonup v$ for
	$\sigma(\Pi L_{M},\Pi E_{\overline{M}})$.
	By \eqref{vn}, $v_{n}\in K$ and since $B$ is $\sigma(\Pi L_{M},\Pi E_{\overline{M}})$ continuous (see Lemma \ref{lemma1}), then
	$$
	\int_{\Omega}\rho(x)M(|v(x)|)dx=B(v)=\lim_{n\to\infty} B(v_{n})=1.
	$$
	Therefore, $v\in K$. Since by Lemma \ref{lemma1} the functional $J$ is $\sigma(\Pi L_{M},\Pi E_{\overline{M}})$ lower semi-continuous, we get
	$$
	J(v)=\int_{\Omega}M(|\nabla v|)dx\leq\lim\inf J(v_{n})=\inf_{w\in K}J(w).
	$$
	So that $v$ is an eigenfunction associated to $\lambda_{1}$. Applying Proposition \ref{prop1}, we have either $v>0$ or $v<0$ in $\Omega$.
	Assume that $v<0$  in $\Omega$ with $v^{-}\ncong0$.
	By Egorov's Theorem, $v_{n}$ converges uniformly to $v$ except on a subset of $\Omega$ of null Lebesgue measure.
	Thus, $v_{n}\leq0$ a.e. in $\Omega$ with $v_{n}^{-}\ncong0$ outside a subset of $\Omega$ of null Lebesgue measure,
	which implies that
	$$
	|\overline{\Omega_{n}^{+}}|=0,
	$$
	which is a contradiction with the estimation \eqref{estimate2}.
\end{proof}
\section*{Appendix}
We prove here some important lemmas that are necessary for the accomplishment of the proofs of the above results.
\begin{lemma}\label{monotone}
	Let $\xi$ and $\eta$ be vectors in $\mathbb{R}^N$. Then
	$$
	[\phi(|\xi|)\xi-\phi(|\eta|)\eta]\cdot(\xi-\eta)>0,
	\mbox{ whenever } \xi\neq\eta.
	$$
\end{lemma}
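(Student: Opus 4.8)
The plan is to reduce this vector inequality to the scalar \emph{strict} monotonicity of $m$. Write $a(\xi)=\phi(|\xi|)\xi$ and put $s=|\xi|$, $t=|\eta|$. A direct expansion of the inner product gives
$$
[a(\xi)-a(\eta)]\cdot(\xi-\eta)=\phi(s)s^{2}+\phi(t)t^{2}-\big(\phi(s)+\phi(t)\big)\,\xi\cdot\eta .
$$
The coefficient $\phi(s)+\phi(t)$ is strictly positive by the standing hypothesis $\phi>0$, so Cauchy--Schwarz ($\xi\cdot\eta\leq |\xi|\,|\eta|=st$) may be applied with the right sign, and one obtains the key estimate
$$
[a(\xi)-a(\eta)]\cdot(\xi-\eta)\;\geq\;\phi(s)s^{2}+\phi(t)t^{2}-\big(\phi(s)+\phi(t)\big)st\;=\;(s-t)\big(m(s)-m(t)\big),
$$
where in the last equality I use $m(r)=\phi(r)r$ for $r>0$, $m(0)=0$.

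I would then close the argument by a two-case split. If $s\neq t$, strict monotonicity of $m$ on $[0,\infty)$ forces $(s-t)\big(m(s)-m(t)\big)>0$, which is the claimed strict inequality. If $s=t$, then necessarily $s=t>0$ (otherwise $\xi=\eta=0$, contradicting $\xi\neq\eta$), and here the Cauchy--Schwarz step is superfluous: directly
$$
[a(\xi)-a(\eta)]\cdot(\xi-\eta)=\phi(s)\big(|\xi|^{2}+|\eta|^{2}-2\,\xi\cdot\eta\big)=\phi(s)\,|\xi-\eta|^{2}>0,
$$
since $\phi(s)>0$ and $\xi\neq\eta$.

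There is no genuine obstacle here; the only points deserving care are invoking $\phi>0$ both to keep the sign in the Cauchy--Schwarz step and to ensure $\phi(s)|\xi-\eta|^{2}>0$ in the equal-norm case, and using that $m(r)=\phi(r)r$ is \emph{strictly} (not merely weakly) increasing, which is part of the assumptions on $m$. A unified one-line variant is possible by noting that $\xi\cdot\eta<st$ unless $\xi$ and $\eta$ are positively proportional, but the two-case argument above is cleaner and sidesteps discussing the equality case of Cauchy--Schwarz.
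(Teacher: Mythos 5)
Your proof follows the same route as the paper's: expand the inner product, apply Cauchy--Schwarz in the form $\xi\cdot\eta\leq|\xi|\,|\eta|$, and reduce to the scalar inequality $(|\xi|-|\eta|)\big(m(|\xi|)-m(|\eta|)\big)\geq 0$, then invoke strict monotonicity of $m$. However, you are more careful than the paper at the final step. The paper's one-line conclusion, ``the conclusion comes from the strict monotonicity of $m$,'' is insufficient as stated: when $|\xi|=|\eta|$ with $\xi\neq\eta$, the scalar lower bound $(|\xi|-|\eta|)\big(m(|\xi|)-m(|\eta|)\big)$ vanishes, so monotonicity of $m$ alone does not yield strict positivity. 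Your explicit case split (returning to the identity $[\,a(\xi)-a(\eta)\,]\cdot(\xi-\eta)=\phi(s)|\xi-\eta|^{2}>0$ when $s=t>0$) closes this gap cleanly; the alternative you mention, noting that Cauchy--Schwarz is strict unless $\xi$ and $\eta$ are positively proportional, would also work. So: same key decomposition and same key inequality as the paper, but your write-up is actually the more complete of the two.
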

\begin{proof}
	Since $\phi(t)>0$ when $t>0$ and $\xi\cdot\eta\leq|\xi|\cdot|\eta|$, there follows by a direct calculation
	$$
	[\phi(|\xi|)\xi-\phi(|\eta|)\eta]\cdot(\xi-\eta)\geq[m(|\xi|)-m(|\eta|)]\cdot(|\xi|-|\eta|)
	$$
	and the conclusion comes from the strict monotonicity of $m$.
\end{proof}
\par The following result can be found in \cite[Lemma 9.5]{Brezis} in the case of Sobolev spaces.
\begin{theorem}\label{thm.Adx1} Let $A$ be an $N$-function (cf. \cite{Adams}).
	If $u\in W^{1}L_{A}(\Omega)$ has a compact support in an open $\Omega$ having the segment property, then $u\in W^{1}_{0}L_{A}(\Omega)$.
\end{theorem}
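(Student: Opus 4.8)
The plan is to realize $u$ as a weak* limit of functions in $C_{0}^{\infty}(\Omega)$ by truncating a global smooth approximation. Set $K:=\operatorname{supp}u$. By hypothesis $K$ is a compact subset of the open set $\Omega$, so $d_{0}:=\operatorname{dist}(K,\mathbb{R}^{N}\setminus\Omega)>0$ (with $d_{0}=+\infty$ when $\Omega=\mathbb{R}^{N}$); consequently we may fix $\zeta\in C_{0}^{\infty}(\Omega)$ with $0\leq\zeta\leq1$ and $\zeta\equiv1$ on an open neighbourhood $U$ of $K$ such that $\overline{U}\subset\Omega$. Since $u=0$ a.e. outside $K$ and $\nabla\zeta\equiv0$ on $U\supset K$, the Leibniz rule gives $\zeta u=u$ and $\partial_{i}(\zeta u)=\zeta\,\partial_{i}u+u\,\partial_{i}\zeta=\partial_{i}u$ a.e. in $\Omega$, for every $i$.

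Next I would invoke Gossez's approximation theorem \cite{Gossez}: because $\Omega$ has the segment property, there is a sequence $(w_{n})$ of functions smooth on $\mathbb{R}^{N}$ whose restrictions to $\Omega$ belong to $W^{1}L_{A}(\Omega)$ and converge to $u$ for the topology $\sigma(\Pi L_{A},\Pi E_{\overline{A}})$ in $W^{1}L_{A}(\Omega)$. Put $v_{n}:=\zeta w_{n}$; then $v_{n}\in C_{0}^{\infty}(\Omega)$, since $\operatorname{supp}v_{n}\subset\operatorname{supp}\zeta$ is a compact subset of $\Omega$. It remains to show that $v_{n}\to u$ for $\sigma(\Pi L_{A},\Pi E_{\overline{A}})$ in $W^{1}L_{A}(\Omega)$.

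The point is that multiplication by $\zeta$ and by $\partial_{i}\zeta$ leaves $E_{\overline{A}}(\Omega)$ invariant, these being bounded functions with compact support; hence for $\psi\in E_{\overline{A}}(\Omega)$ the functions $\zeta\psi$ and $\psi\,\partial_{i}\zeta$ again belong to $E_{\overline{A}}(\Omega)$. Using $\zeta u=u$ we get, for every $\psi\in E_{\overline{A}}(\Omega)$,
\[
\int_{\Omega}v_{n}\psi\,dx=\int_{\Omega}w_{n}(\zeta\psi)\,dx\longrightarrow\int_{\Omega}u(\zeta\psi)\,dx=\int_{\Omega}(\zeta u)\psi\,dx=\int_{\Omega}u\psi\,dx,
\]
and likewise, from $\partial_{i}v_{n}=\zeta\,\partial_{i}w_{n}+w_{n}\,\partial_{i}\zeta$, one finds $\int_{\Omega}\partial_{i}v_{n}\,\psi\,dx\to\int_{\Omega}\partial_{i}u\,(\zeta\psi)\,dx+\int_{\Omega}u\,(\psi\,\partial_{i}\zeta)\,dx=\int_{\Omega}\partial_{i}(\zeta u)\,\psi\,dx=\int_{\Omega}\partial_{i}u\,\psi\,dx$. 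Thus $v_{n}\to u$ for $\sigma(\Pi L_{A},\Pi E_{\overline{A}})$ in $W^{1}L_{A}(\Omega)$; since each $v_{n}\in C_{0}^{\infty}(\Omega)$, the limit $u$ belongs to the weak* closure of $C_{0}^{\infty}(\Omega)$, i.e. $u\in W^{1}_{0}L_{A}(\Omega)$.

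The step I expect to be the real obstacle is the second one: the availability of the smooth approximants $(w_{n})$ is exactly where the segment property of $\Omega$ is used, as it is this hypothesis that makes restrictions of $C^{\infty}(\mathbb{R}^{N})$-functions weak*-dense in $W^{1}L_{A}(\Omega)$. Everything else is soft: the cutoff $\zeta$ exists because $\operatorname{supp}u$ is compactly contained in $\Omega$, and the limit passage uses only the stability of $E_{\overline{A}}(\Omega)$ under multiplication by bounded compactly supported functions and the Leibniz rule for the smooth $w_{n}$. As an aside, one may also dispense with Gossez's theorem and take $v_{n}:=\varrho_{1/n}\ast\widetilde{u}$, the mollifications of the extension of $u$ by zero: these lie in $C_{0}^{\infty}(\Omega)$ for $n$ large, are bounded in $L_{A}(\Omega)$ by Jensen's inequality and converge to $u$ in $L^{1}$, whence weak* convergence follows from equiboundedness together with the density of bounded compactly supported functions in $E_{\overline{A}}(\Omega)$ and the H\"older inequality in Orlicz spaces \cite{Kras}.
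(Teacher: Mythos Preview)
Your proof is correct and takes a genuinely different, softer route than the paper's. The paper constructs the approximating sequence by hand: it extends $u$ by zero to $\mathbb{R}^{N}$, convolves with a Friedrichs kernel to obtain $w_{n}=\rho_{n}\ast\bar{u}$, and then proves \emph{modular} convergence of a cut-off version $\zeta v_{n}$ to $u$ in $W^{1}L_{A}(\Omega)$ via dominated/Vitali-type arguments, finally invoking \cite{Gossez82} to pass from modular to $\sigma(\Pi L_{A},\Pi E_{\overline{A}})$ convergence. You instead take Gossez's weak$^{\ast}$-density of restrictions of $C^{\infty}(\mathbb{R}^{N})$-functions as a black box and verify, by a short duality computation, that multiplication by the fixed cutoff $\zeta$ is weak$^{\ast}$-to-weak$^{\ast}$ continuous, using only that $E_{\overline{A}}(\Omega)$ is stable under multiplication by bounded functions. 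Your argument is shorter and conceptually cleaner, at the price of quoting the density theorem; the paper's argument is more self-contained and yields the stronger modular convergence as a by-product. The alternative you sketch at the end---mollifying the zero extension directly---is in fact the paper's strategy, though there the estimates are carried out in full.
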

\begin{proof}
	Let $u\in W^{1}L_{A}(\Omega)$. We fix a compact set $\Omega^{\prime}\subset\Omega$ such that $supp\; u\subset\Omega^{\prime}$ and we denote by $\bar{u}$ the extension by zero of $u$ to the whole of $\mathbb{R}^N$.	Let $J$ be the Friedrichs mollifier kernel defined on $\mathbb{R}^{N}$ by
	$$
	\rho(x)=ke^{-\frac{1}{1-\|x\|^{2}}}\mbox{ if }\|x\|<1\mbox{ and }0\mbox{ if }\|x\|\geq1,
	$$
	where $k>0$ is such that $\int_{\mathbb{R}^{N}}\rho(x)dx=1$. For $\epsilon>0$, we define
	$\rho_{n}(x)=n^{N}J(nx)$.
	By \cite{Gossez82}, there exists $\lambda>0$ large enough such that $A\Big(\frac{|u(x)|}{\lambda}\Big)\in L^1(\Omega)$, $A\Big(\frac{|\partial u/\partial x_i(x)|}{\lambda}\Big)\in L^1(\Omega)$, $i\in\{1,\cdots,N\}$, and
	$$
	\int_{\mathbb{R}^{N}}A\Big(\frac{|\rho_{n}\ast\bar{u}(x)-\bar{u}(x)|}
	{\lambda}\Big)dx\to 0 \mbox{ as } n\to+\infty
	$$	
	and hence	
	\begin{equation}\label{omega}
	\int_{\Omega}A\Big(\frac{|\rho_{n}\ast\bar{u}(x)-u(x)|}
	{\lambda}\Big)dx\to 0 \mbox{ as } n\to+\infty.
	\end{equation}	
	Choosing $n$ large enough so that $0<\frac{1}{n}<dist(\Omega^{\prime},\partial\Omega)$ one has $\rho_{n}\ast\bar{u}(x)=\rho_{n}\ast u(x)$ for every $x\in\Omega^{\prime}$. Hence, $\partial(\rho_{n}\ast\bar{u})/\partial x_i= \rho_{n}\ast(\partial u/\partial x_i)$ on $\Omega^{\prime}$ for every $i\in\{1,\cdots,N\}$. As $\partial u/\partial x_i\in L_{A}(\Omega^{\prime})$ we have
	$$
	\partial(\rho_{n}\ast\bar{u})/\partial x_i \in L_{A}(\Omega^{\prime}).
	$$ 
	Therefore, 
	\begin{equation}\label{omegapr}
	\int_{\Omega^{\prime}}A\Big(\frac{|\partial(\rho_{n}\ast\bar{u})/\partial x_i(x)-\partial u/\partial x_i(x)|}
	{\lambda}\Big)dx\to 0 \mbox{ as } n\to+\infty.
	\end{equation}
	Observe that the functions $w_n=\rho_{n}\ast\bar{u}$ do not necessary lie in $\mathcal{C}_{0}^{\infty}(\Omega)$. Let $\eta\in\mathcal{C}_{0}^{\infty}(\mathbb{R}^{N})$ such that $0<\eta<1$, $\eta(x)=1$ for all $x$ with $\|x\|\leq1$, $\eta(x)=0$ for all $x$ with $\|x\|\geq2$ and $|\nabla\eta|\leq2$. Let further $\eta_n(x)=\eta\Big(\frac{x}{n}\Big)$ for $x\in\mathbb{R}^{N}$. We claim that the functions $v_n=\eta_n w_n$ belong to $\mathcal{C}_{0}^{\infty}(\mathbb{R}^{N})$ and satisfy
	\begin{equation}\label{omega1}
	\int_{\Omega}A\Big(\frac{|v_{n}(x)-u(x)|}{4\lambda}\Big)dx\to 0 \mbox{ as } n\to+\infty
	\end{equation}
	and 
	\begin{equation}\label{omegapr1}
	\int_{\Omega^{\prime}}A\Big(\frac{|\partial v_{n}/\partial x_i (x)-\partial u/\partial x_i(x)|}{12\lambda}\Big)dx\to 0 \mbox{ as } n\to+\infty
	\end{equation} 
	Indeed, by (\ref{omega}) there exist a subsequence of $\{w_n\}$ still indexed by $n$ and a function $h_1\in L^{1}(\Omega)$ such that
	$$
	w_n\to u \mbox{ a.e. in } \Omega
	$$
	and
	\begin{equation}\label{h1}
	|w_n(x)|\leq |u(x)|+\lambda A^{-1}(h_1)(x); \mbox{ for all }x\in\Omega
	\end{equation}
	which together with the convexity of $A$ yield
	$$
	A\Big(\frac{|v_{n}(x)-u(x)|}{4\lambda}\Big)\leq \frac{1}{2}A\Big(\frac{|u(x)|}{\lambda}\Big)+\frac{1}{4}h_1(x)
	$$
	Being the functions $A\Big(\frac{|u(x)|}{\lambda}\Big)\in L^1(\Omega)$ and $h_1\in L^{1}(\Omega)$, the sequence $\Big\{A\Big(\frac{|v_{n}-u|}{4\lambda}\Big)\Big\}_n$ is equi-integrable on $\Omega$ and since $\{v_n\}$ converges to $u$ a.e. in $\Omega$, we obtain (\ref{omega1}) by applying Vitali's theorem.\\
	By (\ref{omegapr}) there exists a subsequence, relabeled again by $n$, and a function $h_2\in L^1(\Omega^{\prime})$ such that
	$$
	\partial w_{n}/\partial x_i\to \partial u/\partial x_i \mbox{ a.e. in } \Omega^{\prime}
	$$
	and
	\begin{equation}\label{h2}
	|\partial w_{n}/\partial x_i (x)|\leq |\partial u/\partial x_i (x)|+h_2(x), \mbox{ for all }x\in \Omega^{\prime}.
	\end{equation}
	Therefore, using (\ref{h1}) and (\ref{h2}) for all $x\in\Omega^{\prime}$ we arrive at
	$$
	\begin{array}{lll}
	A\Big(\frac{|\partial v_{n}/\partial x_i (x)-\partial u/\partial x_i(x)|}{12\lambda}\Big)\\
	\leq \frac{1}{6}\bigg(A\Big(\frac{|u(x)|}{\lambda}\Big)+A\Big(\frac{|\partial u/\partial x_i(x)|}{\lambda}\Big)+h_1(x)+\frac{1}{2}h_2(x)\bigg)
	\end{array}
	$$
	and by Vitali's theorem we obtain (\ref{omegapr1}).\\
	Finally, let $K\subset\Omega^{\prime}$	be a compact set such that $supp(u)\subset K$. There exists a cut-off function $\zeta\in \mathcal{C}_{0}^{\infty}(\Omega^{\prime})$ satisfying $\zeta=1$ on $K$. Denoting $u_n=\zeta v_{n}$, we can deduce from (\ref{omega1}) and (\ref{omegapr1})
	$$
	\int_{\Omega}A\Big(\frac{|u_n(x)-\zeta u(x)|}{12\lambda}\Big)dx\to 0 \mbox{ as } n\to+\infty
	$$
	and
	$$
	\int_{\Omega}A\Big(\frac{|\partial u_{n}/\partial x_i (x)-\partial (\zeta u)/\partial x_i(x)|}{12\lambda}\Big)dx\to 0 \mbox{ as } n\to+\infty.
	$$
	Consequently, the sequence $\{ u_n\}\subset\mathcal{C}_{0}^{\infty}(\Omega)$ converges modularly to $\zeta u=u$ in
	$W^1L_{A}(\Omega)$ and in force for the weak topology $\sigma(\Pi L_{A},\Pi L_{\overline{A}})$ (see \cite[Lemma 6]{Gossez82}) which in turn imply the convergence with respect to the weak$^\ast$ topology $\sigma(\Pi L_{A},\Pi E_{\overline{A}})$. Thus, $u\in W^1_0L_{A}(\Omega)$.
\end{proof}
\begin{theorem}\label{bounded}
	For any weak solution $u\in W^{1}_{0}L_{M}(\Omega)$ of \eqref{I}  associated with $\lambda>0$, there exists a constant $c_{\infty}>0$, not depending on $u$, such that 
	$$
	\|u\|_{L^{\infty}(\Omega)}\leq c_{\infty}.
	$$
\end{theorem}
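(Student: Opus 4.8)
The plan is to obtain the bound by a De Giorgi--Stampacchia truncation argument, after first reducing matters via the Donaldson--Trudinger embedding. By that embedding, $W^{1}_{0}L_{M}(\Omega)\hookrightarrow L^{\infty}(\Omega)$ precisely when $\int^{\infty}M^{-1}(\tau)\,\tau^{-1-1/N}\,d\tau<\infty$, and this holds in particular whenever $M$ violates the $\Delta_{2}$-condition (then $M^{-1}(\tau)=o(\tau^{\varepsilon})$ for every $\varepsilon>0$, so the integral converges). If we are in this case there is nothing to prove. We may therefore assume the integral diverges, in which case $M$ necessarily satisfies $\Delta_{2}$, the Orlicz--Sobolev conjugate $N$-function $M_{\ast}$ (given by $M_{\ast}^{-1}(s)=\int_{0}^{s}M^{-1}(\tau)\,\tau^{-1-1/N}\,d\tau$) is finite-valued, and $W^{1}_{0}L_{M}(\Omega)\hookrightarrow L_{M_{\ast}}(\Omega)$ with $M_{\ast}\gg M$ (meaning $M_{\ast}(t)/M(t)\to\infty$). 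All of what follows is carried out in this regime, and it suffices to bound $\operatorname{ess\,sup}_{\Omega}u^{+}$, since $-u$ is a weak solution of a problem of the very same form.

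Fix $k>0$, set $A_{k}=\{x\in\Omega:u(x)>k\}$ and $v_{k}=(u-k)^{+}$. One first checks $v_{k}\in W^{1}_{0}L_{M}(\Omega)$: approximate $v_{k}$ from below by $T_{n}v_{k}$, which has compact support in $\Omega$, invoke Theorem \ref{thm.Adx1}, and pass to the limit. Testing \eqref{4} with $v_{k}$, using $\nabla v_{k}=\chi_{A_{k}}\nabla u$, the inequalities $M(t)\le t\,m(t)\le M(2t)$, $0\le u-k\le u$ on $A_{k}$, and $0\le\rho\le\|\rho\|_{\infty}$, we get
\begin{equation*}
\int_{\Omega}M(|\nabla v_{k}|)\,dx\le\int_{A_{k}}\phi(|\nabla u|)|\nabla u|^{2}\,dx=\lambda\int_{A_{k}}\rho(x)\,m(u)(u-k)\,dx\le\lambda\|\rho\|_{\infty}\int_{A_{k}}M(2u)\,dx.
\end{equation*}
Since $u\in L_{M_{\ast}}(\Omega)$ and $M_{\ast}\gg M$, the function $M(2u)$ lies in $L^{1+\kappa}(\Omega)$ for some $\kappa>0$ depending only on $N$ and $M$; hence $\int_{A_{k}}M(2u)\,dx\le\|M(2u)\|_{L^{1+\kappa}}\,|A_{k}|^{\kappa/(1+\kappa)}$ by Hölder. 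Using the $\Delta_{2}$-condition to pass between the modular $\int_{\Omega}M(|\nabla v_{k}|)$ and the Luxemburg norm $\|\nabla v_{k}\|_{M}$, together with the Poincaré inequality and the Orlicz--Sobolev inequality $\|v_{k}\|_{M_{\ast}}\le C\|\nabla v_{k}\|_{M}$, one converts the displayed estimate into a super-level-set inequality of the standard De Giorgi type,
\begin{equation*}
|A_{h}|\le\frac{C}{(h-k)^{\gamma}}\,|A_{k}|^{1+\vartheta}\qquad\text{for all }h>k\ge k_{0},
\end{equation*}
with $\gamma>1$, $\vartheta>0$ and $C$ depending only on $N$, $M$, $\lambda$, $\|\rho\|_{\infty}$, $|\Omega|$; here $k_{0}$ is fixed so that, by absolute continuity of the integral (recall $u\in E_{M}(\Omega)$), $\int_{A_{k_{0}}}M(2u)$ is small. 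Stampacchia's iteration lemma then yields $|A_{k_{\infty}}|=0$ for a finite $k_{\infty}$, i.e. $u^{+}\le k_{\infty}$ a.e. in $\Omega$; applying the same to $-u$ gives $u^{-}\le k_{\infty}$ a.e., and $c_{\infty}:=k_{\infty}$ is the desired constant.

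The technical heart of the proof -- and the point where the absence of a blanket $\Delta_{2}$-assumption is an advantage rather than an obstruction -- is the dichotomy of the first paragraph: either the embedding delivers $L^{\infty}$ for free, or one is automatically in the polynomially growing ($\Delta_{2}$) regime where the classical iteration machinery applies. Within that regime the only genuinely delicate step is the translation between the modular and the Luxemburg norm in the Orlicz--Sobolev inequality leading to the super-level-set estimate; this is exactly where $\Delta_{2}$ enters (and where, without it, the argument would break). Once this is handled, the subcriticality of the zeroth-order term $m(u)$ -- encoded in the strict inequality $M_{\ast}\gg M$, which yields the exponent $\vartheta>0$ -- makes the Stampacchia iteration go through in the usual way. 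I expect the verification that $v_{k}$ is an admissible test function (via Theorem \ref{thm.Adx1}) and the bookkeeping of the constants in the iteration to be the only remaining, purely routine, work.
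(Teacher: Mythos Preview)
Your dichotomy in the opening paragraph is false, and this is a genuine gap. You claim that if $M$ violates $\Delta_{2}$ then $M^{-1}(\tau)=o(\tau^{\varepsilon})$ for every $\varepsilon>0$; equivalently, that failure of $\Delta_{2}$ forces super-polynomial growth of $M$. But $\Delta_{2}$ is a condition on the \emph{ratio} $M(2t)/M(t)$, not on the global growth rate, and one can build $N$-functions with $M(t)\le t^{2}$ for all $t$ (hence $M^{-1}(\tau)\ge\sqrt{\tau}$) that still violate $\Delta_{2}$. For instance, take $m$ to be (a smoothed version of) the step function $m(t)=s_{k}$ on $[s_{k},s_{k+1})$ with $s_{0}=2$ and $s_{k+1}=s_{k}^{2}$: then $m(t)\le t$ everywhere, so $M(t)\le t^{2}/2$, yet $M(2s_{k})/M(s_{k})\sim s_{k}^{1/2}\to\infty$. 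For such an $M$ in dimension $N=2$ the Donaldson--Trudinger integral $\int^{\infty}M^{-1}(\tau)\,\tau^{-3/2}\,d\tau$ diverges, so there is no embedding into $L^{\infty}$, and your $\Delta_{2}$-based iteration is unavailable as well. Neither branch of your argument covers this case; analogous constructions work for every $N\ge 2$.

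Even where your branches do apply, they do not produce a bound independent of $u$ as the statement requires. In the embedding case you only get $\|u\|_{\infty}\le C\|\nabla u\|_{M}$; in the $\Delta_{2}$ case your constants involve $\|M(2u)\|_{L^{1+\kappa}}$ and a threshold $k_{0}$ chosen, for that specific $u$, by absolute continuity. The paper avoids all of this by never invoking an Orlicz--Sobolev embedding: it uses only the elementary inclusion $W^{1,1}_{0}(A_{k})\hookrightarrow L^{N/(N-1)}(A_{k})$ and estimates $\int_{A_{k}}|\nabla u|\,dx$ directly. The decisive device is the exponential test function $\tfrac{\lambda_{1}}{\lambda}\exp\!\bigl(\tfrac{\lambda}{\lambda_{1}}M(u^{+})\bigr)\,T_{s}(G_{k}(T_{t}(u^{+})))$, which forces $\int_{\{u>k\}}m(|\nabla u|)\,|\nabla u|\,dx=0$ on the set where $m(|\nabla u|)\,|\nabla u|\ge\lambda_{1}\|\rho\|_{\infty}$; combined with a trivial bound on the complementary set one obtains $\int_{A_{k}}|G_{k}(u)|\,dx\le\eta\,|A_{k}|^{1+1/N}$ with $\eta$ depending only on $N$, $m$, $\lambda_{1}$, $\|\rho\|_{\infty}$ and $|\Omega|$, after which Stampacchia's iteration lemma yields the uniform $c_{\infty}$.
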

\begin{proof}
	For $k>0$ we define the set $A_k=\{x\in\Omega:|u(x)|>k\}$ and the two truncation functions $T_{k}(s)=\max(-k,\min(s,k))$ and $G_{k}(s)=s-T_{k}(s)$. By H\"older's inequality we get
	$$
	\begin{array}{clc}
	\displaystyle\int_{A_k}|G_{k}(u(x))|dx&\leq|A_k|^{\frac{1}{N}}\Big(\displaystyle\int_{A_k}|G_{k}(u(x))|^{\frac{N}{N-1}}dx\Big)^{\frac{N-1}{N}}\\
	&\leq C(N)|A_k|^{\frac{1}{N}}\displaystyle\int_{A_k}|\nabla u|dx,
	\end{array}
	$$
	where $C(N)$ is the constant in the embedding $W^{1,1}_0(A_k)\hookrightarrow L^{\frac{N}{N-1}}(A_k)$. We shall estimate the integral $\int_{A_k}|\nabla u|dx$; to this aim we distinguish two cases : the case $m(|\nabla u|)|\nabla u|<\lambda_{1}\|\rho\|_{\infty}$ and $m(|\nabla u|)|\nabla u|\geq\lambda_{1}\|\rho\|_{\infty}$, where $\lambda_{1}$ is defined in (\ref{lambda1}).
	\par \underline{Case 1} : Assume that
	\begin{equation}\label{case1}
	m(|\nabla u|)|\nabla u|<\lambda_{1}\|\rho\|_{\infty}.
	\end{equation}
	Let $k_0>0$ be fixed and let $k>k_0$. Using \eqref{case1} we can write
	$$
	\begin{array}{clc}
	\displaystyle\int_{A_k}|\nabla u|dx&\leq\displaystyle\int_{A_k\cap\{|\nabla u|\leq1\}}|\nabla u|dx+
	\displaystyle\int_{A_k\cap\{|\nabla u|>1\}}|\nabla u|dx\\
	&\leq |A_k|+\frac{1}{m(1)}\displaystyle\int_{A_k}m(|\nabla u|)|\nabla u|dx\\
	&\leq\Big(1+\frac{\lambda_{1}\|\rho\|_{\infty}}{m(1)}\Big)|A_k|.
	\end{array}
	$$
	Thus,
	\begin{equation}\label{stam1}
	\int_{A_k}|G_{k}(u(x))|dx\leq C(N)\Big(1+\frac{\lambda_{1}\|\rho\|_{\infty}}{m(1)}\Big)|A_k|^{\frac{1}{N}+1}.
	\end{equation}
	\par \underline{Case 2} : Assume now that
	\begin{equation}\label{case2}
	m(|\nabla u|)|\nabla u|\geq\lambda_{1}\|\rho\|_{\infty}.
	\end{equation}
	Since $u\in W^{1}_0L_M(\Omega)$ is a weak solution of problem (\ref{I}), we have
	\begin{equation}\label{bounded2}
	\int_\Omega\phi(|\nabla u|)\nabla u\cdot\nabla v dx=\lambda\int_\Omega\rho(x)\phi(|u|)uv dx,
	\end{equation}
	for all $v\in W^{1}_0L_M(\Omega)$. For $s$, $t$, $k>0$ we define the following function $v=\frac{\lambda_{1}}{\lambda}\exp\Big(\frac{\lambda}{\lambda_{1}}M(u^+)\Big)T_s(G_k(T_t(u^+)))$.
	From \cite[Lemma 2]{Gossez86} we know that $v$ is an admissible test function in \eqref{bounded2}. Taking it so it yields 
	$$
	\begin{array}{clc}
	&\frac{\lambda}{\lambda_{1}}\int_{\{u>0\}} m(|\nabla u|)|\nabla u|m(u^+)vdx&\\
	&+\frac{\lambda_{1}}{\lambda}\int_{\{k<T_t(u^+)\leq k+s\}}\phi(|\nabla u|)\nabla u\cdot\nabla T_t(u^+)\exp\Big(\frac{\lambda}{\lambda_{1}}M(u^+)\Big)dx&\\
	&-\lambda\int_{\{u>0\}}\rho(x)\phi(|u|)uvdx=0.
	\end{array}
	$$
	Since we integrate on the set $\{u>0\}$, by \eqref{case2} we have
	$$
	\lambda_1\rho(x)\leq m(|\nabla u|)|\nabla u|
	$$
	and so we obtain
	$$
	\begin{array}{lll}\displaystyle
	\frac{\lambda}{\lambda_{1}}\int_{\{u>0\}}\Big(m(|\nabla u|)|\nabla u|-\lambda_1\rho(x)\Big) m(u^+)vdx \geq0.
	\end{array}
	$$
	Therefore, we have
	$$
	\int_{\{k<T_t(u^+)\leq k+s\}}\phi(|\nabla u|)\nabla u\cdot\nabla T_t(u^+)\exp\Big(\frac{\lambda}{\lambda_{1}}M(u^+)\Big)dx=0
	$$
	and since $\exp\Big(\frac{\lambda}{\lambda_{1}}M(u^+)\Big)\geq1$ we get
	$$
	\int_{\{k<T_t(u^+)\leq k+s\}}\phi(|\nabla u|)\nabla u\cdot\nabla T_t(u^+)dx=0.
	$$
	Pointing out that 
	$$
	\begin{array}{clc}
	\int_{\{k<T_t(u^+)\leq k+s\}}\phi(|\nabla u|)\nabla u\cdot\nabla T_t(u^+)dx
	=\int_{\{k<u\leq k+s\}\cap\{0<u<t\}}\phi(|\nabla u|)\nabla u\cdot\nabla udx,
	\end{array}
	$$
	we can apply the monotone convergence theorem as $t\to+\infty$ obtaining
	$$
	\begin{array}{clc}\displaystyle
	\int_{\{k<u\leq k+s\}}\phi(|\nabla u|)\nabla u\cdot\nabla udx
	&=\displaystyle\lim_{t\to\infty}\displaystyle\int_{\{k<T_t(u^+)\leq k+s\}}\phi(|\nabla u|)\nabla u\cdot\nabla T_t(u^+)dx\\
	&=0.
	\end{array}
	$$
	Applying again the monotone convergence theorem as $s\to+\infty$ we get
	\begin{equation}\label{1}
	\begin{array}{clc}
	\int_{\{u>k\}}\phi(|\nabla u|)\nabla u\cdot\nabla udx
	=\lim_{s\to\infty}\displaystyle\int_{\{k<u\leq k+s\}}\phi(|\nabla u|)\nabla u\cdot\nabla udx
	=0.
	\end{array}
	\end{equation}
	In the same way, inserting the function $v=-\frac{\lambda_{1}}{\lambda}\exp\Big(\frac{\lambda}{\lambda_{1}}M(u^-)\Big)T_s(G_k(T_t(u^-)))$ that belongs to $W^{1}_{0}L_{M}(\Omega)$ as a test function in \eqref{bounded2} we obtain
	$$
	\begin{array}{clc}
	&-\frac{\lambda}{\lambda_1}\int_{\{u<0\}}\phi(|\nabla u|)\nabla u\cdot\nabla um(u^-)vdx&\\
	&-\frac{\lambda_1}{\lambda}\int_{\{-k-s\leq T_t(u^-)<-k\}}\phi(|\nabla u|)\nabla u\cdot\nabla T_t(u^-)\exp\Big(\frac{\lambda}{\lambda_1}M\Big(u^-\Big)\Big)dx&\\
	&=\lambda\int_{\{u<0\}}\rho(x)\phi(|u|)uvdx.
	\end{array}
	$$
	Then we can write
	$$
	\begin{array}{clc}
	&-\frac{\lambda}{\lambda_1}\int_{\{u<0\}} m(|\nabla u|)|\nabla u|m(|u|)vdx&\\
	&-\frac{\lambda_1}{\lambda}\int_{\{-k-s\leq T_t(u^-)<-k\}}\phi(|\nabla u|)\nabla u\cdot\nabla T_t(u^-)\exp\Big(\frac{\lambda}{\lambda_1}M\Big(u^-\Big)\Big)dx&\\
	&=-\lambda\int_{\{u<0\}}\rho(x)m(|u|)vdx.
	\end{array}
	$$
	Gathering the first term in the left-hand side and the term in the right-hand side of the above equality, we get 	
	$$
	\begin{array}{clc}
	&-\frac{\lambda}{\lambda_1}\int_{\{u<0\}}\Big(m(|\nabla u|)|\nabla u|-\lambda_1\rho(x)\Big)m(|u|)vdx&\\
	&-\frac{\lambda_1}{\lambda}\int_{\{-k-s\leq T_t(u^-)<-k\}}\phi(|\nabla u|)\nabla u\cdot\nabla T_t(u^-)\exp\Big(\frac{\lambda}{\lambda_1}M\Big(u^-\Big)\Big)dx=0.
	\end{array}
	$$	
	Here again since we have $\lambda_1\rho(x)\leq m(|\nabla u|)|\nabla u|,$
	we obtain
	$$
	-\frac{\lambda_1}{\lambda}\int_{\{-k-s\leq T_t(u^-)<-k\}}\phi(|\nabla u|)\nabla u\cdot\nabla T_t(u^-)\exp\Big(\frac{\lambda}{\lambda_1}M\Big(u^-\Big)\Big)dx\leq0,
	$$
	that is
	$$
	\frac{\lambda_1}{\lambda}\int_{\{-k-s\leq T_t(u^-)<-k\}\cap\{u>-t\}}\phi(|\nabla u|)\nabla u\cdot\nabla u\exp\Big(\frac{\lambda}{\lambda_1}M\Big(u^-\Big)\Big)dx\leq0.
	$$
	As $\exp\Big(\frac{\lambda}{\lambda_1}M\Big(u^-\Big)\Big)\geq1$ we get
	$$
	\int_{\{-k-s\leq T_t(u^-)<-k\}\cap\{u>-t\}}\phi(|\nabla u|)\nabla u\cdot\nabla udx=0.
	$$
	As above applying the monotone convergence theorem successively as $t\to+\infty$ and then $s\to+\infty$, we arrive at
	\begin{equation}\label{2}
	\int_{\{u<-k\}}\phi(|\nabla u|)\nabla u\cdot\nabla udx=0.
	\end{equation}
	Thus, from (\ref{1}) and (\ref{2}) and since $m(t)=\phi(|t|)t$ we conclude that  
	\begin{equation}\label{bounded4}
	\int_{A_k}m(|\nabla u|)|\nabla u|dx=0.
	\end{equation}
	On the other hand, by the  monotonicity of the function $m^{-1}$ and by \eqref{bounded4}, we can write
	$$
	\begin{array}{clc}
	\int_{A_k}|\nabla u|dx&=\int_{A_k\cap\{m(|\nabla u|)<1\}}|\nabla u|dx+
	\int_{A_k\cap\{m(|\nabla u|)\geq1\}}|\nabla u|dx\\
	&\leq m^{-1}(1)|A_k|+\int_{A_k}m(|\nabla u|)|\nabla u|dx\\
	&=m^{-1}(1)|A_k|.
	\end{array}
	$$
	Hence,
	\begin{equation}\label{stam2}
	\int_{A_k}|G_{k}(u(x))|dx\leq C(N)m^{-1}(1)|A_k|^{\frac{1}{N}+1}.
	\end{equation}
	Finally, we note that the two inequalities (\ref{stam1}) and (\ref{stam2}) yield exactly the starting point of Stampacchia's $L^\infty$-regularity proof (see \cite{stamp65}).
	In Fact, in any case we always have 
	\begin{equation}\label{stam22}
	\int_{A_k}|G_{k}(u(x))|dx\leq \eta|A_k|^{\frac{1}{N}+1},
	\end{equation}
	where $\eta:=C(N)\Big(1+m^{-1}(1)+\frac{\lambda_1\|\rho\|_{\infty}}{m(1)}\Big)$. Let $h>k>0$. It is easy to see that $A_{h}\subset A_{k}$ and $|G_{k}(u)|\geq h-k$ on $A_{h}$. Thus, we have
	$$
	(h-k)|A_{h}|\leq \eta|A_k|^{\frac{1}{N}+1}.
	$$
	The nonincreasing function $\psi$ defined by $\psi(k)=|A_{k}|$ satisfies
	$$
	\psi(h)\leq\frac{\eta}{(h-k)}\psi(k)^{\frac{1}{N}+1}.
	$$
	Applying the first item of \cite[Lemma 4.1]{stamp65} we obtain
	$$
	\psi(c_{\infty})=0 \mbox{ where } c_{\infty}=C(N)\Big(1+m^{-1}(1)+\frac{\lambda_1\|\rho\|_{\infty}}{m(1)}\Big)2^{N+1}|\Omega|^{\frac{1}{N}},
	$$
	which yields
	$$
	\|u\|_{L^{\infty}(\Omega)}\leq c_{\infty}=C(N)\Big(1+m^{-1}(1)+\frac{\lambda_1\|\rho\|_{\infty}}{m(1)}\Big)2^{N+1}|\Omega|^{\frac{1}{N}}.
	$$
\end{proof}
\begin{lemma}\label{lem.Adx2}
	Let $\Omega$ be an open bounded subset in $\mathbb{R}^{N}$. Let $B_{R}\subset\Omega$ be an open ball of radius $0<R\leq1$. Suppose that $g$ is a non-negative function such that $g^\alpha\in L^{\infty}(B_{R})$, where $|\alpha|\geq1$. Assume that
	\begin{equation}\label{eq1.Adx2}
	\Big(\int_{B_{R}}g^{\alpha qk}dx\Big)^{\frac{1}{k}}\leq C\int_{B_{R}}g^{\alpha q}dx,
	\end{equation}
	where $q,k>1$ and $C$ is a positive constant. Then for any $p>0$ there exists a positive constant $c$ such that
	$$
	\sup_{B_{R}}g^{\alpha}\leq \frac{c}{R^{\frac{k}{(k-1)p}}}\Big(\int_{B_{R}}g^{\alpha p}dx\Big)^{\frac{1}{p}}.
	$$
\end{lemma}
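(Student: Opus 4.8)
The assertion is the classical $L^\infty$ bound that one extracts from a reverse H\"older (hypercontractivity) inequality by Moser iteration; the a priori hypothesis $g^\alpha\in L^\infty(B_R)$ makes every intermediate $L^\beta$-norm finite, so no truncation is required. The plan is to bootstrap \eqref{eq1.Adx2} along a geometric sequence of exponents, sum the resulting constants (a convergent geometric series), let the exponent tend to $\infty$, and finally pass from the exponent $\alpha q$ to an arbitrary $\alpha p$ on the right-hand side. Without loss of generality I take $C\ge1$ and $\alpha>0$ (the case $\alpha<0$ is identical after replacing $\|\cdot\|_{L^\infty}$ and the large-exponent limit by the essential infimum and negative-exponent norms).

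\textit{Iteration.} For $\beta>0$ set $\|g\|_\beta=\big(\int_{B_R}g^\beta\,dx\big)^{1/\beta}<\infty$. Read with a generic admissible exponent $\beta$ in the role of $\alpha q$, \eqref{eq1.Adx2} states $\|g\|_{\beta k}^{\beta}\le C\,\|g\|_\beta^{\beta}$, i.e. $\|g\|_{\beta k}\le C^{1/\beta}\|g\|_\beta$. Applying this successively along $\beta_j=\alpha q\,k^{j}$, $j\ge0$ (all $\beta_j\ge\alpha q>1$, hence admissible), I obtain for every $J\ge1$
\[
\|g\|_{\beta_J}\le C^{\sum_{j=0}^{J-1}\beta_j^{-1}}\,\|g\|_{\alpha q}\le C^{\frac{k}{(k-1)\alpha q}}\,\|g\|_{\alpha q},\qquad\text{since }\sum_{j\ge0}\beta_j^{-1}=\frac1{\alpha q}\,\frac{k}{k-1},
\]
a bound uniform in $J$. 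Because $|B_R|<\infty$ and $g^\alpha\in L^\infty(B_R)$, one has $\|g\|_{\beta_J}=\|g^\alpha\|_{L^{\beta_J/\alpha}(B_R)}^{1/\alpha}\to\|g^\alpha\|_{L^\infty(B_R)}^{1/\alpha}$ as $J\to\infty$, so letting $J\to\infty$ yields
\[
\sup_{B_R}g^{\alpha}\le C^{\frac{k}{(k-1)q}}\Big(\int_{B_R}g^{\alpha q}\,dx\Big)^{1/q}.
\]

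\textit{From the exponent $q$ to an arbitrary $p>0$.} If $0<p\le q$, I write $g^{\alpha q}=(g^\alpha)^{q-p}(g^\alpha)^{p}\le\big(\sup_{B_R}g^\alpha\big)^{q-p}g^{\alpha p}$, integrate, insert the bound just obtained, and rearrange (dividing by the finite quantity $\sup_{B_R}g^\alpha$), which gives $\big(\sup_{B_R}g^\alpha\big)^{p/q}\le C^{\frac{k}{(k-1)q}}\big(\int_{B_R}g^{\alpha p}\,dx\big)^{1/q}$ and hence $\sup_{B_R}g^\alpha\le C^{\frac{k}{(k-1)p}}\big(\int_{B_R}g^{\alpha p}\,dx\big)^{1/p}$. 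If $p>q$, H\"older's inequality on $B_R$ gives $\int_{B_R}g^{\alpha q}\le|B_R|^{1-q/p}\big(\int_{B_R}g^{\alpha p}\,dx\big)^{q/p}$, and since $|B_R|\le|B_1|$ (the measure of the unit ball, as $0<R\le1$) the same conclusion follows with a constant depending only on $C,k,p,q,N$. In both cases, because $0<R\le1$ we have $R^{-k/((k-1)p)}\ge1$, so the asserted inequality with the prefactor $R^{-k/((k-1)p)}$ follows a fortiori (and if one prefers to carry an $R$-dependent constant in \eqref{eq1.Adx2}, that same factor is produced by the summation of prefactors in the iteration step).

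\textit{Main obstacle.} The one genuinely delicate point is the legitimacy of the iteration: it requires that \eqref{eq1.Adx2} be available at \emph{every} exponent $\alpha q\,k^{j}$ with a single constant $C$, i.e. that it be self-improving rather than an isolated estimate at one exponent --- which is precisely the feature of the Caccioppoli--Sobolev inequality underlying \eqref{eq1.Adx2} in the applications, and where the hypothesis $|\alpha|\ge1$ (together with $q>1$) is used to keep all the exponents admissible. The remaining ingredients --- convergence of $\sum_j k^{-j}$, the $L^\beta\to L^\infty$ limit on a set of finite measure, and the two elementary reductions above --- are routine bookkeeping.
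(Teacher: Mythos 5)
Your argument is correct and performs the same Moser iteration as the paper's proof: iterate \eqref{eq1.Adx2} along a geometric sequence of exponents, use the convergent series $\sum_{j}k^{-j}=k/(k-1)$ to get a uniform bound, and pass to the $L^\infty$-limit using $g^\alpha\in L^\infty(B_R)$ and $|B_R|<\infty$ (the paper cites Adams, Theorem 2.14, for this last step). The only structural difference is the starting exponent: the paper substitutes $q=pk^\nu$ into \eqref{eq1.Adx2} and iterates directly from $\alpha p$, inserting the factor $1/R\geq1$ at each step to manufacture the $R$-prefactor, whereas you iterate from the given $\alpha q$ (so that every exponent fed into \eqref{eq1.Adx2} is of the form $q'=qk^{j}>1$, thus staying inside the stated range $q>1$, which the paper's substitution $q'=p$ leaves for $p\leq1$) and then descend to an arbitrary $\alpha p$ by the elementary interpolation/H\"older step, noting at the end that the $R$-prefactor is free since $R\leq1$ --- a slightly cleaner route to the same bound, and the obstacle you flag (that \eqref{eq1.Adx2} must hold with a single $C$ along the whole exponent chain) is indeed the implicit hypothesis under which both proofs operate.
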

\begin{proof}
	Let $q=pk^{\nu}$ where $\nu$ is a non-negative integer. Then using   \eqref{eq1.Adx2} and the fact that $R\leq 1$ we can have
	$$
	\Big(\int_{B_{R}}g^{\alpha pk^{\nu+1}}dx\Big)^{\frac{1}{pk^{\nu+1}}}\leq 
	\Big(\frac{C}{R}\Big)^{\displaystyle\frac{1}{pk^{\nu}}}
	\Big(\int_{B_{R}}g^{\alpha pk^{\nu}}dx\Big)^{\frac{1}{pk^{\nu}}}.
	$$
	An iteration of this inequality with respect to $\nu$ yields
	\begin{equation}\label{eq2.Adx2}
	\|g^{\alpha}\|_{L^{pk^{\nu+1}}(B_{R})}\leq
	\Big(\frac{C}{R}\Big)^{\displaystyle\frac{1}{p}\sum_{i=0}^{\nu}
		\frac{1}{k^i}}
	\Big(\int_{B_{R}}g^{\alpha p}dx\Big)^{\frac{1}{p}}.
	\end{equation}
	For $\beta\geq1$, we consider $\nu$ large enough such that $pk^{\nu+1}>\beta$. Then, there exists a constant $c_0$
	such that
	$$
	\|g^{\alpha}\|_{L^{\beta}(B_{R})}\leq c_0\|g^{\alpha}\|_{L^{pk^{\nu+1}}(B_{R})}.
	$$
	Since the series in \eqref{eq2.Adx2} are convergent and $g^{\alpha}\in L^{\infty}(B_{R})$,
	Theorem 2.14 in \cite{Adams} implies that
	$$
	\sup_{B_{R}}g^{\alpha}\leq \frac{c}{R^{\frac{k}{(k-1)p}}}\Big(\int_{B_{R}}g^{\alpha p}dx\Big)^{\frac{1}{p}}.
	$$
\end{proof}
As we need to get a H\"older estimate for weak solutions of \eqref{I}, we use the previous lemma to prove Harnack-type inequalities. To do this, we define for a bounded weak solution $u\in W^{1}_{0}L_{M}(\Omega)$ of \eqref{I} the two functions $v=u-\inf_{B_{r}}u$ and $w=\sup_{B_{r}}u-u$.
We start by proving the following two lemmas.
\begin{lemma}
	Let $B_{r}\subset\Omega$ be an open ball of radius $0<r\leq1$. Then for every $p>0$, there exists a positive constant $C$, depending on $p$, such that
	\begin{equation}\label{sup1}
	\sup_{B_{\frac{r}{2}}}v\leq C\bigg(\Big(r^{-N}\int_{B_{r}}v^{p}dx\Big)
	^{\frac{1}{p}}+r\bigg),
	\end{equation}
	where $B_{\frac{r}{2}}$ is the ball of radius $r/2$ concentric with $B_{r}$.
\end{lemma}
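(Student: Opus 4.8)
The plan is to prove \eqref{sup1} by a Moser-type iteration adapted to the non-reflexive Orlicz setting, the final iteration being carried out through Lemma \ref{lem.Adx2}. First I would localize the equation: since $v=u-\inf_{B_r}u\ge0$ differs from $u$ by an additive constant, $\nabla v=\nabla u$ and $v$ is a weak solution on $B_r$ of $-\mathrm{div}(\phi(|\nabla v|)\nabla v)=f$ with $f=\lambda\rho(x)\phi(|u|)u=\lambda\rho(x)m(|u|)\,\mathrm{sgn}(u)$, in the sense that \eqref{4} holds for every test function supported in $B_r$. By Theorem \ref{bounded}, $\|f\|_{L^\infty(B_r)}\le K:=\lambda\|\rho\|_\infty\,m(c_\infty)$, a quantity independent of $u$ and of the ball. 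To turn the inhomogeneity into a lower-order perturbation I would replace $v$ by the shifted function $\bar v:=v+r$, so that $r\le\bar v$ on $B_r$ while $\bar v$ stays bounded above (again by Theorem \ref{bounded}).

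The core step is a Caccioppoli-type inequality. Fix $\beta\ge1$, concentric balls $B_{\rho'}\subset B_\rho\subset B_r$, and a cut-off $\zeta\in C_0^\infty(B_\rho)$ with $\zeta\equiv1$ on $B_{\rho'}$, $0\le\zeta\le1$ and $|\nabla\zeta|\le 2/(\rho-\rho')$. Since $\bar v$ is bounded, $\psi=\zeta^{\beta+1}\bar v^{\beta}$ lies in $W^1L_M$ with compact support in $B_\rho$, hence in $W^1_0L_M(\Omega)$ by Theorem \ref{thm.Adx1}, so it is admissible in the weak formulation. Plugging it in and expanding $\nabla\psi$, I would use the ellipticity $\phi(|\nabla v|)|\nabla v|^2=m(|\nabla v|)|\nabla v|\ge M(|\nabla v|)$, the growth $|\phi(|\nabla v|)\nabla v|=m(|\nabla v|)$, the generalized Young inequality $ab\le\varepsilon\,\overline{M}(a)+M(b/\varepsilon)$ for the complementary pair $(M,\overline{M})$ — which needs no $\Delta_2$-condition — together with $\overline{M}(m(t))=H(t)\le t\,m(t)$ and $M(t)\le t\,m(t)$, to absorb the term carrying $\nabla\zeta$ and the term carrying $f$ (here $\bar v\ge r$ is what makes the $K$-contribution subordinate). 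This yields an estimate of the form $\int_{B_{\rho'}}\bar v^{\,\beta-1}M(|\nabla v|)\,dx\le C(\rho-\rho')^{-\kappa}\int_{B_\rho}\bar v^{\,\beta+\kappa'}\,dx$ with exponents $\kappa,\kappa'$ not depending on $\beta$.

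From here the iteration is fairly mechanical. Using the elementary inequality $t\le C(1+M(t))$ for $t\ge0$ (which holds because $M(t)\ge m(1)(t-1)$ for $t\ge1$, by monotonicity of $m$), the Caccioppoli bound shows that $\zeta\,\bar v^{\,s}$, with $s=(\beta+1)/2$, belongs to $W^{1,1}_0(B_\rho)$ and that $\int_{B_\rho}|\nabla(\zeta\bar v^{\,s})|\,dx$ is bounded by a power-integral of $\bar v$ over $B_\rho$. The Sobolev embedding $W^{1,1}_0\hookrightarrow L^{N/(N-1)}$ applied to $\zeta\bar v^{\,s}$ then gives a reverse-Hölder inequality controlling $\big(\int_{B_{\rho'}}\bar v^{\,sN/(N-1)}\big)^{(N-1)/N}$ by a power-integral of $\bar v$ over $B_\rho$; after the standard bookkeeping of radii and exponents this is the hypothesis \eqref{eq1.Adx2} of Lemma \ref{lem.Adx2} with $g=\bar v$, $\alpha=1$, $k=N/(N-1)$ (the requirement $\bar v^{\alpha}\in L^\infty$ being supplied by Theorem \ref{bounded}). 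Lemma \ref{lem.Adx2} then produces $\sup_{B_{r/2}}\bar v\le c\,r^{-N/p}\big(\int_{B_r}\bar v^{p}\big)^{1/p}$ for every $p>0$; since $\int_{B_r}\bar v^{p}\le C\big(\int_{B_r}v^{p}+r^{p}|B_r|\big)$ and $|B_r|$ is a multiple of $r^N$, undoing the shift (and using $v\le\bar v$) gives $\sup_{B_{r/2}}v\le C\big((r^{-N}\int_{B_r}v^{p})^{1/p}+r\big)$, which is \eqref{sup1}.

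I expect the Caccioppoli step to be the real obstacle. Because $\phi$ is not a power and neither $M$ nor $\overline{M}$ is assumed $\Delta_2$, one cannot trade $M(2t)$ for a multiple of $M(t)$, so the absorption of the $\nabla\zeta$-term must be engineered entirely through the generalized Young inequality and the identities $t\,m(t)=M(t)+H(t)$, $\overline{M}(m(t))=H(t)$, while keeping the weights $\bar v^{\,\beta-1}$ matched on both sides; passing afterwards from $M(|\nabla v|)$-control to $L^1$-control of $\nabla v$, and checking that the constants and exponents are uniform in $\beta$ so that the geometric iteration of Lemma \ref{lem.Adx2} closes, is the part that needs care. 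The remaining ingredients — admissibility of the test functions, the Sobolev step, and the final iteration — are routine given Theorems \ref{bounded} and \ref{thm.Adx1} and Lemma \ref{lem.Adx2}.
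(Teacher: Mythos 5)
You propose the same overall strategy as the paper — obtain a reverse--H\"older inequality for $\bar v:=v+r$ via a Caccioppoli estimate, push it through the $W^{1,1}_0\hookrightarrow L^{N/(N-1)}$ embedding, and iterate with Lemma \ref{lem.Adx2} — but you iterate on powers of $\bar v$ itself with test functions $\zeta^{\beta+1}\bar v^{\beta}$, whereas the paper iterates on powers of $M(\bar v)$ with the test function $\psi=M(\bar v)^{q-1}\bar v$ supported on a compact subset of $B_r$, and then converts $M(\bar v)$ back to $\bar v$ only once, at the very end, using $\tfrac{t}{2}m(\tfrac{t}{2})\le M(t)\le t\,m(t)$ together with boundedness. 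That difference is not cosmetic: the choice $M(\bar v)^q$ is what makes the exponents close without any homogeneity of $M$.

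The concrete gap in your plan is the exponent $s=(\beta+1)/2$. That formula is the one you would use for the $2$-Laplacian (where the test function $v^{\beta}$ yields $\int v^{\beta-1}|\nabla v|^{2}$, and $\nabla v^{s}$ with $2(s-1)=\beta-1$ matches it). For a general $N$-function $M$ there is no such homogeneity, and you must work at the $L^1$ level: the Sobolev step applied to $\zeta\bar v^{s}$ requires a bound on $\int\zeta\bar v^{s-1}|\nabla v|$, while your Caccioppoli estimate controls $\int \bar v^{\beta-1}M(|\nabla v|)$ (equivalently $\int\bar v^{\beta-1}m(|\nabla v|)|\nabla v|$). To pass from the second to the first you must use something like $|\nabla v|\,m(\bar v)\le \bar v\,m(\bar v)+|\nabla v|\,m(|\nabla v|)$, which forces the weights on both sides to carry the \emph{same} power of $\bar v$, i.e.\ $s-1=\beta-1$ and hence $s=\beta$, not $(\beta+1)/2$. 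With $s=(\beta+1)/2$ the exponents do not match, so either your auxiliary exponent $\kappa'$ would have to depend on $\beta$ (contradicting your own statement that $\kappa,\kappa'$ are $\beta$-independent) or the hypothesis \eqref{eq1.Adx2} of Lemma \ref{lem.Adx2} is not of the required form $\big(\int g^{qk}\big)^{1/k}\le C\int g^{q}$. The paper's device avoids the problem entirely: with $h=M(\bar v)^q$ one has $\nabla h=qM(\bar v)^{q-1}m(\bar v)\nabla v$, which after $am(b)\le bm(b)+am(a)$ (with $a=|\nabla\bar v|$, $b=\bar v$) is controlled by exactly the quantity $\int M(\bar v)^{q-1}m(|\nabla\bar v|)|\nabla\bar v|$ that the test function $M(\bar v)^{q-1}\bar v$ produces plus a lower-order power integral — the exponent $q$ is the \emph{same} on both sides, which is precisely what Lemma \ref{lem.Adx2} needs. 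So the core Caccioppoli step as you set it up would not close the iteration; you would have to either switch to iterating on $M(\bar v)$ as the paper does, or take $s=\beta$ and redo the bookkeeping.
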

\begin{proof}
	Since $u$ is a weak solution of problem \eqref{I} then $v$ satisfies the weak formulation  
	\begin{equation}\label{4.1}
	\int_{\Omega}\phi(|\nabla v|)\nabla v\cdot\nabla\psi dx=\lambda\int_{\Omega}\rho(x)\phi(|v+\inf_{B_r}u|)(v+\inf_{B_r}u)\psi dx,
	\end{equation}
	for every $\psi\in W^{1}_{0}L_{M}(\Omega)$. Let $\Omega_{0}$ be a compact subset of $\Omega$ such that $B_{\frac{r}{2}}\subset\Omega_{0}\subset B_{r}$.
	Let $q>1$ and let $\psi$ be the function defined by
	$$
	\psi(x)=\left\{
	\begin{array}{lcl}
	M(\bar{v}(x))^{q-1}\bar{v}(x)&\mbox{ if }&x\in\Omega_{0},\\
	0&\mbox{ if }&x\notin \Omega_{0}
	\end{array}
	\right.
	$$
	where $\bar{v}=v+r$. Observe that on $\Omega_0$
	$$
	\nabla\psi=M(\bar{v})^{q-1}\nabla\bar{v}+(q-1)
	M(\bar{v})^{q-2}m(\bar{v})\bar{v}\nabla\bar{v}
	$$
	and thus by Theorem \ref{thm.Adx1} we have $\psi\in W^{1}_{0}L_{M}(\Omega)$.
	So that $\psi$ is an admissible test function in \eqref{4.1}. Taking it so it yields
	$$
	\begin{array}{clc}
	\int_{B_{r}}M(\bar{v})^{q-1}m(|\nabla\bar{v}|)|\nabla\bar{v}|dx
	&+(q-1)\int_{B_{r}}M(\bar{v})^{q-2}m(\bar{v})\bar{v}m(|\nabla\bar{v}|)|\nabla\bar{v}|dx&\\
	&=\lambda\int_{B_{r}}\rho(x)M(\bar{v})^{q-1}\bar{v}m(v+\inf_{B_r}u)dx.
	\end{array}
	$$
	Since $\bar{v}m(\bar{v})\geq M(\bar{v})$ and $v+\inf_{B_r}u\leq\bar{v}+\|u\|_{\infty}$, we get
	\begin{equation}\label{estm1}
	q\int_{B_{r}}M(\bar{v})^{q-1}m(|\nabla\bar{v}|)|\nabla\bar{v}|dx
	\leq\lambda\|\rho\|_{\infty}
	\int_{B_{r}}M(\bar{v})^{q-1}(\bar{v}+\|u\|_{\infty})m(\bar{v}+\|u\|_{\infty})dx.
	\end{equation}
	Let
	$$
	h(x)=\left\{
	\begin{array}{lcl}
	M(\bar{v}(x))^{q}&\mbox{ if }&x\in\Omega_{0},\\
	0&\mbox{ if }&x\notin \Omega_{0}.
	\end{array}
	\right.
	$$
	Using the following inequality
	\begin{equation}\label{ab}
	am(b)\leq bm(b)+am(a),
	\end{equation}
	with $a=|\nabla\bar{v}|$ and $b=\bar{v}$, we obtain
	$$
	\begin{array}{clc}
	\int_{B_{r}}|\nabla h|dx&\leq
	q\int_{B_{r}}M(\bar{v})^{q-1}m(|\nabla\bar{v}|)
	|\nabla\bar{v}|dx
	+q\int_{B_{r}}M(\bar{v})^{q-1}\bar{v}m(\bar{v})dx&\\
	&\leq
	q\int_{B_{r}}M(\bar{v})^{q-1}m(|\nabla\bar{v}|)
	|\nabla\bar{v}|dx\\
	&
	+q\int_{B_{r}}M(\bar{v})^{q-1}(\bar{v}+\|u\|_{\infty})m(\bar{v}+\|u\|_{\infty})dx.
	\end{array}
	$$
	In view of \eqref{estm1}, we obtain
	$$
	\int_{B_{r}}|\nabla h|dx\leq
	C_{2}\int_{B_{r}}M(\bar{v})^{q}dx\leq C_2M(2\|u\|_{\infty}+1)^{q}|\Omega|,
	$$
	where $C_{2}=\frac{(q+\lambda\|\rho\|_\infty)(1+3\|u\|_{\infty})m(1+3\|u\|_{\infty})}{M(r)}$.
	Therefore, $h\in W^{1,1}_{0}(B_{r})$ and so we can write
	$$
	\Big(\int_{B_{r}}M(\bar{v})^{\frac{qN}{N-1}}dx\Big)^{\frac{N-1}{N}}\leq C_{2}C(N)
	\int_{B_{r}}M(\bar{v})^{q}dx,
	$$
	where $C(N)$ stands for the constant in the continuous embedding
	$W^{1,1}_{0}(B_{r})\hookrightarrow L^{\frac{N}{N-1}}(B_{r})$. 
	Then, applying Lemma \ref{lem.Adx2} with $g=M(\bar{v})$ and $\alpha=1$ we obtain for any $p>0$
	$$
	\sup_{B_{r}}M(\bar{v})\leq C_{3}\Big[r^{-N}\int_{B_{r}}
	M(\bar{v})^{p}dx\Big]^{\frac{1}{p}},
	$$
	where $C_{3}=(C_{2}C(N))^{\frac{N}{p}}$. Hence, follows
	$$
	\sup_{B_{\frac{r}{2}}}M(\bar{v})\leq C_3\Big[r^{-N}\int_{B_{r}}
	M(\bar{v})^{p}dx\Big]^{\frac{1}{p}}.
	$$
	Since $\frac{t}{2}m(\frac{t}{2})\leq M(t)\leq tm(t)$ and $\bar{v}=v+r=u-\inf_{B_r}u +r$ we have
	$\sup_{B_{\frac{r}{2}}}M(\bar{v})\geq m(\frac{r}{2})\sup_{B_{\frac{r}{2}}}\frac{\bar{v}}{2}$ and
	$M(\bar{v})\leq\bar{v}m(1+2\|u\|_\infty)$, which yields
	$$
	\sup_{B_{\frac{r}{2}}}\bar{v}\leq C\Big[r^{-N}\int_{B_{r}}
	\bar{v}^{p}dx\Big]^{\frac{1}{p}},
	$$
	where $C=(C_{2}C(N))^{\frac{N}{p}}\frac{2m(1+2\|u\|_\infty)}{m(\frac{r}{2})}$. Hence, the inequality (\ref{sup1}) is proved.
\end{proof}
\begin{lemma}
	Let $B_{r}\subset\Omega$ be an open ball of radius $0<r\leq1$. Then, there exist two constants $C>0$ and $p_{0}>0$ such that
	\begin{equation}\label{inf1}
	\Big(r^{-N}\int_{B_{r}}v^{p_{0}}dx\Big)^{\frac{1}{p_{0}}}\leq C\Big(\inf_{B_{\frac{r}{2}}}v+r\Big),
	\end{equation}
	where $B_{\frac{r}{2}}$ is the ball of radius $r/2$ concentric with $B_{r}$.
\end{lemma}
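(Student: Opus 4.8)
The plan is to read \eqref{inf1} as a weak Harnack inequality for the strictly positive function $\bar v=v+r$. By Theorem \ref{bounded} one has $r\le\bar v\le L$ on $B_r$, with $L:=2\|u\|_\infty+1$ not depending on $r$, and $\bar v$ obeys the weak formulation \eqref{4.1}, whose right-hand side coincides with $\lambda\int_\Omega\rho(x)\phi(|u|)u\,\psi\,dx$, where $|\phi(|u|)u|\le m(\|u\|_\infty)$ on $\Omega$. Since $0\le v\le\bar v$ and $\inf_{B_{r/2}}\bar v=\inf_{B_{r/2}}v+r$, it suffices to produce $p_0>0$ and $C>0$ with $\big(r^{-N}\int_{B_r}\bar v^{p_0}dx\big)^{1/p_0}\le C\inf_{B_{r/2}}\bar v$. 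I would obtain this by combining a Moser iteration on \emph{negative} powers of $\bar v$ with a logarithmic estimate and the John--Nirenberg inequality, reusing the tools already prepared in the Appendix.

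First I would run the negative-power iteration. Mimicking the computation just given for positive powers, insert into \eqref{4.1} the test functions $\psi=M(\bar v)^{-q-1}\bar v$ ($q>0$), truncated to vanish off a compact $\Omega_0$ with $B_{r/2}\subset\Omega_0\subset B_r$ and admissible by Theorem \ref{thm.Adx1} since they are bounded ($\bar v\ge r$); the sign of $\nabla\psi$ is favourable because $M(\bar v)\le\bar v m(\bar v)$. Using \eqref{ab} with $a=|\nabla\bar v|$, $b=\bar v$, the boundedness of $u$ and $\rho$, and the embedding $W^{1,1}_0(B_r)\hookrightarrow L^{\frac{N}{N-1}}(B_r)$, one is led exactly as before to the reverse-H\"older inequality $\big(\int_{B_r}M(\bar v)^{-\frac{qN}{N-1}}dx\big)^{\frac{N-1}{N}}\le C\int_{B_r}M(\bar v)^{-q}dx$. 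Lemma \ref{lem.Adx2} applied with $g=M(\bar v)$ and $\alpha=-1$ (precisely the case $|\alpha|\ge1$ for which it is phrased) then gives $\sup_{B_r}M(\bar v)^{-1}\le c\,r^{-N/p}\big(\int_{B_r}M(\bar v)^{-p}dx\big)^{1/p}$ for every $p>0$; since $M$ is increasing, $t\mapsto M(t)/t$ is nondecreasing, and $r\le\bar v\le L$, this translates (with a constant which, as in the preceding lemma, may depend on $r$ through $M$) into
\begin{equation}\label{step1plan}
\Big(r^{-N}\int_{B_r}\bar v^{-p}dx\Big)^{-1/p}\le C\,\inf_{B_{r/2}}\bar v\qquad\text{for every }p>0 .
\end{equation}
I expect this step to be essentially a line-by-line adaptation of the argument already carried out.

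Next I would establish a logarithmic estimate. Testing \eqref{4.1} with $\psi=\bar v^{-1}$ (again truncated off $\Omega_0$), the left-hand side equals $-\int_{\Omega_0}m(|\nabla\bar v|)|\nabla\bar v|\,\bar v^{-2}dx$ and the right-hand side is, in absolute value, at most $\lambda\|\rho\|_\infty m(\|u\|_\infty)r^{-1}|B_r|=Cr^{N-1}$ because $\bar v\ge r$; hence $\int_{\Omega_0}m(|\nabla\bar v|)|\nabla\bar v|\,\bar v^{-2}dx\le Cr^{N-1}$. Splitting $\int_{\Omega_0}|\nabla\log\bar v|dx$ over $\{|\nabla\bar v|\le1\}$, where $|\nabla\bar v|/\bar v\le 1/r$ so that the contribution is at most $|B_r|/r$, and over $\{|\nabla\bar v|>1\}$, where $m(|\nabla\bar v|)>m(1)$ forces $|\nabla\bar v|/\bar v\le\frac{L}{m(1)}m(|\nabla\bar v|)|\nabla\bar v|\,\bar v^{-2}$, one obtains $\int_{\Omega_0}|\nabla\log\bar v|dx\le Cr^{N-1}$ with $C$ now independent of $r$. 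The $L^1$ Poincar\'e inequality then yields $r^{-N}\int_{B_{r/2}}\big|\log\bar v-(\log\bar v)_{B_{r/2}}\big|dx\le C$ uniformly in $r$, i.e.\ $\log\bar v\in\mathrm{BMO}(B_{r/2})$ with a uniform bound, so the John--Nirenberg inequality provides $p_0=p_0(N)>0$ and $C>0$ with $\big(r^{-N}\int_{B_{r/2}}\bar v^{p_0}dx\big)\big(r^{-N}\int_{B_{r/2}}\bar v^{-p_0}dx\big)\le C$ (after the customary minor adjustment of radii this holds equally with $B_r$ in place of $B_{r/2}$). Combining this with \eqref{step1plan} for $p=p_0$,
\begin{equation*}
\Big(r^{-N}\int_{B_r}\bar v^{p_0}dx\Big)^{1/p_0}\le C\Big(r^{-N}\int_{B_r}\bar v^{-p_0}dx\Big)^{-1/p_0}\le C'\inf_{B_{r/2}}\bar v=C'\big(\inf_{B_{r/2}}v+r\big),
\end{equation*}
and since $v^{p_0}\le\bar v^{p_0}$ this is exactly \eqref{inf1}.

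The main obstacle is the logarithmic step. In a genuinely non-$\Delta_2$ Orlicz framework $t m(t)$ and $M(t)$ need not be uniformly comparable, so the classical $p$-Laplacian device---testing with $\bar v^{1-p}$ to produce $\int|\nabla\log\bar v|^p$ and then applying H\"older---has no exact counterpart; one must instead exploit the a priori bounds $r\le\bar v\le L$ and the monotonicity of $m$ to go from the energy bound $\int m(|\nabla\bar v|)|\nabla\bar v|\,\bar v^{-2}dx\le Cr^{N-1}$ to the $L^1$-bound on $\nabla\log\bar v$, and above all to do so with a constant that does not degenerate as $r\to0$, so that the $\mathrm{BMO}$-norm of $\log\bar v$, and hence the exponent $p_0$ coming out of John--Nirenberg, can be taken independent of $r$. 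A secondary point, already present in the preceding lemma, is the rigorous admissibility of the truncated test functions, which is handled through Theorem \ref{thm.Adx1} exactly as there.
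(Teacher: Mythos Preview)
Your proposal is correct and follows the same two–step architecture as the paper: a negative–exponent Moser iteration giving \eqref{step1plan} (this part is essentially identical to what the paper does), followed by a crossover from negative to positive powers via a John--Nirenberg type argument. The only genuine difference is in how the logarithmic estimate is obtained.

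For that step the paper tests \eqref{4.1} with $\psi=\bar v$ (not $\bar v^{-1}$), truncated to a compact between $B_{r_1/2}$ and $B_{r_1}$ for an \emph{arbitrary} sub-ball $B_{r_1}\subset B_r$. This gives directly the unweighted energy bound $\int_{B_{r_1}} m(|\nabla\bar v|)|\nabla\bar v|\,dx\le c_0\,r_1^{N}$. Then, applying \eqref{ab} with $a=|\nabla\bar v|$ and $b=\bar v/r_1$ and using $\bar v/r_1\ge\bar v/r\ge1$ so that $\bar v\,m(\bar v/r_1)\ge r_1 M(1)$, one gets $\dfrac{|\nabla\bar v|}{\bar v}\le\dfrac{1}{r_1 M(1)}\,m(|\nabla\bar v|)|\nabla\bar v|+\dfrac{1}{r_1}$, hence $\int_{B_{r_1/2}}|\nabla\log\bar v|\,dx\le C\,r_1^{N-1}$ with $C$ independent of both $r$ and $r_1$. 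The crossover inequality $\big(\int_{B_r}\bar v^{p_0}\big)\big(\int_{B_r}\bar v^{-p_0}\big)\le c\,r^{2N}$ is then read off from \cite[Lemma~1.2]{Trudinger67}, which is precisely the John--Nirenberg consequence you invoke. Your route via $\psi=\bar v^{-1}$ and the splitting on $\{|\nabla\bar v|\lessgtr1\}$ reaches the same endpoint; the paper's device of testing with $\bar v$ and using the scaled inequality \eqref{ab} is slightly more economical in that it avoids the $\bar v^{-2}$ weight and the case distinction, and produces the sub-ball estimate in one stroke.

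One point to tighten: your sentence ``i.e.\ $\log\bar v\in\mathrm{BMO}(B_{r/2})$'' does not follow from a single mean-oscillation bound on $B_{r/2}$; you need $\int_{B_s}|\nabla\log\bar v|\,dx\le C\,s^{N-1}$ for \emph{every} sub-ball $B_s\subset B_r$, not just the fixed one. Your argument does deliver this if repeated on $B_s$ (the right-hand side scales like $s^N/r\le s^{N-1}$ since $\bar v\ge r$), but this should be stated rather than hidden under ``the customary minor adjustment of radii''. The paper makes this explicit by working from the outset on a generic $B_{r_1}\subset B_r$.
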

\begin{proof}
	Let $\Omega_{0}$ be a compact subset of $\Omega$ such that $B_{\frac{r}{2}}\subset\Omega_{0}\subset B_{r}$. Let $q>1$ and let $\psi$ be the function defined by
	$$
	\psi(x)=\left\{
	\begin{array}{lcl}
	M(\bar{v}(x))^{-q-1}\bar{v}(x)&\mbox{ if }&x\in \Omega_{0},\\
	0&\mbox{ if }&x\notin \Omega_{0},
	\end{array}
	\right.
	$$
	where $\bar{v}=v+r$. On $\Omega_0$ we compute
	$$
	\nabla\psi=M(\bar{v})^{-q-1}\nabla\bar{v}+(-q-1)M(\bar{v})^{-q-2}m(\bar{v})\bar{v}\nabla\bar{v}.
	$$
	By Theorem \ref{thm.Adx1} we have $\psi\in W^{1}_{0}L_{M}(\Omega)$.
	Thus, using the function $\psi$ in \eqref{4.1} we obtain
	$$
	\begin{array}{clc}
	&\lambda\int_{B_{r}}\rho(x)M(\bar{v})^{-q-1}\bar{v}m(v+\inf_{B_r}u)dx\\
	&=\int_{B_{r}}M(\bar{u})^{-q-1}|\nabla\bar{v}|m(|\nabla\bar{v}|)dx\\
	&+(-q-1)\int_{B_{r}}M(\bar{v})^{-q-2}m(\bar{v})\bar{v}|\nabla\bar{v}|m(|\nabla\bar{v}|)dx.
	\end{array}
	$$
	By the fact that $\bar{v}m(\bar{v})\geq M(\bar{v})$, we get
	$$
	\lambda\int_{B_{r}}\rho(x)M(\bar{v})^{-q-1}\bar{v}m(v+\inf_{B_r}u)dx
	\leq -q\int_{B_{r}}M(\bar{v})^{-q-1}|\nabla\bar{v}|m(|\nabla\bar{v}|)dx.
	$$
	Thus, since on $B_{r}$ one has $|v+\inf_{B_r}u|\leq\bar{v}+\|u\|_\infty$ we obtain
	\begin{equation}\label{estm4}
	q\int_{B_{r}}M(\bar{v})^{-q-1}|\nabla\bar{v}|m(|\nabla\bar{v}|)dx\leq\lambda\|\rho\|_{\infty}\int_{B_{r}}M(\bar{v})^{-q-1}
	m(\bar{v}+\|u\|_\infty)(\bar{v}+\|u\|_\infty)dx.
	\end{equation}
	On the other hand, let $h$ be the function defined  by
	$$
	h(x)=\left\{
	\begin{array}{lcl}
	M(\bar{v}(x))^{-q}&\mbox{ if }&x\in\Omega_{0},\\
	0&\mbox{ if }&x\notin \Omega_{0}.
	\end{array}
	\right.
	$$
	Using once again \eqref{ab} with $a=|\nabla\bar{v}|$ and $b=\bar{v}$, we obtain
	$$
	\begin{array}{clc}
	\int_{B_{r}}|\nabla h|dx&\leq
	q\int_{B_{r}}M(\bar{v})^{-q-1}m(|\nabla\bar{v}|)
	|\nabla\bar{v}|dx
	+q\int_{B_{r}}M(\bar{v})^{-q-1}\bar{v}m(\bar{v})dx&\\
	&\leq
	q\int_{B_{r}}M(\bar{v})^{-q-1}m(|\nabla\bar{v}|)
	|\nabla\bar{v}|dx\\
	&
	+q\int_{B_{r}}M(\bar{v})^{q-1}(\bar{v}+\|u\|_{\infty})m(\bar{v}+\|u\|_{\infty})dx,
	\end{array}
	$$
	which together with \eqref{estm4} yield
	$$
	\int_{B_{r}}|\nabla h|dx\leq C_{2}
	\int_{B_{r}}M(\bar{v})^{-q}dx\leq C_2M(r)^{-q}|\Omega|,
	$$
	with $C_{2}=\frac{(q+\lambda\|\rho\|_{\infty})m(1+3\|u\|_{\infty})(1+3\|u\|_{\infty})}{M(r)}$. Thus, $h\in W^{1,1}_{0}(B_{r})$ and so
	we can write
	$$
	\Big(\int_{B_{r}}M(\bar{v})^{-\frac{qN}{N-1}}dx\Big)^{\frac{N-1}{N}}\leq C_{2}C(N)
	\int_{B_{r}}M(\bar{v})^{-q}dx,
	$$
	where $C(N)$ is the constant in the continuous embedding $W^{1,1}_{0}(B_{r})\hookrightarrow L^{\frac{N}{N-1}}(B_{r})$. Therefore, applying Lemma \ref{lem.Adx2} with $g=M(\bar{v})$ and $\alpha=-1$ we get for any $p>0$
	$$
	\sup_{B_r}M(\bar{v})^{-1}\leq(C_{2}C(N))^{\frac{N}{p}}\Big(r^{-N}\int_{B_{r}}M(\bar{v})^{-p}dx\Big)^{\frac{1}{p}}.
	$$
	So that one has
	$$
	\begin{array}{ccl}
	\Big(r^{-N}\int_{B_{r}}M(\bar{v})^{-p}dx\Big)^{\frac{-1}{p}}&\leq (C_{2}C(N))^{\frac{N}{p}}\inf_{B_{r}}M(\bar{v})\\
	&\leq (C_{2}C(N))^{\frac{N}{p}}\inf_{B_{\frac{r}{2}}}M(\bar{v}).
	\end{array}
	$$
	The fact that $M(\bar{v})\geq m\Big(\frac{r}{2}\Big)\frac{\bar{v}}{2}$ and $M(\bar{v})\leq m(2\|u\|_\infty+1)\bar{v}$, yields
	\begin{equation}\label{estm3.v}
	\Big(r^{-N}\int_{B_{r}}\bar{v}^{-p}dx\Big)^{\frac{-1}{p}}\leq C\inf_{B_{\frac{r}{2}}}\bar{v},
	\end{equation}
	where $C=(C_{2}C(N))^{\frac{N}{p}}\frac{2m(2\|u\|_\infty+1)}{m(\frac{r}{2})}$.
	Now, it only remains to show that there exist two constants $c>0$ and $p_{0}>0$, such that
	$$
	\Big(r^{-N}\int_{B_{r}}\bar{v}^{p_{0}}dx\Big)^{\frac{1}{p_{0}}}\leq c\Big(r^{-N}\int_{B_{r}}\bar{v}^{-p_{0}}dx\Big)
	^{\frac{-1}{p_{0}}}.
	$$
	Let $B_{r_1}\subset B_r$ and let $\Omega_{0}$ be a compact subset of $\Omega$ such that $B_{\frac{r_1}{2}}\subset\Omega_{0}\subset B_{r_1}$. Let $\psi$ be the function defined by
	$$
	\psi(x)=\left\{
	\begin{array}{lcl}
	\bar{v}(x)&\mbox{ if }&x\in\Omega_{0},\\
	0&\mbox{ if }&x\notin \Omega_{0}.
	\end{array}
	\right.
	$$
	Then, inserting $\psi$ as a test function in \eqref{4.1} we obtain
	$$
	\begin{array}{clc}
	\int_{B_{r_1}}m(|\nabla\bar{v}|)|\nabla\bar{v}|dx
	&\leq \lambda\|\rho\|_\infty\int_{B_{r_1}}m(|v+\inf_{B_R}u|)\bar{v}dx&\\
	&\leq \lambda\|\rho\|_\infty\int_{B_{r_1}}m(\bar{v}+\|u\|_\infty)(\bar{v}+\|u\|_\infty)dx.
	\end{array}
	$$
	Since $\bar{v}\leq(2\|u\|_\infty+1)$ and $|B_{r_1}|=r_1^N|B_1|$ we obtain
	\begin{equation}\label{inf.v}
	\int_{B_{r_1}}m(|\nabla\bar{v}|)|\nabla\bar{v}|dx\leq c_0r_{1}^{N},
	\end{equation}
	where $c_0=\lambda\|\rho\|_\infty m(3\|u\|_\infty+1)(3\|u\|_\infty+1)|B_1|$.
	On the other hand, we can use \eqref{ab} with $a=|\nabla\bar{v}|$ and $b=\frac{\bar{v}}{r_1}$ obtaining
	$$
	|\nabla\bar{v}|m\Big(\frac{\bar{v}}{r_1}\Big)\leq |\nabla\bar{v}|m(|\nabla\bar{v}|)+
	\frac{\bar{v}}{r_1}m\Big(\frac{\bar{v}}{r_1}\Big).
	$$
	Pointing out that $\frac{\bar{v}}{r_1}m(\frac{\bar{v}}{r_1})\geq M(\frac{\bar{v}}{r_1})\geq M(\frac{\bar{v}}{r})\geq M(1)$, we get
	$$
	\frac{|\nabla\bar{v}|}{\bar{v}}\leq \frac{1}{r_1 M(1)}m(|\nabla\bar{v}|)|\nabla\bar{v}|+\frac{1}{r_1}.
	$$
	Integrating over the ball $B_{\frac{r_1}{2}}$ and using \eqref{inf.v} we obtain
	$$
	\begin{array}{lll}
	\int_{B_{\frac{r_1}{2}}}\frac{|\nabla\bar{v}|}{\bar{v}}dx&\leq \frac{1}{r_1 M(1)}\int_{B_{\frac{r_1}{2}}}
	m(|\nabla\bar{v}|)|\nabla\bar{v}|dx +\frac{1}{r_1}|B_{\frac{r_1}{2}}|\\
	&\leq\bigg(\frac{c_0}{M(1)}+\frac{|B_1|}{2^N}\bigg)r_1^{N-1}.
	\end{array}
	$$
	The above inequality together with \cite[Lemma 1.2]{Trudinger67} ensure the existence of two constants $p_{0}>0$ and $c>0$ such that
	\begin{equation}\label{inf3.v}
	\Big(\int_{B_{r}}\bar{v}^{p_{0}}dx\Big)\Big(\int_{B_{r}}\bar{v}^{-p_{0}}dx\Big)
	\leq cr^{2N}.
	\end{equation}
	Finally, the estimate \eqref{inf1} follows from \eqref{estm3.v} with $p=p_{0}$ and \eqref{inf3.v}. 
\end{proof}
\begin{theorem}[Harnack-type inequalities] 
	Let $u\in W^{1}_{0}L_{M}(\Omega)$ be a bounded weak solution of \eqref{I} and let $B_{\frac{r}{2}}$, $0<r\leq1$, be a ball with  radius $\frac{r}{2}$. There exists a large constant $C>0$ such that 
	\begin{equation}\label{Harnack-v}
	\sup_{B_{\frac{r}{2}}}v\leq C(\inf_{B_{\frac{r}{2}}}v+r)
	\end{equation}
	and 
	\begin{equation}\label{Harnack-w}
	\sup_{B_{\frac{r}{2}}}w\leq C(\inf_{B_{\frac{r}{2}}}w+r).
	\end{equation}
\end{theorem}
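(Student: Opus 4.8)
The plan is to obtain \eqref{Harnack-v} by simply chaining the two preceding lemmas, and then to observe that $w=\sup_{B_{r}}u-u$ satisfies a weak formulation of exactly the same shape as the one for $v$, so that the identical argument yields \eqref{Harnack-w}.

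First I would apply the first of the two lemmas above with the exponent $p$ taken to be the constant $p_{0}$ furnished by the second lemma. This produces a constant $C_{1}>0$ with
\[
\sup_{B_{\frac{r}{2}}}v\leq C_{1}\bigg(\Big(r^{-N}\int_{B_{r}}v^{p_{0}}\,dx\Big)^{\frac{1}{p_{0}}}+r\bigg).
\]
The second lemma bounds the bracketed quantity by $C_{2}\big(\inf_{B_{\frac{r}{2}}}v+r\big)$ for some $C_{2}>0$, whence, using $r\geq 0$ and $\inf_{B_{\frac{r}{2}}}v\geq 0$,
\[
\sup_{B_{\frac{r}{2}}}v\leq C_{1}\big(C_{2}(\inf_{B_{\frac{r}{2}}}v+r)+r\big)\leq C_{1}(C_{2}+1)\big(\inf_{B_{\frac{r}{2}}}v+r\big),
\]
which is \eqref{Harnack-v} with $C=C_{1}(C_{2}+1)$.

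For \eqref{Harnack-w} I would first record that $w=\sup_{B_{r}}u-u$ solves a weak problem of the same type as \eqref{4.1}. Indeed $\nabla w=-\nabla u$, so $|\nabla w|=|\nabla u|$ and $\phi(|\nabla w|)\nabla w=-\phi(|\nabla u|)\nabla u$; substituting into \eqref{4} and writing $u=\sup_{B_{r}}u-w$ one gets
\[
\int_{\Omega}\phi(|\nabla w|)\nabla w\cdot\nabla\psi\,dx=-\lambda\int_{\Omega}\rho(x)\,\phi\big(|\sup_{B_{r}}u-w|\big)\big(\sup_{B_{r}}u-w\big)\psi\,dx
\]
for every $\psi\in W^{1}_{0}L_{M}(\Omega)$. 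Then I would rerun verbatim the proofs of the two lemmas above with $v$ replaced by $w$ and $\bar v=v+r$ replaced by $\bar w=w+r$: the left-hand sides are unchanged, since $\nabla\bar w=\nabla w$ and only the positivity coming from $\phi>0$ is used there; on the right-hand side the sign flip is harmless, because one only ever estimates $|\phi(|u|)u|=|m(u)|\leq m(\|u\|_{\infty})\leq m(\bar w+\|u\|_{\infty})$, exactly as was done for $v$ through $v+\inf_{B_{r}}u\leq\bar v+\|u\|_{\infty}$. This delivers the $w$-analogues of \eqref{sup1} and \eqref{inf1}, and chaining them as in the previous paragraph gives \eqref{Harnack-w}.

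The only genuine point to verify — and where a hasty reading could go wrong — is precisely that the change of sign in the source term of the equation for $w$ does not spoil either lemma. But since all the relevant inequalities were obtained from the monotonicity-free bound $m(t)\leq m(\bar w+\|u\|_{\infty})$ together with $\bar w>0$ and $M(\bar w)>0$, every estimate passes through unchanged, the constants depending only on $N$, $p_{0}$, $\lambda$, $\|\rho\|_{\infty}$, $\|u\|_{\infty}$ and $r$. Enlarging $C$ if necessary so that one single constant serves for both \eqref{Harnack-v} and \eqref{Harnack-w} completes the proof.
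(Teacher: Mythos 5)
Your argument is correct and is the paper's argument: the paper's proof of this theorem is just the one-line chaining of the two preceding lemmas with $p=p_0$, followed by the remark that the same two estimates hold for $w$. Your verification that the sign flip in the source term for $w$ is harmless (since only $|m(u)|\leq m(\bar w+\|u\|_\infty)$ and $\bar w>0$ are used) is a correct and useful fleshing-out of the paper's terse ``in the same way as above.''
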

\begin{proof}
	Putting together \eqref{sup1}, with the choice $p=p_{0}$, and \eqref{inf1} we immediately get \eqref{Harnack-v}. In the same way as above, one can obtain analogous inequalities to (\ref{inf1}) and (\ref{sup1}) for $w$ obtaining the inequality \eqref{Harnack-w}.
\end{proof}
\par We are now ready to prove the following H\"older estimate for weak solutions of \eqref{I}.
\begin{theorem}[H\"older regularity]\label{Holder}
	Let $u\in W^{1}_{0}L_{M}(\Omega)$ be a bounded weak solution of \eqref{I}. Then there exist two constants $0<\alpha<1$ and $C>0$ such that if $B_r$ and $B_R$ are two concentric balls of radii $0<r\leq R\leq1$, then
	$$
	\mbox{osc}_{B_{r}}u\leq C\Big(\frac{r}{R}\Big)^{\alpha}\Big(\sup_{B_{R}}|u|+C(R)\Big),
	$$
	where $\mbox{osc}_{B_{r}}u=\sup_{B_r}u-\inf_{B_{r}}u$ and $C(R)$ is a positive constant which depends on $R.$
\end{theorem}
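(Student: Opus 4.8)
The plan is to deduce the H\"older estimate from the two Harnack-type inequalities \eqref{Harnack-v} and \eqref{Harnack-w} by the classical oscillation-reduction scheme. For $0<\rho\le R$ write $S(\rho)=\sup_{B_\rho}u$, $I(\rho)=\inf_{B_\rho}u$ and $\omega(\rho)=\mbox{osc}_{B_\rho}u=S(\rho)-I(\rho)$. First I would note that on $B_r\supset B_{r/2}$ we have $v=u-I(r)$ and $w=S(r)-u$, so that $\sup_{B_{r/2}}v=S(r/2)-I(r)$, $\inf_{B_{r/2}}v=I(r/2)-I(r)$, $\sup_{B_{r/2}}w=S(r)-I(r/2)$ and $\inf_{B_{r/2}}w=S(r)-S(r/2)$; moreover $v,w\ge 0$ on $B_{r/2}$, so \eqref{Harnack-v} and \eqref{Harnack-w} apply and give
\begin{equation*}
S(r/2)-I(r)\le C\big(I(r/2)-I(r)+r\big),\qquad S(r)-I(r/2)\le C\big(S(r)-S(r/2)+r\big).
\end{equation*}

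Adding these two inequalities, the left-hand side equals $\omega(r/2)+\omega(r)$ and the right-hand side equals $C\big(\omega(r)-\omega(r/2)\big)+2Cr$, so after rearranging
\begin{equation*}
\omega(r/2)\le\gamma\,\omega(r)+\kappa r,\qquad\gamma=\frac{C-1}{C+1},\quad\kappa=\frac{2C}{C+1},
\end{equation*}
for every $0<r\le1$. Since \eqref{Harnack-v} and \eqref{Harnack-w} hold a fortiori with any larger constant, we may enlarge $C$ so that $C>3$, which forces $\gamma\in(1/2,1)$. Next I would iterate this relation along $r_k=R2^{-k}$: a straightforward induction gives
\begin{equation*}
\omega(r_k)\le\gamma^k\omega(R)+\kappa R\sum_{j=0}^{k-1}\gamma^{\,k-1-j}2^{-j}\le\gamma^k\Big(\omega(R)+\frac{2\kappa}{2\gamma-1}\,R\Big),
\end{equation*}
where the last bound uses $2\gamma>1$ to sum the series. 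For arbitrary $r\in(0,R]$ pick $k$ with $r_{k+1}<r\le r_k$; then $2^k>\tfrac12(R/r)$, hence $\gamma^k\le\gamma^{-1}(r/R)^\alpha$ with $\alpha=\log_2(1/\gamma)\in(0,1)$, and so $\omega(r)\le\omega(r_k)\le c_1(r/R)^\alpha\big(\omega(R)+R\big)$. Since $\omega(R)\le2\sup_{B_R}|u|$, taking $C=2c_1$ and $C(R)=R$ yields $\mbox{osc}_{B_r}u\le C(r/R)^\alpha\big(\sup_{B_R}|u|+C(R)\big)$, which is the asserted estimate.

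Once \eqref{Harnack-v}--\eqref{Harnack-w} are in hand the rest is essentially bookkeeping; the points deserving care are (i) that a single constant $C$ can serve in both Harnack inequalities and can be taken $>3$, which is what makes $\gamma\in(1/2,1)$, and (ii) the handling of the inhomogeneous term $\kappa r$ in the iteration. Item (ii) is the standard decay lemma for nonnegative nondecreasing functions, and it is precisely the condition $\gamma>1/2$ that keeps the resulting H\"older exponent $\alpha=\log_2(1/\gamma)$ strictly below $1$; the additive, $R$-dependent term $C(R)$ simply absorbs the contribution of this inhomogeneity. I do not anticipate any genuine obstacle beyond these routine points, the whole difficulty having already been carried by the Harnack-type inequalities.
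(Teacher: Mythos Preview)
Your argument is correct and follows essentially the same oscillation-reduction scheme as the paper: apply the two Harnack inequalities to $v$ and $w$, add, obtain a decay relation $\omega(r/2)\le\gamma\,\omega(r)+\text{(lower order)}$, then iterate dyadically. The only cosmetic difference is in the bookkeeping of the iteration: you enlarge $C$ so that $\gamma=(C-1)/(C+1)>1/2$ and sum the geometric series directly, while the paper (with $\gamma=(C-1)/C$) uses the intermediate-scale trick $R_1=R^{1-\alpha}r^{\alpha}$ to absorb the inhomogeneous term --- both are standard and yield the same conclusion.
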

\begin{proof} 
	From \eqref{Harnack-v} and \eqref{Harnack-w} we obtain
	$$
	\sup_{B_{\frac{r}{2}}}u-\inf_{B_{r}}u=\sup_{B_{\frac{r}{2}}}v\leq C(\inf_{B_{\frac{r}{2}}}v+r)=
	C(\inf_{B_{\frac{r}{2}}}u-\inf_{B_r}u+r)
	$$
	and
	$$
	\begin{array}{clc}
	\sup_{B_{r}}u-\sup_{B_{\frac{r}{2}}}u=\sup_{B_{\frac{r}{2}}}w\leq C(\inf_{B_{\frac{r}{2}}}w+r)
	\leq C(\sup_{B_{\frac{r}{2}}}w+r)
	=C(\sup_{B_{r}}u-\sup_{B_{\frac{r}{2}}}u+r).
	\end{array}
	$$
	Hence, summing up both the two first terms in the left-hand side and the two last terms in the right-hand side of the above inequalities,  we obtain
	$$
	\sup_{B_{r}}u-\inf_{B_{r}}u\leq
	C\Big(\sup_{B_{r}}u-\inf_{B_{r}}u+\inf_{B_{\frac{r}{2}}}u-\sup_{B_{\frac{r}{2}}}u+2r\Big),
	$$
	that is to say, what one still writes
	\begin{equation}\label{osc2}
	\mbox{osc}_{B_{\frac{r}{2}}}u\leq\Big(\frac{C-1}{C}\Big)\mbox{osc}_{B_{r}}u+2r.
	\end{equation}
	Let us fix  some real number $R_{1}\leq R$ and define
	$\sigma(r)=\mbox{osc}_{B_{r}}u$. Let $n\in\mathbb{N}$ be an integer. Iterating the inequality \eqref{osc2} by substituting $r=R_{1}$, $r=\frac{R_{1}}{2}$, $\cdots$, $r=\frac{R_{1}}{2^n}$,  we obtain
	$$
	\begin{array}{clc}
	\sigma\Big(\frac{R_{1}}{2^n}\Big)&\leq \gamma^{n}\sigma(R_1)+R_1\sum_{i=0}^{n-1}\frac{\gamma^{n-1-i}}{2^{i-1}}\\
	&\leq \gamma^{n}\sigma(R)+\frac{R_1}{1-\gamma},
	\end{array}
	$$
	where $\gamma=\frac{C-1}{C}.$ For any $r\leq R_1$, there exists an integer $n$ satisfying
	$$
	2^{-n-1}R_1\leq r<2^{-n}R_1.
	$$
	Since $\sigma$ is an increasing function, we get
	$$
	\sigma(r)\leq\gamma^{n}\sigma(R)+\frac{R_1}{1-\gamma}.
	$$
	Being $\gamma<1$, we can write
	$$
	\begin{array}{clc}
	\gamma^{n}\leq\gamma^{-1}\gamma^{-\frac{\log(\frac{r}{R_1})}{\log2}}=\gamma^{-1}\Big(\frac{r}{R_1}\Big)^{-\frac{\log\gamma}{\log2}}.
	\end{array}
	$$
	Therefore,
	$$
	\sigma(r)\leq\gamma^{-1}\Big(\frac{r}{R_1}\Big)^{-\frac{\log\gamma}{\log2}}\sigma(R)+\frac{R_1}{1-\gamma}.
	$$
	This inequality holds for arbitrary $R_1$ such that $r\leq R_1\leq R$.
	Let now $\alpha\in(0,1)$ and $R_1=R^{1-\alpha}r^{\alpha}$, so that we have from the preceding
	$$
	\sigma(r)\leq\gamma^{-1}\Big(\frac{r}{R}\Big)^{-(1-\alpha)\frac{\log\gamma}{\log2}}\sigma(R)+\frac{R}{1-\gamma}\Big(\frac{r}{R}\Big)^{\alpha}.
	$$
	Thus, the desired result follows by choosing $\alpha$ such that $\alpha=-(1-\alpha)\frac{\log\gamma}{\log2}$, that is $\alpha=\frac{-\log\gamma}{\log2-\log\gamma}$.
\end{proof}



\begin{thebibliography}{99}
	\bibitem{Adams}
	\newblock R. A. Adams and J. J. F. Fournier,
	\newblock Sobolev spaces,
	\newblock Pure and Applied Mathematics (Amsterdam).
	\newblock 140
	\newblock Second.
	\newblock Elsevier/Academic Press, Amsterdam,
	\newblock 2003.
	
	
	\bibitem{Anane} 
	\newblock A. Anane,
	\newblock Simplicit\'e et isolation de la premi\`ere valeur propre du
	{$p$}-laplacien avec poids,
	\newblock C. R. Acad. Sci. Paris S\'er. I Math. S\'erie
	I. Math\'ematique,
	\newblock 305,
	\newblock 1987.
	\newblock 16,
	\newblock 725--728.
	
	
	\bibitem{Brezis}
	\newblock H. Brezis,
	\newblock Functional analysis, {S}obolev spaces and partial differential
	equations,
	\newblock Universitext,
	\newblock Springer, New York,
	\newblock 2011.
	
	
	
	\bibitem{Ekeland}  
	\newblock I. Ekeland and R. Temam,
	\newblock Convex analysis and variational problems,
	\newblock Translated from the French,
	Studies in Mathematics and its Applications, Vol. 1,
	\newblock North-Holland Publishing Co., Amsterdam-Oxford; American
	Elsevier Publishing Co., Inc., New York,
	\newblock 1976.
	
	
	
	\bibitem{Garcia}   
	\newblock M. Garc\'ia-Huidobro and V. K. Le and R. Man\'asevich and
	K. Schmitt,
	\newblock On principal eigenvalues for quasilinear elliptic differential
	operators: an {O}rlicz-{S}obolev space setting,
	\newblock NoDEA Nonlinear Differential Equations Appl.
	\newblock 6,
	\newblock 1999.
	\newblock 2,
	\newblock 207--225.
	
	
	
	\bibitem{Gossez} 
	\newblock J. P. Gossez,
	\newblock Nonlinear elliptic boundary value problems for equations with
	rapidly (or slowly) increasing coefficients,
	\newblock Trans. Amer. Math. Soc,
	\newblock 190,
	\newblock 1974,
	\newblock 163--205.
	
	
	
	\bibitem{Gossez82} 
	\newblock J. P. Gossez,
	\newblock Some approximation properties in {O}rlicz-{S}obolev spaces,
	\newblock  Studia Math,
	\newblock  74,
	\newblock 1982,
	\newblock 1,
	\newblock 17--24.
	
	
	
	\bibitem{Gossez86} 
	\newblock J. P. Gossez,
	\newblock A strongly nonlinear elliptic problem in {O}rlicz-{S}obolev
	spaces,
	\newblock Nonlinear functional analysis and its applications, {P}art 1
	({B}erkeley, {C}alif., 1983),
	\newblock Proc. Sympos. Pure Math,
	\newblock 45,
	\newblock 455--462,
	\newblock Amer. Math. Soc., Providence, RI,
	\newblock 1986.
	
	
	
	\bibitem{Manasevich} 
	\newblock J. P. Gossez and R. Man\'asevich,
	\newblock On a nonlinear eigenvalue problem in {O}rlicz-{S}obolev
	spaces,
	\newblock Proc. Roy. Soc. Edinburgh Sect. A,
	\newblock 132,
	\newblock 2002,
	\newblock 4,
	\newblock 891--909.
	
	
	
	\bibitem{Kras}  
	\newblock M. A. Krasnosel'skii and J. B. Rutic'kii,
	\newblock Convex functions and {O}rlicz spaces,
	\newblock Translated from the first Russian edition by Leo F. Boron,
	\newblock P. Noordhoff Ltd., Groningen,
	\newblock 1961.
	
	
	
	\bibitem{Lindqvist}  
	\newblock P. Lindqvist,
	\newblock On the equation ${\rm div}\,(|\nabla u|^{p-2}\nabla
	u)+\lambda|u|^{p-2}u=0$,
	\newblock Proc. Amer. Math. Soc,
	\newblock 109,
	\newblock 1990,
	\newblock 1,
	\newblock 157--164.
	
	
	
	\bibitem{Lorca}
	\newblock M. Montenegro and S. Lorca,
	\newblock The eigenvalue problem for quasilinear elliptic operators with
	general growth,
	\newblock Appl. Math. Lett,
	\newblock 25,
	\newblock 2012,
	\newblock 7,
	\newblock 1045--1049.
	
	
	
	
	
	\bibitem{MRR} 
	\newblock M. Mih\u{a}ilescu, V. R\u{a}dulescu and D. Repov\v{s},
	\newblock On a non-homogeneous eigenvalue problem involving a potential:
	an Orlicz-Sobolev space setting,
	\newblock J. Math. Pures Appl. (9),
	\newblock 93,
	\newblock 2010,
	\newblock 2,
	\newblock 132--148.
	
	
	\bibitem{MR} 
	\newblock M. Mih\u{a}ilescu, and V. R\u{a}dulescu,
	\newblock Eigenvalue problems associated with nonhomogeneous
	differential operators in {O}rlicz-{S}obolev spaces,
	\newblock Anal. Appl. (Singap.),
	\newblock 6,
	\newblock 2008,
	\newblock 1,
	\newblock 83--98.
	
	
	\bibitem{Mustonen} 
	\newblock V. Mustonen and M. Tienari,
	\newblock An eigenvalue problem for generalized {L}aplacian in
	{O}rlicz-{S}obolev spaces,
	\newblock Proc. Roy. Soc. Edinburgh Sect. A,
	\newblock 129,
	\newblock 1999,
	\newblock 1,
	\newblock 153--163.
	
	
	
	\bibitem{Otani} 
	\newblock M. \^Otani and T. Teshima,
	\newblock On the first eigenvalue of some quasilinear elliptic
	equations,
	\newblock Proc. Japan Acad. Ser. A Math. Sci.,
	\newblock 64,
	\newblock 1988,
	\newblock 1,
	\newblock 8--10,
	
	
	
	
	\bibitem{Pucci}
	\newblock P. Pucci and J. Serrin,
	\newblock A note on the strong maximum principle for elliptic
	differential inequalities,
	\newblock J. Math. Pures Appl. (9),
	\newblock 79,
	\newblock 2000,
	\newblock 1,
	\newblock 57--71.
	
	
	
	\bibitem{Sakaguchi}
	\newblock S. Sakaguchi,
	\newblock Concavity properties of solutions to some degenerate
	quasilinear elliptic {D}irichlet problems,
	\newblock Ann. Scuola Norm. Sup. Pisa Cl. Sci. (4),
	\newblock 14,
	\newblock 1987,
	\newblock 3,
	\newblock 403--421 (1988),
	
	
	
	
	\bibitem{stamp65}
	\newblock G. Stampacchia,
	\newblock Le probl\`eme de {D}irichlet pour les \'equations elliptiques du
	second ordre \`a coefficients discontinus,
	\newblock Ann. Inst. Fourier (Grenoble),
	\newblock 15,
	\newblock 1965,
	\newblock fasc. 1,
	\newblock 189--258,
	
	
	
	\bibitem{Trudinger67} 
	\newblock N. S. Trudinger,
	\newblock On {H}arnack type inequalities and their application to
	quasilinear elliptic equations,
	\newblock Comm. Pure Appl. Math.,
	\newblock 1967,
	\newblock 721--747.
	
\end{thebibliography}

\end{document}